\newtheorem{theo}{Theorem}[section]
\newtheorem{lemm}[theo]{Lemma}
\newtheorem{prop}[theo]{Proposition}
\newtheorem{rem}{Remark}[section]
\numberwithin{equation}{section}
\newsavebox{\boxI}
\sbox{\boxI}{\begin{tikzpicture}
\coordinate (A1) at (0,0);
\coordinate (A2) at ($(0,0.2)$);
\fill (A2) circle (1pt);
\draw (A1)--(A2);
\end{tikzpicture}}
\newcommand{\I}{{\usebox{\boxI}}}
\newsavebox{\boxY}
\sbox{\boxY}{\begin{tikzpicture}
\coordinate (A1) at (0,0);
\coordinate (A2) at ($0.6*(0.1,0.2)$);
\coordinate (A3) at ($0.6*(-0.1,0.2)$);
\coordinate (A4) at ($0.6*(0,-0.2)$);
\foreach \n in {2, 3} \fill (A\n) circle (0.9pt);
\draw (A3)--(A1)--(A2);
\draw (A1)--(A4);
\end{tikzpicture}}
\newcommand{\Y}{{\usebox{\boxY}}}
\newsavebox{\boxV}
\sbox{\boxV}{\begin{tikzpicture}
\coordinate (A1) at (0,0);
\coordinate (A2) at ($0.9*(0.1,0.2)$);
\coordinate (A3) at ($0.9*(-0.1,0.2)$);
\foreach \n in {2, 3} \fill (A\n) circle (1pt);
\draw (A3)--(A1)--(A2);
\end{tikzpicture}}
\newcommand{\V}{{\usebox{\boxV}}}
\newsavebox{\boxW}
\sbox{\boxW}{\begin{tikzpicture}
\coordinate (A1) at (0,0);
\coordinate (A2) at ($0.5*(-0.1,0.2)$);
\coordinate (A3) at ($0.5*(0.1,0.2)$);
\coordinate (A4) at ($0.5*(-0.2,0.4)$);
\coordinate (A5) at ($0.5*(0,0.4)$);
\coordinate (A6) at ($0.5*(0,-0.2)$);
\foreach \n in {3,4,5} \fill (A\n) circle (0.8pt);
\draw (A2)--(A1)--(A3);
\draw (A4)--(A2)--(A5);
\draw (A6)--(A1);
\end{tikzpicture}}
\newcommand{\W}{{\usebox{\boxW}}}
\newsavebox{\boxWc}
\sbox{\boxWc}{\begin{tikzpicture}
\coordinate (A1) at (0,0);
\coordinate (A2) at ($0.5*(-0.1,0.2)$);
\coordinate (A3) at ($0.5*(0.1,0.2)$);
\coordinate (A4) at ($0.5*(-0.2,0.4)$);
\coordinate (A5) at ($0.5*(0,0.4)$);
\foreach \n in {3,4,5} \fill (A\n) circle (0.9pt);
\draw (A2)--(A1)--(A3);
\draw (A4)--(A2)--(A5);
\draw (A3) to [out=90,in=0] (A5);
\end{tikzpicture}}
\newcommand{\Wc}{{\usebox{\boxWc}}}
\newsavebox{\boxIWc}
\sbox{\boxIWc}{\begin{tikzpicture}
\coordinate (A1) at (0,0);
\coordinate (A2) at ($0.5*(-0.1,0.2)$);
\coordinate (A3) at ($0.5*(0.1,0.2)$);
\coordinate (A4) at ($0.5*(-0.2,0.4)$);
\coordinate (A5) at ($0.5*(0,0.4)$);
\coordinate (A6) at ($0.5*(0,-0.2)$);
\foreach \n in {3,4,5} \fill (A\n) circle (0.9pt);
\draw (A2)--(A1)--(A3);
\draw (A4)--(A2)--(A5);
\draw (A6)--(A1);
\draw (A3) to [out=90,in=0] (A5);
\end{tikzpicture}}
\newcommand{\IWc}{{\usebox{\boxIWc}}}
\newsavebox{\boxK}
\sbox{\boxK}{\begin{tikzpicture}
\coordinate (A1) at (0,0);
\coordinate (A2) at ($1*(-0.1,0.1)$);
\coordinate (A3) at ($1*(0,0.2)$);
\fill (A3) circle (1pt);
\draw (A1)--(A2)--(A3);
\end{tikzpicture}}
\newcommand{\K}{{\usebox{\boxK}}}
\newsavebox{\boxB}
\sbox{\boxB}{\begin{tikzpicture}
\coordinate (A1) at (0,0);
\coordinate (A2) at ($0.6*(-0.2,0.2)$);
\coordinate (A3) at ($0.6*(0.2,0.2)$);
\coordinate (A4) at ($0.6*(-0.3,0.4)$);
\coordinate (A5) at ($0.6*(-0.1,0.4)$);
\coordinate (A6) at ($0.6*(0.1,0.4)$);
\coordinate (A7) at ($0.6*(0.3,0.4)$);
\foreach \n in {4,5,6,7} \fill (A\n) circle (0.9pt);
\draw (A2)--(A1)--(A3);
\draw (A4)--(A2)--(A5);
\draw (A6)--(A3)--(A7);
\end{tikzpicture}}
\newcommand{\Bo}{{\usebox{\boxB}}}
\newsavebox{\boxD}
\sbox{\boxD}{\begin{tikzpicture}
\coordinate (A1) at (0,0);
\coordinate (A2) at ($0.4*(-0.1,0.2)$);
\coordinate (A3) at ($0.4*(0.1,0.2)$);
\coordinate (A4) at ($0.4*(-0.2,0.4)$);
\coordinate (A5) at ($0.4*(0,0.4)$);
\coordinate (A6) at ($0.4*(-0.3,0.6)$);
\coordinate (A7) at ($0.4*(-0.1,0.6)$);
\foreach \n in {3,5,6,7} \fill (A\n) circle (0.7pt);
\draw (A2)--(A1)--(A3);
\draw (A4)--(A2)--(A5);
\draw (A6)--(A4)--(A7);
\draw (A1) [fill=white] circle (1pt);
\end{tikzpicture}}
\newcommand{\D}{{\usebox{\boxD}}}
\newsavebox{\boxC}
\sbox{\boxC}{\begin{tikzpicture}
\coordinate (A1) at (0,0);
\coordinate (A2) at ($1*(0.1,0.1)$);
\coordinate (A3) at ($1*(-0.1,0.1)$);
\coordinate (A4) at ($1*(0,0.2)$);
\foreach \n in {2, 4} \fill (A\n) circle (1pt);
\draw (A4)--(A3)--(A1)--(A2);
\draw (A1) [fill=white] circle (1pt);
\end{tikzpicture}}
\newcommand{\Co}{{\usebox{\boxC}}}
\renewcommand{\a}{\alpha}
\renewcommand{\b}{\beta}
\newcommand{\e}{\varepsilon}
\newcommand{\de}{\delta}
\newcommand{\fa}{\varphi}
\newcommand{\ga}{\gamma}
\renewcommand{\k}{\kappa}
\newcommand{\la}{\lambda}
\renewcommand{\th}{\theta}
\newcommand{\si}{\sigma}
\renewcommand{\t}{\tau}
\newcommand{\Ga}{\Gamma}
\def\${|\!|\!|}
\newcommand{\rs}{\varodot}
\newcommand{\pl}{\varolessthan}
\newcommand{\pr}{\varogreaterthan}
\def\R{{\mathbb{R}}}
\def\N{{\mathbb{N}}}
\def\Z{{\mathbb{Z}}}
\def\T{{\mathbb{T}}}
\def\bn{\mathbf{n}}
\def\bm{\mathbf{m}}
\newcommand{\MARU}[1]{{\ooalign{\hfil#1\/\hfil\crcr
     \raise.167ex\hbox{\mathhexbox20D}}}}
\begin{document}

\title{A coupled KPZ equation, its two types of approximations and existence of global solutions}
\author{Tadahisa Funaki and Masato Hoshino}
\date{\today}
\maketitle

\begin{abstract}
\noindent
This paper concerns the multi-component coupled Kardar-Parisi-Zhang (KPZ) equation and its two types of approximations.  One approximation is obtained as a simple replacement of the noise term by a smeared noise with a proper renormalization, while the other one introduced in \cite{F15} is suitable for studying the invariant measures.  
By applying the paracontrolled calculus introduced by Gubinelli et al.\ \cite{GIP, GP}, 
we show that two approximations have the common limit under the properly adjusted choice of
renormalization factors for each of these approximations.
In particular, if the coupling constants of the nonlinear term of the coupled KPZ equation satisfy the so-called \lq\lq trilinear"
condition, the renormalization factors can be taken the same in two approximations and the difference
of the limits of two approximations are explicitly computed.  Moreover, under the trilinear condition, the Wiener measure
twisted by the diffusion matrix becomes stationary for the limit and we show that the solution of
the limit equation exists globally in time when the initial value is sampled from the stationary measure.
This is shown for the associated tilt process.
Combined with the strong Feller property shown by Hairer and Mattingly \cite{HM16}, this result can be extended for all initial values.
\footnote{
\hskip -6mm
Graduate School of Mathematical Sciences,
The University of Tokyo, Komaba, Tokyo 153-8914, Japan.
e-mails: funaki@ms.u-tokyo.ac.jp, hoshino@ms.u-tokyo.ac.jp}
\footnote{
\hskip -6mm
\textit{Keywords: Stochastic partial differential equation,
KPZ equation, Paracontrolled calculus, Renormalization.}}
\footnote{
\hskip -6mm
\textit{Abbreviated title $($running head$)$: Coupled KPZ equation.}}
\footnote{
\hskip -6mm
\textit{2010 MSC: 60H15, 82C28.}}
\footnote{
\hskip -7mm
\textit{
The first author is supported in part by the JSPS KAKENHI Grant Numbers $($S$)$ 24224004, $($S$)$ 16H06338, $($B$)$ 26287014 and 26610019. The second author is supported by JSPS KAKENHI, Grant-in-Aid for JSPS Fellows, 16J03010.
}}
\end{abstract}


\section{Introduction and main results}

\subsection{Coupled KPZ equation}

We consider the following $\R^d$-valued coupled KPZ equation for $h(t,x)=(h^\a(t,x))_{\a=1}^d$ defined on the one dimensional torus $\T \equiv \R/\Z = [0,1)$:
\begin{equation}  \label{3_eq:KPZ}
\partial_t h^\a = \tfrac12 \partial_x^2 h^\a + \tfrac12 \Ga_{\b\ga}^\a\partial_x h^\b \partial_x h^\ga + \si^\a_\b \xi^\b, \quad x \in \T,
\end{equation}
for $1 \le \a \le d$. Here summation symbols $\sum$ over $\b$ and $\gamma$ are omitted by Einstein's convention. $(\si_\b^\a)_{1\le \a, \b\le d}$ and $(\Ga_{\b\ga}^\a)_{1\le \a,\b,\ga\le d}$ are given constants, and $\xi(t,x)= (\xi^\a(t,x))_{\a=1}^d$ is an $\R^d$-valued space-time Gaussian white noise.  In particular, it has the covariance structure
$$
E[\xi^\a(t,x)\xi^\b(s,y)] = \de^{\a\b}\de(x-y)\de(t-s),
$$
where $\de^{\a\b}$ denotes Kronecker's $\de$. We always assume that the coupling
constants $\Ga_{\b\ga}^\a$ satisfy 
\begin{equation} \label{3_eq:1.2}
\Ga_{\b\ga}^\a= \Ga_{\ga\b}^\a
\end{equation}
for all $\a, \b, \ga$, and the diffusion matrix $\si=(\si_\b^\a)_{1\le \a, \b\le d}$ is invertible.
The symmetry or bilinearity \eqref{3_eq:1.2} of $\Ga^\a = (\Ga^\a_{\b\ga})_{\b\ga}$ for each $\a$ is natural
due to the form of the equation \eqref{3_eq:KPZ}.

One of the motivations to study the coupled KPZ equation \eqref{3_eq:KPZ} comes from the nonlinear fluctuating hydrodynamics recently discussed by Spohn and others \cite{FSS,S14,SS15}, whose origin goes back to Landau. From microscopic systems with random evolutions, in a proper space-time scaling, one can derive certain nonlinear partial differential equations (PDEs) as a result of  a local average due to the local ergodicity. This procedure is called the hydrodynamic limit. If the system has $d$ (local) conserved quantities, we have a system of $d$ coupled nonlinear PDEs in the limit. The noises in the microscopic systems are averaged out and disappear in the macroscopic limit equations. However, if we consider a linearization of this system around a global equilibrium, the noise terms survive in a proper scaling and we obtain linear stochastic PDEs (SPDEs) in the limit. At least heuristically, 
if the system involves a weak asymmetry and if we expand the equation to the second order, one can expect to obtain the coupled KPZ equations in the limit in a proper scaling. If some of $\Ga^\a_{\b\ga}$ are degenerate, then the solution involves different scalings
such as diffusive, KPZ or  (anomalous) L\'evy type scalings. 

\subsection{Two approximating equations}

The coupled KPZ equation \eqref{3_eq:KPZ} itself is ill-posed, so that we need to introduce its approximations;
see \cite{FQ} for a scalar-valued KPZ equation. A simple approximation of \eqref{3_eq:KPZ} is defined as follows. Let $\eta \in C_0^\infty(\R)$ be a function satisfying $\eta(x) = \eta(-x)$ and $\int_\R \eta(x)dx=1$; note that $\eta$ may not be non-negative.
We set $\eta^\e(x) = \eta(x/\e)/\e$ for $\e>0$ and consider the $\R^d$-valued KPZ approximating equation for $h= h^\e(t,x) \equiv (h^{\e,\a}(t,x))_{\a=1}^d$ with a smeared noise and a proper renormalization:
\begin{equation}  \label{3_eq:1-2}
\partial_t h^{\e,\a} = \tfrac12 \partial_x^2 h^{\e,\a} + \tfrac12 \Ga_{\b\ga}^\a
(\partial_x h^{\e,\b} \partial_x h^{\e,\ga} -c^\e A^{\b\ga}-B^{\e,\b\ga})
+ \si_\b^\a\xi^{\b}*\eta^\e,
\end{equation}
for $1 \le \a \le d$, where $A^{\b\ga}=\sum_{\de=1}^d \si_\de^\b \si_\de^\ga$, $c^\e = \frac1\e \|\eta\|_{L^2(\R)}^2$ and $B^{\e,\b\ga}$ is a renormalization factor defined in Section \ref{3_section 4}, which diverges as $O(\log\e^{-1})$ as $\e\downarrow 0$ in general. We consider $\e>0$ small enough, so that the support of $\eta^\e$ is in the interval $(-1/2,1/2)$.

Another approximation of \eqref{3_eq:KPZ} suitable for studying invariant measures is introduced as follows. Let $\eta_2(x) = \eta*\eta(x)$, $\eta_2^\e(x) =\eta_2(x/\e)/\e$ and consider the following $\R^d$-valued equation for $\tilde h= \tilde h^\e(t,x) \equiv (\tilde h^{\e,\a}(t,x))_{\a=1}^d$ with a smeared noise and a proper renormalization:
\begin{equation}  \label{3_eq:1}
\partial_t \tilde h^{\e,\a} = \tfrac12 \partial_x^2\tilde  h^{\e,\a} + \tfrac12 \Ga_{\b\ga}^\a (\partial_x \tilde h^{\e,\b} \partial_x \tilde h^{\e,\ga} -c^\e A^{\b\ga}-\tilde B^{\e,\b\ga})* \eta_2^\e
+ \si_\b^\a\xi^{\b}*\eta^\e,
\end{equation}
for $1 \le \a \le d$, where $\tilde B^{\e,\b\ga}$
is a renormalization factor defined in Section \ref{3_section 4}, which diverges as $O(\log\e^{-1})$ as $\e\downarrow 0$ in general. 
We assume that the support of $\eta_2^\e$ is in $(-1/2,1/2)$.
The difference of \eqref{3_eq:1} from \eqref{3_eq:1-2} is that it has a convolution factor $*\eta_2^\e$ in the nonlinear term.  

In \cite{F15}, assuming that $\si$ is an identity matrix $I$, under the additional assumption,
which we call the trilinear condition, on $\Ga$:
\begin{equation} \label{3_eq:1.2-s}
\Ga_{\b\ga}^\a=\Ga_{\ga\b}^\a= \Ga_{\ga\a}^\b,
\end{equation}
for all $\a,\b,\ga$, the infinitesimal invariance of the smeared Wiener measure for the tilt process
$u^\epsilon=\partial_x\tilde h^\epsilon$ of the solution $\tilde h^\epsilon$ of 
\eqref{3_eq:1} with $\tilde B^{\e,\b\ga}=0$ is shown (actually on $\R$ instead of $\T$). Namely, let
 $(B_x)_{x\in\T} = \big((B_x^\a)_{\a=1}^d\big)_{x\in\T}$ be the $d$-dimensional periodic Brownian motion 
such that $B_0 =B_1=0$ a.s. Then the distribution of $\partial_x(B*\eta^\e)$ is infinitesimally invariant
for $u^\epsilon=\partial_x\tilde h^\epsilon$ determined from \eqref{3_eq:1} with $\si=I$ and $\tilde B^{\e,\b\ga}=0$.

This result can be easily extended to our general setting with $\si$. Indeed, let $\tilde h^\e=(\tilde h^{\e,\a})$ be the solution of \eqref{3_eq:1} and set $\hat h^{\e,\a} := \t_\b^\a \tilde h^{\e,\b}$, where $\t = (\t_\b^{\a})$ is the inverse matrix of $\si$. Then, we easily see that $\hat h^\e = (\hat h^{\e,\a})$ is a solution of \eqref{3_eq:1} with $(\si_\b^\a\xi^\b*\eta^\e,A^{\b\ga}, \tilde B^{\e,\b\ga}, \Ga_{\b\ga}^\a)$ replaced by $(\xi^\a*\eta^\e,\de^{\b\ga},
\t_{\b'}^\b\t_{\ga'}^\ga\tilde B^{\e,\b'\ga'}, \hat\Ga_{\b\ga}^\a)$, where
\begin{equation}  \label{3_eq:1.hatGa}
\hat\Ga_{\b\ga}^\a = \t_{\a'}^\a \Ga_{\b'\ga'}^{\a'} \si_\b^{\b'}
\si_\ga^{\ga'},
\end{equation}
which arises from $\Ga$ under the change of variables $\hat h = \tau \tilde h$. In fact, $\Ga$ is a tensor of type (1,2) and $\hat{\Ga}$ defined by \eqref{3_eq:1.hatGa} is its transform under the change of basis.
Note that the bilinearity \eqref{3_eq:1.2}: $\hat\Ga_{\b\ga}^\a= \hat\Ga_{\ga\b}^\a$ automatically holds. Therefore, if $\hat\Ga$ determined from $\Ga$ as in \eqref{3_eq:1.hatGa} satisfies the trilinear condition:
\begin{equation} \label{3_eq:1.2-s-hat}
\hat\Ga_{\b\ga}^\a=\hat\Ga_{\ga\b}^\a= \hat\Ga_{\ga\a}^\b,
\end{equation}
for all $\a,\b,\ga$, then the distribution of the derivative of
the $d$-dimensional periodic and smeared Brownian motion $\big( \partial_x (\si B*\eta^\e)\big)_{x\in \T}
=\big((\partial_x \si_\b^\a B^\b*\eta^\e(x))_{\a=1}^d\big)_{x\in\T}$ multiplied by $\si$ is infinitesimally invariant 
for the tilt process $u=\partial_x\tilde h$ of the solution $\tilde h$ of \eqref{3_eq:1} with $\tilde B^{\e,\b\ga}=0$.

When $d=1$ and $\Ga_{\b\ga}^\a = \si_\b^\a =1$ for simplicity, the approximating equations \eqref{3_eq:1-2} with $B^{\e,\b\ga}=0$ and \eqref{3_eq:1} with $\tilde B^{\e,\b\ga}=0$ have the forms:
\begin{equation}  \label{3_eq:1-3}
\partial_t h = \tfrac12 \partial_x^2 h + \tfrac12 
\big((\partial_x h)^2 -c^\e \big)
+ \xi*\eta^\e,
\end{equation}
and
\begin{equation}  \label{3_eq:1-4}
\partial_t \tilde h = \tfrac12 \partial_x^2 \tilde h + \tfrac12 
\big((\partial_x \tilde h)^2 -c^\e \big)* \eta_2^\e
+ \xi*\eta^\e,
\end{equation}
respectively. It is shown that the solution of \eqref{3_eq:1-3} converges as $\e\downarrow 0$ to the so-called Cole-Hopf solution $h_{\text{CH}}(t,x)$ of the KPZ equation \cite{Hairer13,Hai14}, while the solution of \eqref{3_eq:1-4} converges to $h_{\text{CH}}(t,x) + \frac1{24}t$ under the equilibrium setting \cite{FQ} and the non-equilibrium setting for a maximal solution \cite{Ho}. The method of \cite{FQ} is based on the Cole-Hopf transform, which is not available for our multi-component coupled equation in general. 

\subsection{Main results}

Our first goal is to study the limits of the solutions of two types of approximating equations \eqref{3_eq:1-2} and \eqref{3_eq:1} as $\e\downarrow 0$ based on the paracontrolled calculus introduced by Gubinelli et al.\ \cite{GIP,GP} as in \cite{Ho} for $d=1$. Especially, we study the difference between these two limits, which extends the results for the scalar-valued
KPZ equation mentioned above.
For $\k\in\R$ and $r\in\N$, $(\mathcal{C}^\k)^r:=\mathcal{B}_{\infty,\infty}^\k(\T;\R^r)$ denotes the $\R^r$-valued 
Besov space on $\T$.  Our first two main theorems are formulated as follows.

\begin{theo}  \label{3_thm:1.1}
{\rm (1)} Let $0<\delta<\delta'<\frac12$ be fixed. For every $h(0)\in(\mathcal{C}^\de)^d$, there exists a unique solution $h^\e$ of the KPZ approximating equation \eqref{3_eq:1-2} up to the survival time $T_{\text{\rm sur}}^\e\in (0,\infty]$ (i.e. $T_{\text{\rm sur}}^\e=\infty$ or $\lim_{T\uparrow T_{\text{\rm sur}}^\e}\|h^\e\|_{C([0,T],(\mathcal{C}^\de)^d)}=\infty$). With a proper choice of $B^{\e,\b\ga}$, there exists a random time $T_{\text{\rm sur}}\in(0,\infty]$ such that $T_{\text{\rm sur}}\le\liminf_{\e\downarrow 0}T_{\text{\rm sur}}^\e$ in probability and $h^\e$ converges to some $h$ in $C([0,T],(\mathcal{C}^\delta)^d)\cap C((0,T],(\mathcal{C}^{\de'})^d)$ in probability for every $0<T< T_{\text{\rm sur}}$. This $T_{\text{\rm sur}}$ can be chosen maximal in the sense that $T_{\text{\rm sur}}=\infty$ or $\lim_{T\uparrow T_{\text{\rm sur}}}\|h\|_{C([0,T],(\mathcal{C}^\de)^d)}=\infty$.  The survival time $T_{\text{\rm sur}}$ depends on the initial value $h(0)$ and driving processes introduced in Section \ref{3_section3.2}. \\
{\rm (2)} A similar result holds for the solution $\tilde h^\e$ of the KPZ approximating equation
\eqref{3_eq:1} with some limit $\tilde h$ under a proper choice of $\tilde B^{\e,\b\ga}$. Moreover, under a well-adjusted choice of the renormalization factors $B^{\e,\b\ga}$ and  $\tilde B^{\e,\b\ga}$ as in Section \ref{3_section 4},
we can make $h=\tilde h$.
\end{theo}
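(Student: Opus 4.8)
The plan is to run the paracontrolled fixed-point scheme of \cite{GIP,GP}, in the one-dimensional form developed in \cite{Ho}, uniformly in the mollification parameter $\e$, and then to compare the two resulting solution maps. For fixed $\e>0$ the smeared noise $\si^\a_\b\,\xi^\b*\eta^\e$ is a.s.\ spatially smooth, so \eqref{3_eq:1-2} is a classical semilinear parabolic SPDE: decomposing $h^\e=Z^\e+v^\e$ with $Z^\e$ the (spatially smooth) solution of the linear equation started from $0$, the remainder $v^\e$ solves a parabolic PDE whose quadratic nonlinearity in $\partial_xZ^\e+\partial_xv^\e$ is, for fixed $\e$, locally Lipschitz between the relevant Besov spaces (one uses Bony's product estimates together with the Schauder smoothing of the heat semigroup, $v^\e$ being sought in a space where $\partial_xv^\e$ has positive regularity). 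A contraction argument — in a space carrying an integrable singularity at $t=0$ to accommodate the $(\C^\de)^d$-valued initial datum — then gives a unique mild solution on a maximal interval $[0,T^\e_{\text{\rm sur}})$, together with the stated blow-up alternative.

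For the limit $\e\downarrow0$ I would pass to the tilt $u^\e=\partial_xh^\e$ and set up the paracontrolled ansatz: peeling off the stochastic convolution $Z^\e$ and its first Picard correction, $u^\e$ is paracontrolled by $\partial_xZ^\e$ with derivative fixed by $\Ga$, modulo a remainder of strictly positive Besov regularity. Expanding the quadratic form $\Ga^\a_{\b\ga}\partial_xh^{\e,\b}\partial_xh^{\e,\ga}$ through paraproducts, the only contributions that do not converge classically are the resonant products $\rs$ of the rough components. The subtraction $c^\e A^{\b\ga}$ removes the leading $O(\e^{-1})$ divergence of $E[\partial_xZ^{\e,\b}\,\partial_xZ^{\e,\ga}]$, and the counterterm $B^{\e,\b\ga}$, of size $O(\log\e^{-1})$, removes the logarithmic divergence of the next-order resonant product; these are precisely the renormalizations written into \eqref{3_eq:1-2}.

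The heart of the argument is then the construction of the driving processes of Section~\ref{3_section3.2} — the finite family of Wiener-chaos–valued space-time functions indexed by the trees $\I,\Y,\V,\W,\Wc,\dots$, together with the renormalized resonances — and their convergence as $\e\downarrow0$. For each such object I would establish moment bounds $E\|\,\cdot\,\|_{C([0,T],\C^\k)}^p\lesssim1$ and Cauchy estimates in $\e$ by Gaussian hypercontractivity on a fixed inhomogeneous Wiener chaos, which reduces everything to explicit estimates on the singular heat kernels convolved with $\eta^\e$, the counterterms being chosen so as to cancel exactly the divergent parts. Granting that this enhanced data converges in probability in the appropriate product of $C([0,T],\C^\k)$-type spaces, the paracontrolled fixed point depends on $\e$ only through that data and its solution map is locally Lipschitz in it, uniformly over $\e\in[0,1]$; feeding in $\e>0$ and $\e=0$ simultaneously gives local solutions on a common random interval, the convergence $h^\e\to h$ in $C([0,T],(\C^\de)^d)\cap C((0,T],(\C^{\de'})^d)$, the bound $T_{\text{\rm sur}}\le\liminf_{\e\downarrow0}T^\e_{\text{\rm sur}}$, and — via the blow-up alternative together with the a priori control of the solution map by the size of the data — the maximality of $T_{\text{\rm sur}}$. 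I expect the main obstacle to be not the bare existence of these limits but the careful bookkeeping of the \emph{finite parts} of the logarithmically divergent objects, since that is exactly what part~(2) needs.

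For part~(2), equation \eqref{3_eq:1} differs from \eqref{3_eq:1-2} only by the convolution $*\eta_2^\e$ applied to the renormalized nonlinearity; since $\eta_2^\e$ is an approximate identity at scale $\e$, the whole scheme above goes through, except that each stochastic object acquires an extra $\eta_2^\e$ at the node carrying the nonlinearity, which changes the logarithmic counterterm from $B^{\e,\b\ga}$ to $\tilde B^{\e,\b\ga}$ and shifts the finite limits of the relevant resonant products; this yields a limit $\tilde h$ and a maximal survival time as before. To obtain $h=\tilde h$ I would compare $h^\e$ with the solution of \eqref{3_eq:1} driven by the same noise realization: their difference satisfies an equation that is linear in the difference, with a source that converges, as $\e\downarrow0$, to an explicit limit governed by the mismatch of the two renormalization procedures — the $d$-component analogue of the $\tfrac1{24}t$ discrepancy between \eqref{3_eq:1-3} and \eqref{3_eq:1-4}. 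Tuning the finite parts of $B^{\e,\b\ga}$ and $\tilde B^{\e,\b\ga}$ as prescribed in Section~\ref{3_section 4} so that this limiting source vanishes then forces the two limits to coincide; isolating that source explicitly is the key computation in this part.
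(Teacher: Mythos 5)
Your plan for part (1) is essentially the paper's: build the paracontrolled solution map in the spirit of \cite{Ho} (the paper's Theorem \ref{3_thm:3}), construct the enhanced drivers $\mathbb{H}^\e$ indexed by trees, prove their $L^p$-convergence by hypercontractivity on a fixed Wiener chaos (Theorem \ref{3_conv of driv}), and feed them into a solution map that is continuous in the data. One structural caveat: the paper peels off \emph{three} explicit stochastic objects $H_\I, H_\Y, H_\W$ before passing to a paracontrolled remainder $h_{\ge3}=f+g$; your ``stochastic convolution plus first Picard correction'' stops one layer short, and for KPZ at this roughness the third layer $H_\W$ is genuinely needed before the remainder becomes amenable to the fixed-point argument. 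Still, this is a correctible bookkeeping point and does not change the mechanism.

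Your approach to part (2) has a genuine gap. You propose to compare $h^\e$ and $\tilde h^\e$ directly by differencing and to argue that $w^\e=h^\e-\tilde h^\e$ ``satisfies an equation that is linear in the difference, with a source.'' Writing out that equation gives
\begin{align*}
\mathcal{L}w^{\e,\a}&=\tfrac12\Ga^\a_{\b\ga}\big(\partial_xw^{\e,\b}\,\partial_xh^{\e,\ga}+\partial_x\tilde h^{\e,\b}\,\partial_xw^{\e,\ga}\big)\\
&\quad+\tfrac12\Ga^\a_{\b\ga}\big(\partial_x\tilde h^{\e,\b}\partial_x\tilde h^{\e,\ga}-c^\e A^{\b\ga}\big)(I-*\eta_2^\e)+\tfrac12\Ga^\a_{\b\ga}\big(\tilde B^{\e,\b\ga}-B^{\e,\b\ga}\big),
\end{align*}
and this is not a tractable linear equation: the coefficients $\partial_xh^\e$ and $\partial_x\tilde h^\e$ of the ``linear'' part are distributions of regularity $-\tfrac12-$, so the products with $\partial_xw^\e$ are exactly as singular as the original problem, and the $(I-*\eta_2^\e)$ term applied to $\partial_x\tilde h^\e\partial_x\tilde h^\e$ does not converge to anything classical without its own renormalized analysis. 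You would end up re-doing the entire paracontrolled machinery for this ``difference equation,'' which defeats the purpose. The paper avoids this pitfall by a different and cleaner route: it defines \emph{two} solution maps, $S_{\text{KPZ}}$ for the unmollified nonlinearity and $S_{\text{KPZ}}^\e$ for the nonlinearity smoothed by $\fa^2(\e D)$, shows $S_{\text{KPZ}}^\e\to S_{\text{KPZ}}$ (Theorem \ref{3_thm:5}), and then — crucially — proves that both renormalized drivers $\mathbb{H}^\e$ and $\tilde{\mathbb{H}}^\e$ converge to one and the \emph{same} limiting driver $\mathbb{H}$ (Theorem \ref{3_conv of driv}), provided $B^\e=C^\e+2D^\e$ and $\tilde B^\e=\tilde C^\e+2\tilde D^\e$ are chosen as in Section \ref{3_section 4}. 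Then $h^\e=S_{\text{KPZ}}(f(0),g(0),\mathbb{H}^\e)$ and $\tilde h^\e=S_{\text{KPZ}}^\e(f(0),g(0),\tilde{\mathbb{H}}^\e)$ both converge to $S_{\text{KPZ}}(f(0),g(0),\mathbb{H})$ by continuity, and $h=\tilde h$ falls out with no further analysis. The statement ``under a well-adjusted choice of renormalization factors, $h=\tilde h$'' is thus proved at the level of the drivers, not at the level of the solutions, and that is the missing structural idea in your proposal.
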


\begin{rem}
Precisely, the convergence $h^\epsilon\to h$ considered here means that
\begin{align*}
&P(\|h^\epsilon-h\|_{C([0,T],(\mathcal{C}^\delta)^d)\cap C([t,T],(\mathcal{C}^{\de'})^d)}>\lambda,\ T<T_{\text{\rm sur}}\wedge T_{\text{\rm sur}}^\epsilon)\\
&+P(T_{\text{\rm sur}}^\epsilon\le T_{\text{\rm sur}}-\lambda,\ T_{\text{\rm sur}}<\infty)+P(T_{\text{\rm sur}}^\epsilon\le T,\ T_{\text{\rm sur}}=\infty)\to0
\end{align*}
for every $0<t<T$ and $\lambda>0$. The convergence $\tilde{h}^\epsilon\to h$ is similarly understood.
\end{rem}

\begin{theo}  \label{3_thm:1.2}
All components of the renormalization matrices $B^{\e}$ and $\tilde{B}^{\e}$ defined in Section \ref{3_section 4} behave as $O(1)$ if and only if the trilinear condition \eqref{3_eq:1.2-s-hat} holds. In particular, when \eqref{3_eq:1.2-s-hat} holds, we can choose $B^\e=\tilde B^\e=0$ in the approximating equations \eqref{3_eq:1-2} and \eqref{3_eq:1}, and the corresponding solutions $h_{B=0}^\e$ and $\tilde h_{\tilde B=0}^\e$ converge to $h_{B=0}$ and $\tilde h_{\tilde B=0}$, respectively, as $\e\downarrow 0$. In the limit, we have
$$
\tilde h_{\tilde B=0}^\a(t,x) = h_{B=0}^\a(t,x) + c^\a t, \quad 1 \le \a \le d,
$$
where
$$
c^\a =  \frac1{24} \sum_{\b_1,\b_2} \si_\b^\a \hat\Ga_{\a_1\a_2}^\b 
 \hat\Ga_{\b_1\b_2}^{\a_1} \hat\Ga_{\b_1\b_2}^{\a_2}.
$$
\end{theo}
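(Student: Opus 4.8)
The plan is to set up both approximating equations in the paracontrolled framework and track, through the limiting procedure of Theorem~\ref{3_thm:1.1}, exactly which divergent (or finite) constants arise in each case; the difference $\tilde h^\a - h^\a$ will then be identified as the limit of an explicit spatially-constant drift accumulated by the extra mollifier $*\eta_2^\e$ in \eqref{3_eq:1}. Concretely, I would first expand $h^{\e} = \tfrac12 \Ga X_{2} + (\text{smoother remainder})$, where $X_{2}$ is the second-order Wick-type iterated stochastic term built from the linear solution $X = X^\e$ of $\partial_t X^\e = \tfrac12\partial_x^2 X^\e + \si\,\xi*\eta^\e$, and similarly for $\tilde h^\e$. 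The only place where \eqref{3_eq:1-2} and \eqref{3_eq:1} genuinely differ is the nonlinear term $\tfrac12\Ga(\partial_x h^{\e,\b}\partial_x h^{\e,\ga} - c^\e A^{\b\ga} - B^{\e,\b\ga})$ versus the same quantity convolved with $\eta_2^\e$ (with $\tilde B^{\e}$ in place of $B^{\e}$). Taking $B^\e = \tilde B^\e = 0$ — legitimate by Theorem~\ref{3_thm:1.2}, since the trilinear condition \eqref{3_eq:1.2-s-hat} is assumed — both renormalization matrices stay $O(1)$, so both solutions converge, and the difference $D^{\e,\a} := \tilde h^{\e,\a} - h^{\e,\a}$ solves a linear equation $\partial_t D^{\e,\a} = \tfrac12\partial_x^2 D^{\e,\a} + (\text{coupling to }h^\e,\tilde h^\e) + R^{\e,\a}$, where the new source $R^{\e,\a}$ comes entirely from $\big(\tfrac12\Ga_{\b\ga}^\a(\partial_x\tilde h^{\e,\b}\partial_x\tilde h^{\e,\ga} - c^\e A^{\b\ga})\big)*\eta_2^\e$ minus its un-convolved counterpart.

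The heart of the computation is then to show that $R^{\e,\a} \to c^\a$ in the appropriate sense, with $c^\a$ the stated constant, while all the other error terms coming from the paracontrolled expansion vanish. The key identity is that $(\,\cdot\,)*\eta_2^\e - (\,\cdot\,)$ applied to a rough product picks up, in the limit, a constant proportional to the short-distance singularity of the relevant covariance. I would compute $E\big[(\partial_x X^{\e,\b}_2 * \eta_2^\e)(x)\big]$-type quantities — more precisely, the difference between the mollified and un-mollified versions of the resonant part of $\partial_x\tilde h^{\e,\b}\partial_x\tilde h^{\e,\ga}$ — using the explicit heat-kernel/Fourier representation of $X^\e$ and the Wick-renormalized second chaos component $X_2^\e = \tfrac12\,\cdot\,$ (so that $\partial_x X^{\e,\b}_2$ carries the coupling $\hat\Ga$). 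The factor $\tfrac1{24}$ should emerge exactly as in the scalar case $\tilde h = h_{\mathrm{CH}} + \tfrac1{24}t$ of \eqref{3_eq:1-4} (cf.\ \cite{FQ,Ho}): the integral $\int_0^\infty\!\int_{\R}$ of the square of the derivative of the heat kernel against $\eta_2$ produces, after the renormalization subtraction removes the leading divergence, the finite value $\tfrac1{24}$ times the tensorial contraction $\si_\b^\a\hat\Ga_{\a_1\a_2}^\b\hat\Ga_{\b_1\b_2}^{\a_1}\hat\Ga_{\b_1\b_2}^{\a_2}$, where the two $\hat\Ga$'s with repeated lower indices come from squaring the second-chaos term and the trilinear symmetry \eqref{3_eq:1.2-s-hat} is what makes this contraction well-defined and closes the combinatorics. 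Because $c^\a$ is a constant in $x$, the equation for $D^{\e,\a}$ is solved by $D^{\e,\a}(t,x) = c^\a t + o(1)$ uniformly on compact time intervals up to the survival time.

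The main obstacle I anticipate is not the constant $c^\a$ itself — that is a finite-chaos computation — but showing that \emph{all} the remaining discrepancy terms between the two paracontrolled expansions converge to zero, i.e.\ that the extra mollifier $\eta_2^\e$ does not perturb the higher-order driving processes (the third-order trees $\W$, $\Wc$, $\IWc$, $\Bo$, etc.\ appearing in Section~\ref{3_section3.2}) in the limit, and that the convergence is uniform enough to pass to the limit in the (random) survival time. This requires the kind of stability estimates for the solution map with respect to the enhanced data that underlie Theorem~\ref{3_thm:1.1}: one must check that replacing each iterated integral by its $\eta_2^\e$-convolved analogue gives a perturbation that is $O(\e^\theta)$ in the relevant $\C^\k$-norms (only the single resonant term at second chaos survives with an $O(1)$ contribution, precisely because its counterterm $c^\e A^{\b\ga}$ is also only convolved, not doubled). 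Once that is in place, subtracting the two expansions and invoking the local Lipschitz dependence of the fixed point on the data — together with the continuity of the limit in the survival time from Theorem~\ref{3_thm:1.1} — yields $\tilde h^\a = h^\a + c^\a t$ up to $T_{\mathrm{sur}}$, which is the claim.
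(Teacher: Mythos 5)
The central difficulty is that your proposal does not actually establish the first half of the theorem, and for the second half it takes a much harder route than necessary because it misses a key structural observation.

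On the ``if and only if'' part: you invoke Theorem~\ref{3_thm:1.2} to justify that $B^\e$ and $\tilde B^\e$ stay $O(1)$ under the trilinear condition, but that is exactly what the theorem asserts — the argument is circular as stated. The paper's proof of this part rests on two prior facts: (i) the explicit factorizations $C^{\e,\b\ga}=F^{\b\ga}C^\e$, $D^{\e,\b\ga}=G^{\b\ga}D^\e$, $\tilde C^{\e,\b\ga}=F^{\b\ga}\tilde C^\e$, $\tilde D^{\e,\b\ga}=G^{\b\ga}\tilde D^\e$ with scalar factors diverging like $\log\e^{-1}$ (Lemmas~\ref{3_lem:C} and \ref{3_lem:D}); and (ii) the scalar identities $\tilde C^\e+2\tilde D^\e=0$ and $C^\e+2D^\e=-\tfrac1{12}+O(\e)$ imported from the $d=1$ case. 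Then $B^\e=F C^\e+2GD^\e$ and $\tilde B^\e=F\tilde C^\e+2G\tilde D^\e$ are $O(1)$ precisely when $F=G$, and Lemma~\ref{lemm:expression of G and F} shows $F=G$ is equivalent to \eqref{3_eq:1.2-s-hat}. Your proposal needs some substitute for this chain of identifications; it currently has none.

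For the formula for $c^\a$, you set up the difference $\tilde h^\e-h^\e$ as the solution of a linear PDE with random rough coefficients and try to show that only one resonant constant survives. This is precisely the kind of full re-derivation that the paper avoids. The key observation you are missing is elementary: $B^{\e,\b\ga}$ is a deterministic spatial constant, so $h^{\e,\a}_{B=0}=h^{\e,\a}+\tfrac t2\Ga^\a_{\b\ga}B^{\e,\b\ga}$ is an explicit shift of $h^\e$, and likewise for $\tilde h^\e_{\tilde B=0}$. Hence
$$
\tilde h^{\e,\a}_{\tilde B=0}-h^{\e,\a}_{B=0}=\bigl(\tilde h^{\e,\a}-h^{\e,\a}\bigr)+\tfrac t2\Ga^\a_{\b\ga}\bigl(\tilde B^{\e,\b\ga}-B^{\e,\b\ga}\bigr),
$$
and Theorem~\ref{3_thm:1.1}(2) already asserts $\tilde h^\e-h^\e\to0$ under the well-adjusted renormalization. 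The ``main obstacle'' you flag — checking that the $\eta_2^\e$-convolution does not perturb the third- and fourth-chaos drivers in the limit — is thus not an obstacle at all: it is the content of Theorem~\ref{3_conv of driv}, which you may simply cite rather than re-prove. The remaining computation is one line: under $F=G$, $\tilde B^\e-B^\e=F(\tilde C^\e+2\tilde D^\e)-F(C^\e+2D^\e)=F\cdot\tfrac1{12}+O(\e)$, so $c^\a=\tfrac1{24}\Ga^\a_{\b\ga}F^{\b\ga}$, which is then rewritten using $\Ga_{\b\ga}^\a\si_{\a_1}^\b\si_{\a_2}^\ga=\si_\b^\a\hat\Ga_{\a_1\a_2}^\b$. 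Your heat-kernel integral route would eventually reproduce the scalar constants $C^\e+2D^\e$ and $\tilde C^\e+2\tilde D^\e$, but you are redoing a delicate cancellation in multi-index tensor notation where a simple shift argument plus the existing scalar result suffices.
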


\begin{rem}
For the equation \eqref{3_eq:1-2} with $d=1$ (then the condition \eqref{3_eq:1.2-s-hat} is trivial), Hairer \cite{Hairer13} first obtained that the two logarithmic renormalization factors (i.e., $O(\log\epsilon^{-1})$ terms) cancel with each other and the constant $\frac1{24}$ arises from the difference of these two terms, see also Section \ref{3_section 4}.
\end{rem}

\begin{rem}
Kupiainen and Marcozzi \cite{KpMr} studied another approximation of the equation \eqref{3_eq:KPZ} with $\sigma=I$ and obtained the cancellation of the logarithmic renormalization factors under the trilinear condition \eqref{3_eq:1.2-s}.
\end{rem}

Our second goal is to show the global-in-time existence of the limit process $h$ under the condition \eqref{3_eq:1.2-s-hat}. Let $\mu_A$ be the Gaussian measure on the space $(\mathcal{C}_0^{\delta-1})^d:=\{u\in(\mathcal{C}^{\delta-1})^d\,; \int_\T u=0\}$, $\de>0$, under which $u=(u^\a)_{\a=1}^d\in(\mathcal{C}_0^{\delta-1})^d$ has the covariance
$$
E[u^\a(x)u^\b(y)]=A^{\a\b}\delta(x-y).
$$
Note that $\mu_A$ is the distribution of $(\partial_x\si B)_{x\in \T}$, which is the limit in law of
that of  $\big(\partial_x(\si B*\eta^\e)\big)_{x\in \T}$ as $\e\downarrow 0$.
When $\si=I$, $\mu_A$ is called an $\R^d$-valued spatial white noise on $\T$.

\begin{theo} \label{3_thm:1.3}
Let $0<\delta<\delta'<\frac12$ and assume the trilinear condition \eqref{3_eq:1.2-s-hat}. Then there exists a subset $H\subset(\mathcal{C}_0^{\delta-1})^d$ such that $\mu_A(H)=1$, and if $\partial_xh(0)\in H$, the convergence to the limit process $h$ as above holds on whole $[0,\infty)$ (i.e., $h^\epsilon$ and $\tilde{h}^\epsilon$ exist in the space $C([0,\infty),(\mathcal{C}^\delta)^d)\cap C((0,\infty),(\mathcal{C}^{\delta'})^d)$ almost surely, and both of them converge to the same $h$ in the space $C([0,T],(\mathcal{C}^\delta)^d)\cap C([t,T],(\mathcal{C}^{\delta'})^d)$ for every $0<t<T$ in probability).  Moreover, the spatial derivative $u=\partial_xh$ of the limit process $h$ is a Markov process on $(\mathcal{C}_0^{\delta-1})^d$ which admits $\mu_A$ as an invariant measure.  
\end{theo}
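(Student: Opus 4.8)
The plan is to carry out the argument at the level of the tilt process $u=\partial_x h$ (as announced in the abstract), using the second approximation $\tilde h^\e$ of \eqref{3_eq:1} as the object that carries a \emph{global}, explicit invariant measure. Under \eqref{3_eq:1.2-s-hat}, by Theorem \ref{3_thm:1.2} we may use the renormalization so that $h^\e,\tilde h^\e$ converge to the common limit $h$, with tilt $u=\partial_x h=\partial_x\tilde h$. Recall from the introduction (extending \cite{F15} to general $\si$) that the smeared Gaussian measure $\mu_A^\e$, the law of $\partial_x(\si B*\eta^\e)$, is infinitesimally invariant for $u^\e=\partial_x\tilde h^\e$. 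For fixed $\e>0$, equation \eqref{3_eq:1} is a classical semilinear SPDE with spatially smooth noise for which, after the linear change of variables $\hat h=\tau\tilde h$ normalizing $\si$ to the identity, \eqref{3_eq:1.2-s-hat} furnishes an $\e$-dependent $L^2$-type a priori bound precluding explosion; hence infinitesimal invariance upgrades to genuine invariance, $u^\e$ is a stationary Markov process with one-dimensional marginals $\mu_A^\e$, and $T_{\mathrm{sur}}^\e=\infty$ a.s. (the same bound makes $h^\e,\tilde h^\e$ globally defined, their zero modes being driven by the global tilt dynamics). As $T_{\mathrm{sur}}^\e=\infty$, the conclusion of Theorem \ref{3_thm:1.1} reduces to: $u^\e\to u$ in probability in $C([0,T],(\mathcal{C}^{\delta-1})^d)$ on the event $\{T<T_{\mathrm{sur}}\}$, for every $T>0$.

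The second step transfers an a priori bound to the limit. The measures $\mu_A^\e$ are centred Gaussian on $(\mathcal{C}^{\delta-1})^d$ (since $\delta<\tfrac12$) with $\e$-uniform moments, and $\mu_A^\e\to\mu_A$ weakly; hence by stationarity $\sup_\e\sup_{t\ge0}\int F\,d\mu_A^\e<\infty$ for bounded continuous $F\ge0$. Fixing $t$, passing to a subsequence along which $u^\e(t)\to u(t)$ a.s.\ on $\{t<T_{\mathrm{sur}}\}$, and using Fatou together with weak convergence, for every bounded continuous $F\ge0$:
$$
E\big[F(u(t))\,\mathbf{1}_{\{t<T_{\mathrm{sur}}\}}\big]\ \le\ \int F\,d\mu_A .
$$
Taking $F$ to be continuous, equal to $1$ on $\{\|v\|_{(\mathcal{C}^{\delta-1})^d}\ge2R\}$ and to $0$ on $\{\|v\|_{(\mathcal{C}^{\delta-1})^d}\le R\}$ yields the key uniform-in-time tail bound
$$
\sup_{t\ge0}P\big(t<T_{\mathrm{sur}},\ \|u(t)\|_{(\mathcal{C}^{\delta-1})^d}\ge2R\big)\ \le\ \mu_A\big(\{v:\|v\|_{(\mathcal{C}^{\delta-1})^d}\ge R\}\big)=:\rho(R),
$$
with $\rho(R)$ decaying faster than any power of $R^{-1}$ by the Fernique estimate for the Gaussian measure $\mu_A$.

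The third step, which I expect to be the main obstacle, promotes this to global existence by a restart argument. From the local theory behind Theorem \ref{3_thm:1.1} there is a deterministic time $\tau(R,K)$ with $\tau(R,K)\ge c_0(1+R+K)^{-\theta}$ for some $c_0,\theta>0$, such that whenever $\|u(s)\|_{(\mathcal{C}^{\delta-1})^d}\le R$ and the driving processes of Section \ref{3_section3.2}, collectively written $\Xi$, satisfy $\|\Xi\|_{[s,s+1]}\le K$, the solution continues past $s+\tau(R,K)$; moreover, since $\tau$ is bounded below on bounded sets and $\Xi$ is finite on compact intervals, the continuation criterion of Theorem \ref{3_thm:1.1} forces $\|u(t)\|_{(\mathcal{C}^{\delta-1})^d}\to\infty$ as $t\uparrow T_{\mathrm{sur}}$ whenever $T_{\mathrm{sur}}<\infty$. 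Fix $N\in\N$; choose $K=K(R)$, polynomial in $R$, with $P(\|\Xi\|_{[0,N+1]}>K(R))\le\rho(R)$ — possible since $\Xi$ has moments of all orders — put $\tau_0=\tau(2R,K(R))$ and $t_j=j\tau_0$. On the event $\{t_j<T_{\mathrm{sur}},\ \|u(t_j)\|_{(\mathcal{C}^{\delta-1})^d}\le2R,\ \|\Xi\|_{[0,N+1]}\le K(R)\}$ the restarted solution survives past $t_{j+1}$, so
$$
P(T_{\mathrm{sur}}\le N)\ \le\ \sum_{j:\,t_j\le N}\Big(P\big(t_j<T_{\mathrm{sur}},\,\|u(t_j)\|_{(\mathcal{C}^{\delta-1})^d}\ge2R\big)+P\big(\|\Xi\|_{[0,N+1]}>K(R)\big)\Big)\ \le\ \frac{2N}{\tau_0}\,\rho(R),
$$
and since $\tau_0^{-1}$ grows at most polynomially in $R$ while $\rho(R)$ decays faster than any power, the right-hand side tends to $0$ as $R\to\infty$. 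Hence $P(T_{\mathrm{sur}}\le N)=0$ for all $N$, i.e.\ $T_{\mathrm{sur}}=\infty$ a.s.\ when $\partial_x h(0)\sim\mu_A$; by Fubini this holds for $\mu_A$-a.e.\ deterministic initial tilt, which defines the set $H$ with $\mu_A(H)=1$, and the convergence of $h^\e,\tilde h^\e$ on all of $[0,\infty)$ follows by applying Theorem \ref{3_thm:1.1} on each $[0,T]$.

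With $T_{\mathrm{sur}}=\infty$ the indicator above equals $1$, so $E[F(u(t))]\le\int F\,d\mu_A$ for all bounded continuous $F\ge0$; applying this to $F$ and to $\|F\|_\infty-F$ forces equality for all bounded continuous $F$, so $u(t)\sim\mu_A$ whenever $u(0)\sim\mu_A$, i.e.\ $\mu_A$ is an invariant measure for $u$. The Markov property of $u$ on $(\mathcal{C}_0^{\delta-1})^d$ is inherited from that of the $u^\e$ via the causal (time-local) structure of the paracontrolled solution map together with the independence of increments of $\xi$. This gives all assertions of Theorem \ref{3_thm:1.3}; in conjunction with the strong Feller property of \cite{HM16} the hypothesis $\partial_x h(0)\in H$ can then be dropped.
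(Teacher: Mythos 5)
Your plan has the right overall shape (work at the tilt/Burgers level, obtain a uniform-in-time tail bound from stationarity of approximations under a Gaussian measure, then transfer it to the limit by a Da Prato--Debussche type argument), but there is a genuine gap in the first step, and this is precisely the place where the paper is forced to take a different route.

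You assert that for fixed $\e>0$ the second approximating equation \eqref{3_eq:1} (equivalently its tilt \eqref{eq:6.2}) admits an $\e$-dependent $L^2$-type a~priori bound and is therefore globally well posed, so that infinitesimal invariance of $\mu_A^\e$ upgrades to genuine stationarity. This is not true, and the failure is structural. Testing \eqref{eq:6.2} against $(A^{-1})_{\a\b}\tilde u^\b$, the nonlinear term becomes
$-\tfrac12\tilde\Ga_{\a\b\ga}\int_\T\bigl(\partial_x\tilde u^\a*\eta_2^\e\bigr)\tilde u^\b\tilde u^\ga\,dx$,
which does \emph{not} vanish under \eqref{3_eq:1.2-s-hat} because the convolution $*\eta_2^\e$ hits only one of the three factors. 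The cancellation is restored only if one tests against $(A^{-1})_{\a\b}\fa^{-2}(\e D)\tilde u^\b$, i.e.\ if one controls the modified energy $(A^{-1})_{\a\b}\langle\tilde u^\a,\fa^{-2}(\e D)\tilde u^\b\rangle$; but under $\mu_A^\e$ the deconvolved field $\fa^{-1}(\e D)\tilde u$ is white noise, so this modified energy is a.s.\ infinite, and the noise injection rate in the corresponding It\^o balance is $d\sum_k 4\pi^2k^2=\infty$. The modified energy estimate is finite and usable only after a spectral truncation. This is exactly why the paper does \emph{not} try to run the argument directly on \eqref{3_eq:1}: in Section~\ref{section 6} it shows global existence of \eqref{eq:6.2} only for $\mu_A^\e$-a.e.\ initial data, and it obtains this via a Galerkin approximation $\tilde u^N$ (for which $H^N=\sum_\a\|\t^\a\fa^{-1}(\e D)\Pi_N\tilde u^N\|_{L^2}^2$ is finite and the It\^o estimate closes) followed by the same Da~Prato--Debussche style transfer you are attempting. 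Using the $\e$-equation as your stationary reference therefore makes the argument circular.

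The paper sidesteps this altogether by introducing in Section~\ref{subsection:galerkin approximation} the smooth-spectral-cutoff Galerkin dynamics \eqref{3_galerkin:stationary} with \emph{unmollified} noise: it is a finite-dimensional SDE, so global existence is elementary (Lemma~\ref{galerkin:lemm:U^N does not explode}), Echeverr\'ia's criterion applies, and crucially its invariant measure is $\mu_A$ itself, not an $\e$-dependent smeared version. The uniform tail bound and the $L^p$/weak$^*$-compactness transfer (proof of Theorem~\ref{3_thm:5.7}) are then carried out directly against the fixed target measure $\mu_A$, after which Theorem~\ref{3_kpz equiv csb} lifts the conclusion back to the KPZ level. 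Your restart/chaining argument in the third step is a valid alternative to the paper's weak$^*$-compactness mechanism (both are standard variants of the Da~Prato--Debussche strategy, and your computation with the tail bound $\rho(R)$ is sound once the tail bound is available), and your derivation of invariance of $\mu_A$ by two-sided application of the one-sided inequality is clean. So the fix is essentially to replace the $\e$-approximation by the Galerkin approximation with unmollified noise in the first step; the rest of your outline then goes through and indeed parallels the paper's proof.
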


\begin{rem}
Proposition 5.4 of Hairer and Mattingly \cite{HM16} (combined with Theorem \ref{3_thm:1.3}) shows that the limit process $h$ exists on $[0,\infty)$ almost surely for all initial values $h(0) \in(\mathcal{C}^\delta)^d$, since the measure $\mu_A$ has a dense support in $(\mathcal{C}_0^{\delta-1})^d$.
\end{rem}

Finally in this subsection, we note that the Cole-Hopf transform works for the coupled KPZ equation \eqref{3_eq:KPZ} in special cases. For example, Erta\c{s} and Kardar \cite{EK} considered the $\R^2$-valued coupled equations
\begin{equation}\label{3_EK:d=2}
\begin{aligned}
\partial_th^1&=\tfrac12\partial_x^2h^1+\tfrac12\{\lambda_1(\partial_xh^1)^2+\lambda_2(\partial_xh^2)^2\}+\si_1\xi^1,\\
\partial_th^2&=\tfrac12\partial_x^2h^2+\lambda_1\partial_xh^1\partial_xh^2+\si_2\xi^2
\end{aligned}
\end{equation}
as a linearizable case. In general, if we assume that there exists an invertible matrix $s=(s^\a_\b)_{1\le \a,\b\le d}$ ($s$ may be complex valued) such that
\begin{align}\label{3_Cole Hopf}
\Ga_{\b\ga}^\a=\sum_{\a'}(s^{-1})_{\a'}^\a s_\b^{\a'}s_\ga^{\a'},
\end{align}
then $\hat h^\alpha=s^\alpha_\beta h^\beta$ defined from the solution $h$ of \eqref{3_eq:KPZ} satisfies
\begin{align}  \label{3_eq:1.12}
\partial_t\hat h^\a&=\tfrac12\partial_x^2\hat h^\a+\tfrac12s^\a_{\a'}\Ga_{\b\ga}^{\a'}\partial_xh^\b\partial_xh^\ga+s^\a_\b\si_\ga^\b\xi^\ga\\
&=\tfrac12\partial_x^2\hat h^\a+\tfrac12(\partial_x\hat h^\a)^2+s^\a_\b\si_\ga^\b\xi^\ga.
\notag
\end{align}
In this way, the nonlinear term is decoupled. 
Hence the Cole-Hopf transform $Z^\a=\exp{\hat h^\a}$ linearizes \eqref{3_eq:KPZ}, so that the argument in \cite{GP} yields the global existence of $h$. In fact, the equation \eqref{3_EK:d=2} satisfies the condition \eqref{3_Cole Hopf} with $s=\bigl(\begin{smallmatrix}\lambda_1&(\lambda_1\lambda_2)^{1/2}\\\lambda_1&-(\lambda_1\lambda_2)^{1/2}\end{smallmatrix}\bigr)$. Meanwhile, even if $d=2$, the matrices $\Ga^1=\bigl(\begin{smallmatrix}2&1\\1&1\end{smallmatrix}\bigr)$ and $\Ga^2=\bigl(\begin{smallmatrix}1&1\\1&2\end{smallmatrix}\bigr)$ satisfy the trilinear condition
\eqref{3_eq:1.2-s-hat}, but not \eqref{3_Cole Hopf}.

As for the invariant measure, the tilt process $\partial_x \hat h^\a$ of each component $\hat h^\a$
of the transformed process has the distribution $\mu_\a$ of $\big(\sqrt{\sum_\ga (s^\a_\b\si_\ga^\b)^2}
\partial_x B\big)_{x\in\T}$ as its invariant measure, where $B$ is the $1$-dimensional periodic Brownian motion.
This is seen by applying the result of \cite{FQ} or Theorem \ref{3_thm:1.3} stated above for each component noting that 
$s^\a_\b\si_\ga^\b\xi^\ga$ in \eqref{3_eq:1.12} is the scalar-valued space-time white noise with covariance
$\sum_\ga (s^\a_\b\si_\ga^\b)^2$.  In particular, if \eqref{3_Cole Hopf} holds, we see that the tilt process
$u=\partial_x h$ of the solution $h=(h^\a)$ of \eqref{3_eq:KPZ} in the limit with a suitable renormalization
has an invariant measure, whose marginals under the transform $\hat h= sh$ is
given by $\mu_\a$ for each $\a$.  Indeed, with the help of Rellich type theorem,
one can easily show the tightness on the space
$(\mathcal{C}_0^{\delta-1})^d$ of the Ces$\grave{a}$ro mean $\mu_T=\frac1T\int_0^T \mu(t)dt$
over $[0,T]$ of the distributions $\mu(t)$ of $\partial_x \hat h(t)$ having an initial distribution
$\otimes_\a \mu_\a$, so that the limit distribution of $\mu_T$ as $T\to\infty$ is the invariant
measure.  However, the joint distribution of such invariant measure is unclear.

\begin{rem}
As we stated in Theorem \ref{3_thm:1.2} and will see in Lemma \ref{lemm:expression of G and F} below, the trilinear condition \eqref{3_eq:1.2-s-hat} is equivalent to the condition ``$F=G$ as matrices", which is also equivalent to that the logarithmic renormalization factors $B^{\e,\b\ga}$ and $\tilde B^{\e,\b\ga}$ behave as $O(1)$; indeed, $\tilde B^{\e,\b\ga}=0$ under this condition.
However, the necessary and sufficient condition for the logarithmic renormalization factors staying bounded in the KPZ approximating equations \eqref{3_eq:1-2} and \eqref{3_eq:1} is that the quantities $\Gamma_{\beta\gamma}^\alpha B^{\epsilon,\beta\gamma}$ or $\Gamma_{\beta\gamma}^\alpha \tilde{B}^{\epsilon,\beta\gamma}$, rather than $B^{\e,\b\ga}$ or $\tilde B^{\e,\b\ga}$ themselves, are bounded for every $\alpha$, respectively. From the expressions given in Lemma \ref{lemm:expression of G and F}, this is equivalent to that the identity
\begin{align}\label{true condition for no log}
\hat{\Gamma}_{\alpha_1\alpha_2}^\alpha\hat{\Gamma}_{\alpha_3\alpha_4}^{\alpha_1}\hat{\Gamma}_{\alpha_3\alpha_4}^{\alpha_2}
=\hat{\Gamma}_{\alpha_1\alpha_2}^\alpha\hat{\Gamma}_{\alpha_3\alpha_4}^{\alpha_1}\hat{\Gamma}_{\alpha_2\alpha_4}^{\alpha_3},
\end{align}
where the sums $\sum$ over $(\alpha_1,\alpha_2,\alpha_3,\alpha_4)$ are omitted, holds for every $\alpha$. Both \eqref{3_eq:1.2-s-hat} and \eqref{3_Cole Hopf} are sufficient conditions of \eqref{true condition for no log}, but neither of them are necessary conditions.
In particular, the logarithmic renormalization factor does not appear in the equation \eqref{3_eq:1.12}.
\end{rem}

\subsection{Notations and organization of the paper}

The Fourier transform on $\R$ is denoted by $\fa= \mathcal{F}\eta\in \mathcal{S}(\R)$, i.e. $\fa(\th) = \int_\R e^{-2\pi ix\th}\eta(x)dx$, $\th \in \R$. When $\eta$ is even and satisfies $\int_\R\eta(x)=1$, then $\fa$ is real-valued and satisfies $\fa(0)=1$ and $\fa(\th) = \fa(-\th)$. The convolution operators $*\eta^\e$ and $*\eta_2^\e$ are represented by the Fourier multipliers $\fa(\e D)$ and $\fa^2(\e D)$, where $\fa(D)u:=\mathcal{F}^{-1}(\fa\mathcal{F}u)$.

We also consider the Fourier transform on $\T$ and use the same notation $\mathcal{F}$ and $\mathcal{F}^{-1}$:
\begin{align*}
&\mathcal{F} u(k) =\hat u(k) = \int_\T e^{-2\pi i kx}u(x)dx, \quad k\in \Z, \\
&\mathcal{F}^{-1}v(x) = \sum_k e^{2\pi i kx} v(k),\quad x\in\R.
\end{align*}
Then, the heat kernel associated with $\partial_t-\frac12\partial_x^2$ is given by
$$
p(t,x) = \sum_k e^{2\pi i kx} e^{-2\pi^2 k^2t}\bigg(= \mathcal{F}^{-1} \big( e^{-2\pi^2 k^2t}\big)(x) \bigg),\quad t>0, \; x\in \T.
$$
The noises $\xi^\b(t,x)$ are transformed into complex-valued white noises
$\xi^{\b,k}(s)= (\mathcal{F} \xi^\b(s,\cdot))(k)$ such that $\overline{\xi^{\b,k}(s)}=\xi^{\b,-k}(s)$ and
\begin{equation}  \label{3_eq:xicov}
E[\xi^{\b,k_1}(t)\xi^{\ga,k_2}(s)]
= \de^{\b\ga} \de(t-s) 1_{\{k_1+k_2=0\}}.
\end{equation}
In fact, the left hand side of \eqref{3_eq:xicov} is given by
$$
E\left[ \int_\T e^{-2\pi i k_1 x} \xi^\b(t,x) dx
\int_\T e^{-2\pi i k_2 y} \xi^\ga(s,y) dy \right]
= \de^{\b\ga} \de(t-s) \int_\T  e^{-2\pi i k_1 x}  e^{-2\pi i k_2 x} dx.
$$
The smeared noise is defined by $\xi*\eta^\e = \fa(\e D) \xi$, where
$\fa(D)u=\mathcal{F}^{-1}(\fa \mathcal{F} u)$ as we mentioned above.

This paper is organized as follows. In Section \ref{3_section 2}, following \cite{Ho} we formulate a fixed point problem associated with \eqref{3_eq:KPZ} and solve it by constructing a deterministic solution map from the initial value and deterministic driving terms. In Section \ref{3_section 3}, we prove the probabilistic part of Theorem \ref{3_thm:1.1}, i.e., we give controls of stochastic drivers and calculations of renormalization factors. In Section \ref{3_section 4}, we prove Theorem \ref{3_thm:1.2} under the trilinear condition \eqref{3_eq:1.2-s-hat}. In Section \ref{3_section 5}, we repeat the same arguments as in Section \ref{3_section 2} for the stochastic Burgers equation:
$$
\partial_t u^\a = \tfrac12 \partial_x^2 u^\a + \tfrac12 \Ga_{\b\ga}^\a
\partial_x (u^\b u^\ga) + \si^\a_\b \partial_x\xi^\b,
$$
and construct a well-defined solution map. We show the invariance of $\mu_A$ under \eqref{3_eq:KPZ} at the Burgers level and prove Theorem \ref{3_thm:1.3}. At last, we touch the global well-posedness of the approximating equations \eqref{3_eq:1-2} and \eqref{3_eq:1} at the Burgers level.


\section{Formal expansion and solving the coupled KPZ equation}\label{3_section 2}

\subsection{Preliminary consideration due to formal expansion}

In the coupled KPZ equation \eqref{3_eq:KPZ}, we think of the noise as the leading term and the nonlinear term as its perturbation.
Although we  eventually take $a=1$, we put $a>0$ in front of the nonlinear term:
\begin{equation}  \label{3_eq:2.1}
\mathcal{L} h^\a = \frac{a}2 \Ga_{\b\ga}^\a
\partial_x h^\b \partial_x h^\ga + \si^\a_\b \xi^\b,
\end{equation}
where $\mathcal{L} = \partial_t- \tfrac12 \partial_x^2$. Then, at least formally, one can expand the solution $h$ in $a$:
\begin{equation}  \label{3_eq:2.2}
h^\a = \sum_{k=0}^\infty a^k h_k^\a.
\end{equation}
Indeed, by inserting \eqref{3_eq:2.2} to \eqref{3_eq:2.1}, we have that
$$
\sum_{k=0}^\infty a^k \mathcal{L} h_k^\a= \si^\a_\b \xi^\b + \frac{a}2 \sum_{k_1,k_2=0}^\infty a^{k_1+k_2} \Ga_{\b\ga}^\a \partial_x h^\b_{k_1} \partial_x h^\ga_{k_2}.
$$
Thus, comparing the terms of order $a^0, a^1, a^2, a^3$ in both sides and noting the condition \eqref{3_eq:1.2}, we obtain the following identities:
\begin{equation}  \label{3_eq:2.3}
\begin{aligned}
\mathcal{L} h_0^\a &= \si^\a_\b \xi^\b, \\
\mathcal{L} h_1^\a &= \tfrac12 \Ga_{\b\ga}^\a 
\partial_x h^\b_0 \partial_x h^\ga_0, \\
\mathcal{L} h_2^\a &= \Ga_{\b\ga}^\a 
\partial_x h^\b_1 \partial_x h^\ga_0, \\
\mathcal{L} h_3^\a &= 
\tfrac12 \Ga_{\b\ga}^\a 
\partial_x h^\b_1 \partial_x h^\ga_1
+ \Ga_{\b\ga}^\a 
\partial_x h^\b_2 \partial_x h^\ga_0.
\end{aligned}
\end{equation}
The first equation determines $h_0^\a$, which is actually the Ornstein-Uhlenbeck process, and $h_0^\a\in \mathcal{C}^{1/2-}
:= \bigcap_{\de>0} \mathcal{C}^{1/2-\de}$ in $x$. Therefore, the product $\partial_x h^\b_0 \partial_x h^\ga_0$ in the second equation is not definable in a usual sense.  When $\xi^\b$ is replaced by the smeared noise $\xi^{\e,\b}:=\xi^\b*\eta^\e$, this product makes sense, since $h_0^\a \in C^\infty$ for such case. However, as we will see later in \eqref{3_eq:2.4}, for $h_1^\a$ to converge, we need to introduce a renormalization. At this moment, we just assume $h_1^\a \in \mathcal{C}^{1-}$ (note $-\tfrac12-\tfrac12+2=1)$ and then $h_2^\a \in \mathcal{C}^{3/2-}$ (note $-\tfrac12+0+2=\frac32)$ are defined in some sense. We denote $h_0^\a, h_1^\a, h_2^\a$ with stationary initial values by $H_\I^\a, H_\Y^\a, H_\W^\a$, respectively, see Section \ref{3_section 3} for details.

After defining $H_\I^\a, H_\Y^\a, H_\W^\a$, the KPZ equation (with $a=1$) for $h^\a =
H_\I^\a+ H_\Y^\a+ H_\W^\a+ h_{\ge 3}^\a$ can be rewritten into an equation for the remainder term $h_{\ge 3}$:
\begin{align}  \label{3_eq:h_3}
\mathcal{L} h_{\ge 3}^\a = 
\Phi^\a + \mathcal{L} h_3^\a, 
\end{align}
where $\Phi^\a=\Phi^\a(H_\I, H_\Y, H_\W, h_{\ge 3})$ is given by
\begin{align*}
\Phi^\a& = \Ga_{\b\ga}^\a \partial_x h^\b_{\ge 3} \partial_x H^\ga_\I
+ \Ga_{\b\ga}^\a (\partial_x H^\b_\W +\partial_x h^\b_{\ge 3})
\partial_x H^\ga_\Y \\
&\quad+ \tfrac12 \Ga_{\b\ga}^\a (\partial_x H^\b_\W +\partial_x h^\b_{\ge 3})
(\partial_x H^\ga_\W +\partial_x h^\ga_{\ge 3}).
\end{align*}
This is easily obtained from \eqref{3_eq:KPZ} and \eqref{3_eq:2.3} by computing $\mathcal{L}h^\a-\mathcal{L}H_\I^\a-\mathcal{L}H_\Y^\a-\mathcal{L}H_\W^\a$ and expanding $\mathcal{L}h^\a-\mathcal{L}H_\I^\a = \tfrac12 \Ga_{\b\ga}^\a\partial_x (H_\I^\b+H_\Y^\b+H_\W^\b+h_{\ge 3}^\b) \partial_x (H_\I^\ga+H_\Y^\ga+H_\W^\ga+ h_{\ge 3}^\ga)$. Note that $h_3^\a=h_3^\a(H_\I,H_\Y,H_\W)$ in \eqref{3_eq:h_3} is defined through the last identity in \eqref{3_eq:2.3}.

We now recall that Section 2 of \cite{Ho} briefly summarizes definitions and known results on Besov spaces, 
Bony's paraproducts $u\pl v$, $u\rs v$ of $u$ and $v$,
mollifier estimates, Schauder estimates, commutator estimates and others;
see \cite{GIP,GP} for details. 
To define $h_{\ge 3}^\a$, we need to introduce four more objects as driving terms:
\begin{align*}
H_\Bo^{\b\ga} &= \tfrac12 \partial_x H^\b_\Y \partial_x H^\ga_\Y,
&H_\D^{\b\ga} &= \partial_x H^\b_\W \rs \partial_x H^\ga_\I,\\
H_\K^\a &= \lq\lq\text{stationary solution of } \mathcal{L} H_\K^\a= \partial_x H^\a_\I",
&H_\Co^{\b\ga} &= \partial_x H_\K^\b \rs \partial_x H^\ga_\I.
\end{align*}
Indeed, to solve the equation \eqref{3_eq:h_3}, we divide $h_{\ge 3}^\a$ into the sum of two parts $f^\a$ and $g^\a$: $h_{\ge 3}^\a=f^\a+g^\a$, which solve
\begin{equation}\label{3_eq:fg}
\begin{aligned}
\mathcal{L} f^\a &= \Ga_{\b\ga}^\a 
(\partial_x H_\W^\b+\partial_x f^\b+\partial_x g^\b)\pl
\partial_x H^\ga_\I, \\
\mathcal{L} g^\a & = \Ga_{\b\ga}^\a 
(\partial_x H_\W^\b+\partial_x f^\b+\partial_x g^\b)(\rs+\pr)
\partial_x H^\ga_\I+\text{other terms},
\end{aligned}
\end{equation}
respectively.  Here, the implicitly written \lq\lq other terms" contain nonlinear operators of sufficiently regular
functions, so that they are well-defined if we can define $H_\Bo^{\b\ga},H_\D^{\b\ga}\in\mathcal{C}^{0-}$. To define the term $\partial_xf^\b\rs\partial_xH^\ga_\I$ in the right hand side of $\mathcal{L}g^\a$, we first introduce $H_\K^\a$ as a solution of $\mathcal{L} H_\K^\a= \partial_x H^\a_\I$.  Then by definition, $f^\a$ has the form
$$
f^\a = P_\cdot f^\a(0) + \Ga_{\b\ga}^\a(\partial_xH_\W^\b+\partial_xh_{\ge3}^\b) \pl (H_\K^\ga- P_\cdot H_\K^\ga(0))
+ C_1^\alpha(H_\K,h_{\ge3},H_\I),
$$
with a term $C_1^\alpha(H_\K,h_{\ge3},H_\I)$ sufficiently regular in the sense that the resonant $\partial_xC_1^\beta\rs\partial_xH_\I^\gamma$ is well-defined, see Lemma 3.1 of \cite{Ho}. Here $P\equiv P_t$ is the heat semigroup defined by $P_tu=\int_\T p(t,\cdot-y)u(y)dy$. By the commutator estimate, e.g., see Lemma 2.4 of \cite{GIP} or 
Proposition 2.12 of \cite{Ho}, the term $\partial_x f^\b\rs \partial_x H^\ga_\I$ is defined if $H_\Co^{\b\ga} = \partial_x H_\K^\b\rs\partial_x H^\ga_\I\in\mathcal{C}^{0-}$ is given
a priori.

\subsection{Drivers}

Fix $\k\in (\frac13,\tfrac12)$. The driver of the coupled KPZ equation is the element ${\mathbb H}$ of the form
\begin{align*}
&{\mathbb H} :=(H_\I,H_\Y,H_\W,H_\Bo,
H_\D,H_\K,H_\Co)\\
&\in C([0,T],(\mathcal{C}^\k)^d)
\times C([0,T],(\mathcal{C}^{2\k})^d)
\times \{ C([0,T],(\mathcal{C}^{\k+1})^d) \cap C^{1/4}([0,T],(\mathcal{C}^{\k+1/2})^d)\} \\
&\quad\times C([0,T],(\mathcal{C}^{2\k-1})^{d^2}) \times C([0,T],(\mathcal{C}^{2\k-1})^{d^2}) 
\times C([0,T],(\mathcal{C}^{\k+1})^d) \times C([0,T],(\mathcal{C}^{2\k-1})^{d^2}),
\end{align*}
which satisfies $\mathcal{L}H_\K=\partial_xH_\I$. Note that, for $H_\Bo$ and others, the Besov space is $\R^d\otimes\R^d$-valued. We denote by $\mathcal{H}_{\text{KPZ}}^\k$ the class of all drivers. We write $\$\mathbb{H}\$_T$ for the product norm of $\mathbb{H}$ on the above space. Due to the preliminary and heuristic consideration, these terms should be defined a priori in some sense.  

\subsection{Deterministic result}

Fix $\la\in(\tfrac13,\k)$ and $\mu\in(-\la,\la]$. For a $\mathcal{D}'(\T,\R^d)$-valued functions $f=(f^\a)_{\a=1}^d$
and $g=(g^\a)_{\a=1}^d$ on $[0,T]$, we write $(f,g)\in\mathcal{D}_{\text{KPZ}}^{\la,\mu}([0,T])$ if
\begin{align*}
&\|(f,g)\|_{\mathcal{D}_{\text{KPZ}}^{\la,\mu}([0,T])}:=\\
&\sup_{t\in[0,T]}t^{\frac{\la-\mu}2}\|f(t)\|_{(\mathcal{C}^{\la+1})^d}
+\sup_{t\in[0,1]}\|f(t)\|_{(\mathcal{C}^{\mu+1})^d}
+\sup_{s<t\in[0,T]}s^{\frac{\la-\mu}2}\frac{\|f(t)-f(s)\|_{(\mathcal{C}^{\la+1/2})^d}}{|t-s|^{1/4}}\\
&+\sup_{t\in[0,T]}t^{\la-\mu}\|g(t)\|_{(\mathcal{C}^{2\la+1})^d}
+\sup_{t\in[0,1]}\|g(t)\|_{(\mathcal{C}^{2\mu+1})^d}
+\sup_{s<t\in[0,T]}s^{\la-\mu}\frac{\|g(t)-g(s)\|_{(\mathcal{C}^{2\la+1/2})^d}}{|t-s|^{1/4}}
\end{align*}
is finite.

The following theorem is due to the paracontrolled calculus and fixed point theorem. For the detailed proof, see Section 3 of \cite{Ho}.

\begin{theo}[Theorem 3.3 and Lemma 3.5 of \cite{Ho}]  \label{3_thm:3}
{\rm (1)} Let $T>0$ and ${\mathbb H}\in \mathcal{H}_{\text{\rm KPZ}}^\k$ be given. Then, for every initial value $(f(0),g(0))\in(\mathcal{C}^{\mu+1})^d\times(\mathcal{C}^{2\mu+1})^d$ the system \eqref{3_eq:fg} admits a unique solution in $\mathcal{D}_{\text{\rm KPZ}}^{\la,\mu}([0,T_*])$ up to the time
$$
T_*=C(1+\|f(0)\|_{(\mathcal{C}^{\mu+1})^d}+\|g(0)\|_{(\mathcal{C}^{2\mu+1})^d}+\$\mathbb{H}\$_{T}^3)^{-\frac2{\k-\la}}\wedge T,
$$
where $C$ is a universal constant depending only on $\k,\la,\mu$ and $T$. The solution satisfies
$$
\|(f,g)\|_{\mathcal{D}_{\text{\rm KPZ}}^{\la,\mu}([0,T_*])}\le C'(1+\|f(0)\|_{(\mathcal{C}^{\mu+1})^d}+\|g(0)\|_{(\mathcal{C}^{2\mu+1})^d}+\$\mathbb{H}\$_{T}^3),
$$
with a universal constant $C'$.  \\
{\rm (2)} Let $T_{\text{\rm sur}}\le T$ be the maximal time such that the existence and uniqueness of the solution hold on $[0,T_{\text{\rm sur}})$. The map $(f(0),g(0),\mathbb{H})\mapsto T_{\text{\rm sur}}$ is lower semi-continuous. If $T_{\text{\rm sur}}<T$, then
$$
\lim_{t\uparrow T_{\text{\rm sur}}}\|h\|_{C([0,t],(\mathcal{C}^{\k\wedge(\mu+1)\wedge(2\mu+1)})^d)}=\infty,
$$
where $h=H_\I+H_\Y+H_\W+f+g$. The map $S_{\rm KPZ}$ from $(f(0),g(0),{\mathbb H})\in(\mathcal{C}^{\mu+1})^d\times(\mathcal{C}^{2\mu+1})^d\times\mathcal{H}_{\text{\rm KPZ}}^\k$ to
the maximal solution $h\in C([0,T_{\text{\rm sur}}),(\mathcal{C}^{\k\wedge(\mu+1)\wedge(2\mu+1)})^d)$ is continuous.
\end{theo}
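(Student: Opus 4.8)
The plan is to follow the strategy of \cite{Ho} verbatim, replacing scalar quantities by their $\R^d$-valued (or $\R^d\otimes\R^d$-valued) counterparts throughout. Since the driver $\mathbb{H}$ is treated as a black box in this theorem, the proof is purely deterministic and reduces to a Picard iteration for the coupled system \eqref{3_eq:fg} in the Banach space $\mathcal{D}_{\text{KPZ}}^{\la,\mu}([0,T])$. First I would record the mapping properties of the basic analytic operations: the Schauder estimate for $\mathcal{L}^{-1}$ gaining two derivatives in space (with the singular-in-time weights $t^{(\la-\mu)/2}$ and $t^{\la-\mu}$ accounting for the gap between the initial regularity $\mu$ and the working regularity $\la$), the paraproduct estimates $\|u\pl v\|_{\mathcal{C}^{\beta}}\lesssim\|u\|_{\mathcal{C}^{\alpha}}\|v\|_{\mathcal{C}^{\beta}}$ for $\alpha\ge 0$, the resonant estimate $\|u\rs v\|_{\mathcal{C}^{\alpha+\beta}}\lesssim\|u\|_{\mathcal{C}^\alpha}\|v\|_{\mathcal{C}^\beta}$ for $\alpha+\beta>0$, and — crucially — the commutator/paracontrolled estimate that lets one make sense of $\partial_x f^\beta\rs\partial_x H_\I^\gamma$ once $H_\Co^{\beta\gamma}=\partial_x H_\K^\beta\rs\partial_x H_\I^\gamma$ is given a priori. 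All of these are cited in Section 2 of \cite{Ho}; the only new feature is bookkeeping of the tensor indices $\alpha,\beta,\gamma$, which does not affect any estimate because the coupling constants $\Ga^\a_{\b\ga}$ are fixed finite numbers.

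Next I would set up the solution map $\mathcal{M}:(f,g)\mapsto(\tilde f,\tilde g)$ by solving the right-hand sides of \eqref{3_eq:fg} with $\mathcal{L}^{-1}$, using the decomposition $f^\a=P_\cdot f^\a(0)+\Ga^\a_{\b\ga}(\partial_x H_\W^\b+\partial_x h_{\ge 3}^\b)\pl(H_\K^\ga-P_\cdot H_\K^\ga(0))+C_1^\alpha$ to exhibit $f$ as paracontrolled by $H_\K$, so that the resonant term in the $g$-equation is well-defined through $H_\Co$. The "other terms" in the $g$-equation involve only $H_\Bo,H_\D\in\mathcal{C}^{2\k-1}\subset\mathcal{C}^{0-}$ paired against sufficiently regular factors, hence are handled by plain products in Besov spaces. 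One then checks that $\mathcal{M}$ maps a ball of $\mathcal{D}_{\text{KPZ}}^{\la,\mu}([0,T_*])$ into itself and is a contraction there, provided $T_*$ is taken as in the statement; the power $-\frac{2}{\k-\la}$ and the cubic dependence on $\$\mathbb{H}\$_T$ come from balancing the time-weights against the (at most quadratic in $(f,g)$, cubic in the data) nonlinearity, exactly as in Theorem 3.3 of \cite{Ho}. The a priori bound on $\|(f,g)\|_{\mathcal{D}_{\text{KPZ}}^{\la,\mu}([0,T_*])}$ then follows from the fixed-point identity and the self-mapping estimate.

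For part (2), I would define $T_{\text{sur}}$ as the supremum of times of existence obtained by restarting the local theory, using that on any $[0,T']$ with $T'<T_{\text{sur}}$ the solution lies in $\mathcal{D}_{\text{KPZ}}^{\la,\mu}$ and hence $h=H_\I+H_\Y+H_\W+f+g$ lies in $C([0,T'],(\mathcal{C}^{\k\wedge(\mu+1)\wedge(2\mu+1)})^d)$. Lower semicontinuity of $(f(0),g(0),\mathbb{H})\mapsto T_{\text{sur}}$ follows from the continuity of the local solution map together with the quantitative lower bound on $T_*$, which depends on the data only through the norms appearing in the formula for $T_*$: if the solution survives past $T'$, then any nearby data also survives past $T'$ because $T_*$ is bounded below uniformly on a neighbourhood. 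The blow-up alternative $\lim_{t\uparrow T_{\text{sur}}}\|h\|_{C([0,t],\cdots)}=\infty$ when $T_{\text{sur}}<T$ is the contrapositive of the local restart: if $\|(f(t),g(t))\|$ stayed bounded in the initial-data norms $(\mathcal{C}^{\mu+1})^d\times(\mathcal{C}^{2\mu+1})^d$ along a sequence $t_n\uparrow T_{\text{sur}}$, one could restart from time $t_n$ and extend past $T_{\text{sur}}$, a contradiction; one must check that boundedness of $\|h\|_{C([0,t],(\mathcal{C}^{\k\wedge(\mu+1)\wedge(2\mu+1)})^d)}$ indeed controls $\|(f,g)\|$ in the relevant norms, which uses the smoothing built into the $\mathcal{D}^{\la,\mu}$ norm for positive times. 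Continuity of $S_{\text{KPZ}}$ on its (open) domain then follows by combining local continuity with a chaining argument over subintervals.

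The main obstacle — or rather the only genuinely delicate point — is the paracontrolled structure needed to define the resonant product $\partial_x f^\beta\rs\partial_x H_\I^\gamma$ appearing in the $g$-equation: one must verify that the remainder $C_1^\alpha(H_\K,h_{\ge 3},H_\I)$ is regular enough that $\partial_x C_1^\beta\rs\partial_x H_\I^\gamma\in\mathcal{C}^{0-}$, and that the commutator estimate applied to the leading paracontrolled part produces a term controlled by $\$\mathbb{H}\$_T$ (through $H_\Co$) plus a contraction-friendly remainder. In the present multi-component setting this is literally the same computation as Lemma 3.1 of \cite{Ho} with extra indices carried along, since the commutator estimate is bilinear and index-blind; so I expect no new difficulty beyond careful transcription. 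Everything else is routine once the estimates of Section 2 of \cite{Ho} are in hand, which is why the authors simply refer to that paper.
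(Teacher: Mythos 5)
Your proposal reproduces exactly the strategy the paper itself invokes: the paper cites Theorem 3.3 and Lemma 3.5 of \cite{Ho} and remarks that the result ``is due to the paracontrolled calculus and fixed point theorem,'' deferring all details to Section 3 of \cite{Ho}. Your outline—Picard iteration in $\mathcal{D}_{\text{KPZ}}^{\la,\mu}$, the paracontrolled ansatz for $f$ through $H_\K$, the commutator estimate and $H_\Co$ to define the resonant product, the time $T_*$ from balancing weights against a cubic nonlinearity, and the restart/blow-up argument for part (2)—is precisely that route, with the (correctly identified as purely notational) upgrade to $\R^d$-valued quantities, so it takes essentially the same approach.
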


We denote by $h=H_\I+H_\Y+H_\W+f+g\equiv S_{\rm KPZ}(f(0),g(0),\mathbb{H})$ the maximal solution.

\subsection{Renormalization}

By replacing $\xi^\b$ by $\xi^{\e,\b}=\xi^\b*\eta^\e$ in \eqref{3_eq:KPZ} and introducing the renormalization factors $- c^\e A^{\b\ga}$, $C^{\e,\b\ga}$ and $D^{\e,\b\ga}$, we have the following identities for the formal expansion $h^{\e,\a} = \sum_{k=0}^\infty a^k h_k^{\e,\a}$ of the solution of the approximating equation \eqref{3_eq:1-2}:
\begin{equation*} 
\begin{aligned}
\mathcal{L} h_0^{\e,\a} &= \si^\a_\b \xi^{\e,\b}, \\
\mathcal{L} h_1^{\e,\a} &= \tfrac12 \Ga_{\b\ga}^\a 
(\partial_x h^{\e,\b}_0 \partial_x h^{\e,\ga}_0 - c^\e A^{\b\ga}), \\
\mathcal{L} h_2^{\e,\a} &= \Ga_{\b\ga}^\a 
\partial_x h^{\e,\b}_1 \partial_x h^{\e,\ga}_0, \\
\mathcal{L} h_3^{\e,\a} &= 
\tfrac12 \Ga_{\b\ga}^\a (\partial_x h^{\e,\b}_1 \partial_x h^{\e,\ga}_1
-C^{\e,\b\ga}) + \Ga_{\b\ga}^\a 
(\partial_x h^{\e,\b}_2 \partial_x h^{\e,\ga}_0 - D^{\e,\b\ga}).
\end{aligned}
\end{equation*}
Then we obtain the renormalized driver $\mathbb{H}^\e$ corresponding to $\xi^\e$,
which is defined by the solutions of
\begin{equation} \label{3_eq:2.4}
\begin{aligned}
\mathcal{L} H_\I^{\e,\a} &= \si^\a_\b \partial_x\xi^{\e,\b}, \\
\mathcal{L} H_\Y^{\e,\a} &= \tfrac12 \Ga_{\b\ga}^\a 
(\partial_xH_\I^{\e,\b} \partial_xH_\I^{\e,\ga}-c^\e A^{\b\ga}), \\
\mathcal{L} H_\W^{\e,\a} &= \Ga_{\b\ga}^\a 
\partial_xH_\Y^{\e,\b} \partial_xH_\I^{\e,\ga},\\
\mathcal{L} H_\K^{\e,\a} &=\partial_xH_\I^{\e,\a}
\end{aligned}
\end{equation}
with stationary initial values, and products
\begin{equation*}
\begin{aligned}
H_\Bo^{\e,\b\ga}&=\tfrac12 (\partial_xH_\Y^{\e,\b}\partial_xH_\Y^{\e,\ga}-C^{\e,\b\ga}),\\
H_\D^{\e,\b\ga}&=\partial_xH_\W^{\e,\b}\rs \partial_xH_\I^{\e,\ga}-D^{\e,\b\ga},\\
H_\Co^{\e,\b\ga}&=\partial_xH_\K^{\e,\b}\rs \partial_xH_\I^{\e,\ga}.
\end{aligned}
\end{equation*}
From this, we see that $h^\e := S_{\text{\rm KPZ}}(f(0),g(0),{\mathbb H}^\e)$ solves \eqref{3_eq:1-2} with
$$
B^{\e,\b\ga} = C^{\e,\b\ga}+2D^{\e,\b\ga}.
$$
Theorem \ref{3_thm:3} combined with the convergence of drivers ${\mathbb H}^\e$ to ${\mathbb H}$
shown in Theorem \ref{3_conv of driv} below proves Theorem \ref{3_thm:1.1}-(1).

We do similar arguments for the equation with $*\eta_2^\e=\fa^2(\e D)$ for the nonlinear term:
\begin{align}\label{3_eq:fa in sq}
\partial_t\tilde h^\a=\tfrac12\partial_x^2\tilde h^\a+\tfrac12\Ga_{\b\ga}^\a\fa^2(\e D)(\partial_x\tilde h^\b\partial_x\tilde h^\ga)+\si_\b^\a\xi^\b.
\end{align}
Then for the formal expansion $\tilde h^{\a} = \sum_{k=0}^\infty a^k \tilde h_k^{\a}$, we have
\begin{equation*}
\begin{aligned}
& \mathcal{L} \tilde h_0^\a = \si^\a_\b \xi^\b, \\
& \mathcal{L} \tilde h_1^\a = \tfrac12 \Ga_{\b\ga}^\a 
\fa^2(\e D)(\partial_x \tilde h^\b_0 \partial_x \tilde h^\ga_0), \\
& \mathcal{L} \tilde h_2^\a = \Ga_{\b\ga}^\a 
\fa^2(\e D)(\partial_x \tilde h^\b_1 \partial_x \tilde h^\ga_0), \\
& \mathcal{L} \tilde h_3^\a = 
\tfrac12 \Ga_{\b\ga}^\a \fa^2(\e D)(\partial_x \tilde h^\b_1 \partial_x \tilde h^\ga_1
)
+\Ga_{\b\ga}^\a \fa^2(\e D)
(\partial_x \tilde h^\b_2 \partial_x \tilde h^\ga_0).
\end{aligned}
\end{equation*}
We can construct a solution map $h=S_{\text{\rm KPZ}}^\e(f(0),g(0),\mathbb{H})$ corresponding to the mollified equation \eqref{3_eq:fa in sq}, though the driver $\mathbb{H}$ satisfies $\mathcal{L}H_\K=\partial_x\fa^2(\e D)H_\I$. See Section 4 of \cite{Ho} for the scalar-valued case. Furthermore, we have the following convergence result.

\begin{theo}[Theorem 4.4 of \cite{Ho}]  \label{3_thm:5}
If $(f^\e(0),g^\e(0)) \to (f(0),g(0))$ in $(\mathcal{C}^{\mu+1})^d\times(\mathcal{C}^{2\mu+1})^d$ and 
${\mathbb H}^\e \to {\mathbb H}$ in $\mathcal{H}_{\rm KPZ}^\k$, 
then we have the convergence $S_{\text{\rm KPZ}}^\e(f^\e(0),g^\e(0),{\mathbb H}^\e)
\to S_{\text{\rm KPZ}}(f(0),g(0),{\mathbb H})$ in $C([0,T_{\text{\rm sur}}),(\mathcal{C}^{\k\wedge(\mu+1)\wedge(2\mu+1)})^d)$.
\end{theo}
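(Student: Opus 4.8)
The plan is to adapt the stability argument behind Theorem 4.4 of \cite{Ho} to the present $\R^d$-valued setting; conceptually nothing new happens, the only additional work being the bookkeeping of the indices $\a,\b,\ga$ and of the tensor $\Ga$. First I would record uniform-in-$\e$ a priori bounds for the mollified solution map $S_{\text{\rm KPZ}}^\e$. Since $\fa^2(\e D)$ is a Fourier multiplier with symbol bounded by $\|\fa\|_\infty^2$ independently of $\e\in(0,1]$, it acts on every Besov space $(\mathcal{C}^\a)^r$ with operator norm bounded uniformly in $\e$. Hence, rerunning the fixed-point argument of Theorem \ref{3_thm:3} with the extra multiplier in front of every nonlinear term produces a local existence time $T_*^\e$ and a bound on $\|(f^\e,g^\e)\|_{\mathcal{D}_{\text{\rm KPZ}}^{\la,\mu}([0,T_*^\e])}$ that depend on the data only through $\|f(0)\|$, $\|g(0)\|$ and $\$\mathbb{H}\$_T$, with constants not depending on $\e$; the blow-up alternative and the lower semicontinuity of the survival time in Theorem \ref{3_thm:3}-(2) likewise hold with $\e$-uniform constants. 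Combined with the hypothesis $(f^\e(0),g^\e(0),\mathbb{H}^\e)\to(f(0),g(0),\mathbb{H})$, this shows that for every fixed $T<T_{\text{\rm sur}}$ the $\e$-solutions exist on $[0,T]$ once $\e$ is small, with $\mathcal{D}_{\text{\rm KPZ}}^{\la,\mu}([0,T])$-norms bounded uniformly in $\e$.

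Next I would use the quantitative mollifier estimate: since $\fa(0)=1$ and $\fa\in\mathcal{S}(\R)$, one has $|\fa^2(x)-1|\le C|x|^\th$ for every $\th\in[0,1]$ and all $x\in\R$, whence
$$
\|(\fa^2(\e D)-\mathrm{Id})u\|_{(\mathcal{C}^{\a-\th})^r}\le C\e^\th\|u\|_{(\mathcal{C}^\a)^r},\qquad \th\in[0,1].
$$
Thus replacing $\fa^2(\e D)$ by the identity inside any nonlinear, paraproduct, resonant or commutator term costs a factor $O(\e^\th)$ multiplied by a norm of the (uniformly bounded) arguments.

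Then comes the difference estimate. Write $h^\e=H_\I^\e+H_\Y^\e+H_\W^\e+f^\e+g^\e$ and $h=H_\I+H_\Y+H_\W+f+g$, so that the driver differences $H_\I^\e-H_\I$, $H_\Y^\e-H_\Y$, $H_\W^\e-H_\W$ already tend to $0$ in the relevant spaces by $\$\mathbb{H}^\e-\mathbb{H}\$_T\to0$. For the remainders, subtract the two systems of type \eqref{3_eq:fg}: the $\e$-version carries $\fa^2(\e D)$ in front of each nonlinear and paracontrolled term and its $f^\e$ is paracontrolled by $H_\K^\e$ with $\mathcal{L}H_\K^\e=\partial_x\fa^2(\e D)H_\I^\e$, while the limit system is paracontrolled by $H_\K$ with $\mathcal{L}H_\K=\partial_xH_\I$. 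Group the right-hand side of the resulting equation for $(f^\e-f,g^\e-g)$ into (i) terms linear in $(f^\e-f,g^\e-g)$ with coefficients built from uniformly bounded data, (ii) terms linear in the driver differences and in $f^\e(0)-f(0)$, $g^\e(0)-g(0)$, and (iii) error terms of the form $(\fa^2(\e D)-\mathrm{Id})(\cdots)$ times bounded quantities. The contraction estimates of \cite{Ho} in the norm $\mathcal{D}_{\text{\rm KPZ}}^{\la,\mu}$ then give, on a sufficiently short interval $[0,\tau]$ whose length is uniform in $\e$ by the first paragraph,
$$
\|(f^\e-f,g^\e-g)\|_{\mathcal{D}_{\text{\rm KPZ}}^{\la,\mu}([0,\tau])}\le C\big(\|f^\e(0)-f(0)\|+\|g^\e(0)-g(0)\|+\$\mathbb{H}^\e-\mathbb{H}\$_T+\e^\th\big)\longrightarrow0.
$$
Iterating over a finite partition of $[0,T]$ --- using the uniform lower bounds on the local times and the continuity of the flow to restart at each subinterval with data that still converge --- and combining with the convergence of the drivers yields $h^\e\to h$ in $C([0,T],(\mathcal{C}^{\k\wedge(\mu+1)\wedge(2\mu+1)})^d)$ for every $T<T_{\text{\rm sur}}$, which is the claim.

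The main obstacle is the difference estimate, and inside it the consistent treatment of the paracontrolled structure: $f^\e$ and $f$ are expanded along different paracontrolled ``derivatives'' (namely $H_\K^\e$ versus $H_\K$), and the resonant products $\partial_xf\rs\partial_xH_\I$ exist only through the commutator trick, so the difference of two such resonant terms must be decomposed using commutator estimates while simultaneously exploiting both the $\e$-uniform boundedness and the $O(\e^\th)$ convergence of $\fa^2(\e D)$. This is precisely the bookkeeping carried out in Section 4 of \cite{Ho} for $d=1$; here it is redone keeping track of the indices and of $\Ga$, but no new analytic difficulty appears.
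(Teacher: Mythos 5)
Your proposal follows essentially the same route as the cited proof (Section 4 of \cite{Ho}): uniform-in-$\e$ a priori estimates for the mollified fixed-point map exploiting $\sup_\e\|\fa^2(\e D)\|_{\mathcal{C}^\a\to\mathcal{C}^\a}<\infty$, the quantitative mollifier bound $\|(\fa^2(\e D)-\mathrm{Id})u\|_{\mathcal{C}^{\a-\th}}\lesssim\e^\th\|u\|_{\mathcal{C}^\a}$ (Lemma 2.5 of \cite{Ho}), a difference estimate in $\mathcal{D}_{\text{\rm KPZ}}^{\la,\mu}$ that carefully separates the paracontrolled expansions along $H_\K^\e$ versus $H_\K$, and an iteration over subintervals using the blow-up alternative and lower semicontinuity of $T_{\text{\rm sur}}$ from Theorem \ref{3_thm:3}-(2). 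The paper does not reprove this statement but quotes it directly from \cite{Ho}, and your sketch correctly identifies both the structure of that argument and the fact that the passage from $d=1$ to general $d$ is purely notational.
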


By replacing $\xi^\b$ by $\xi^{\e,\b}$ in \eqref{3_eq:fa in sq}
and introducing the renormalization factors $- c^\e A^{\b\ga}, \tilde C^{\e,\b\ga}, \tilde D^{\e,\b\ga}$, we again obtain the renormalized driver $\tilde{\mathbb{H}}^\e$ corresponding to the approximating equation \eqref{3_eq:1}, which is defined by the solutions of
\begin{equation}\label{3_eq:2.5}
\begin{aligned}
\mathcal{L} \tilde H_\I^{\e,\a} &= \si^\a_\b \partial_x\xi^{\e,\b}, \\
\mathcal{L} \tilde H_\Y^{\e,\a} &= \tfrac12 \Ga_{\b\ga}^\a 
\fa^2(\e D)(\partial_x\tilde H_\I^{\e,\b} \partial_x\tilde H_\I^{\e,\ga}-c^\e A^{\b\ga}), \\
\mathcal{L} \tilde H_\W^{\e,\a} &= \Ga_{\b\ga}^\a 
\fa^2(\e D)(\partial_x\tilde H_\Y^{\e,\b} \partial_x\tilde H_\I^{\e,\ga}),\\
\mathcal{L} \tilde H_\K^{\e,\a} &=\partial_x\fa^2(\e D)\tilde H_\I^{\e,\a}
\end{aligned}
\end{equation}
with stationary initial values, and products
\begin{equation*}
\begin{aligned}
\tilde H_\Bo^{\e,\b\ga}&=\tfrac12 (\partial_x\tilde H_\Y^{\e,\b}\partial_x\tilde H_\Y^{\e,\ga}-\tilde C^{\e,\b\ga}),\\
\tilde H_\D^{\e,\b\ga}&=\partial_x\tilde H_\W^{\e,\b}\rs \partial_x\tilde H_\I^{\e,\ga}-\tilde D^{\e,\b\ga},\\
\tilde H_\Co^{\e,\b\ga}&=\partial_x\tilde H_\K^{\e,\b}\rs \partial_x\tilde H_\I^{\e,\ga}.
\end{aligned}
\end{equation*}
From this, we see that 
$\tilde h^\e  := S_{\text{\rm KPZ}}^\e(f(0),g(0),\tilde{\mathbb H}^\e)$
solves \eqref{3_eq:1} with
$$
\tilde B^{\e,\b\ga}= \tilde C^{\e,\b\ga}+ 2\tilde D^{\e,\b\ga}.
$$
Theorem \ref{3_thm:5} combined with the convergence of drivers 
$\tilde{\mathbb H}^\e$ to ${\mathbb H}$ shown in Theorem \ref{3_conv of driv}
proves Theorem \ref{3_thm:1.1}-(2).


\section{Computation of renormalization factors}\label{3_section 3}

\subsection{Product formula}

We first prepare the product formula to compute the Wiener chaos expansions
of the products of two multiple Wiener-It\^o integrals.

Let $\{\xi^{\b,k}(s)\}_{\b\in\{1,\ldots,d\},k\in\Z}$ be complex-valued Gaussian white noises on $\R$ which satisfy $\overline{\xi^{\b,k}(s)}=\xi^{\b,-k}(s)$ and have the covariance structure \eqref{3_eq:xicov}, which are realized on a probability space $(\Omega,\mathcal{F},P)$, where $\mathcal{F}$ is a $\sigma$-field generated by $\{\xi^{\beta,k}(\mathbf{1}_{(s,t]})\}_{\beta,k,s<t}$.
The Hilbert space $\mathcal{H}=L^2(\Omega, \mathcal{F},P)$ can be decomposed into the direct sum $\oplus_\bn \mathcal{H}_{\bn}$, where $\bn = (n_{\b,k})\in \Z_{\ge0}^{\{1,\ldots,d\}\times\Z}$ satisfies $|\bn| :=\sum n_{\b,k} <\infty$ and $\Z_{\ge0}= \N \cup\{0\}$.
For $f\Big( \big((s_i^{\b,k})_{i=1}^{n_{\b,k}}\big)_{\b,k}\Big) \in\mathcal{H}_{\bn}:= L^2(\R^{|\bn|},ds_\bn)$ with $ds_\bn = \prod_{\b,k}\prod_{i=1}^{n_{\b,k}}ds_i^{\b,k}$, we define multiple Wiener-It\^o integrals
$$
I_{\bn}(f) = \int f\Big( \big((s_i^{\b,k})_{i=1}^{n_{\b,k}}\big)_{\b,k}\Big)
\prod_{\b,k}\prod_{i=1}^{n_{\b,k}} \xi^{\b,k}(ds_i^{\b,k}).
$$
Note that we don't divide by $\bn! = \prod_{\b,k}n_{\b,k}!$ in the definition of $I_{\bn}(f)$ compared with \cite{Ma}.

For given $\bn=(n_{\b,k})$ and $\bm=(m_{\b,k})$, a diagram $\la\subset\coprod_{\b,k}\{1,\ldots,n_{\b,k}\}\times\coprod_{\b,k}\{1,\ldots,m_{\b,k}\}$ consists of disjoint pairs $(i_{\b,k},j_{\b,-k})$ connecting $i_{\b,k}\in\{1,\ldots,n_{\b,k}\}$ and $j_{\b,-k}\in\{1,\ldots,m_{\b,-k}\}$. Note that $\b$ are common and $k$ have opposite signs in these pairs. The set of all possible diagrams $\{\la\}$ is denoted by $\Ga(\bn,\bm)$. For $\la\in \Ga(\bn,\bm)$, we denote $\bar\la = (\bar\la_{\b,k})$, where $\bar\la_{\b,k}= \sharp\{i_{\b,k} \in \la\}+\sharp\{j_{\b,k} \in \la\}$ for each $\b, k$. Then $\bn+\bm -\bar\la$ is defined componentwisely by $(\bn+\bm -\bar\la)_{\b,k} = n_{\b,k}+m_{\b,k}-\bar\la_{\b,k}$. For $f_1\in\mathcal{H}_\bn$ and $f_2\in\mathcal{H}_\bm$, we define $f_\la\in \mathcal{H}_{\bn+\bm -\bar\la}$ by
$$
f_\la(s_\bn\sqcup s_\bm\setminus s_\la) = \int f_1(s_\bn) f_2(\check{s}_\bm)
\prod_{i_{\b,k}\in \la} ds_{i_{\b,k}}^{\b,k },
$$
where $s_\bn\sqcup s_\bm\setminus s_\la=\big((s_{i_{\b,k}}^{\b,k})_{i_{\b,k}\notin\la}\sqcup(s_{j_{\b,k}}^{\b,k})_{j_{\b,k}\notin\la}\big)_{\b,k}$, and $\check{s}_\bm$ is defined by $s_\bm$ with $s_{j_{\b, -k}}$ replaced by
$s_{i_{\b,k}}$ when the pair $(i_{\b,k}, j_{\b, -k})$ appears in $\la$.
Then we have the following product formula; see Theorem 5.3 of \cite{Ma} with $m=2$ shown in a slightly different setting from ours.

\begin{prop} \label{3_prop:prod}
For $f_1 \in \mathcal{H}_\bn, f_2 \in \mathcal{H}_\bm$, we have
$$
I_{\bn}(f_1) I_{\bm}(f_2) = \sum_{\la\in \Ga(\bn,\bm)} I_{\bn+\bm -\bar\la}(f_\la).
$$
\end{prop}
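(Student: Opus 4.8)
The plan is to prove Proposition~\ref{3_prop:prod} by reducing it to the well-known product formula for multiple Wiener-It\^o integrals over a single Gaussian Hilbert space, keeping careful track of the fact that our integrals $I_\bn$ carry no normalizing factor $\bn!$. The underlying Gaussian family is $\{\xi^{\b,k}\}$, indexed by $(\b,k)\in\{1,\ldots,d\}\times\Z$, with the covariance structure \eqref{3_eq:xicov}: the real and imaginary parts generate a real Gaussian Hilbert space $\mathfrak{H}$, and the constraint $\overline{\xi^{\b,k}}=\xi^{\b,-k}$ together with $E[\xi^{\b,k_1}(t)\xi^{\ga,k_2}(s)]=\de^{\b\ga}\de(t-s)1_{\{k_1+k_2=0\}}$ means that the only nonzero pairings are between $\xi^{\b,k}$ and $\xi^{\b,-k}$. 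First I would record this contraction rule precisely: for elementary ``columns'' $s_i^{\b,k}$ appearing in $I_\bn(f_1)$ and $s_j^{\b',k'}$ appearing in $I_\bm(f_2)$, the expectation of the product of the two elementary noises is nonzero only when $\b=\b'$ and $k'=-k$, producing the Dirac mass $\de(s_i^{\b,k}-s_j^{\b,-k})$. This is exactly why a diagram $\la\in\Ga(\bn,\bm)$ is defined to connect $i_{\b,k}$ with $j_{\b,-k}$, with $\b$ common and the sign of $k$ reversed.

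Next, the core computation: expand the product $I_\bn(f_1)I_\bm(f_2)$ using the standard multiplication theorem (e.g.\ Theorem~5.3 of \cite{Ma} with $m=2$, cited in the excerpt). The classical statement, phrased for integrals normalized by $\bn!$, expresses the product as a sum over ways of contracting some of the ``first'' variables against some of the ``second'' variables; after any chosen set of contractions one integrates out the paired variables, substituting each paired second-variable by its matched first-variable (this is precisely the construction of $\check s_\bm$ and of $f_\la$ in the excerpt), and the remaining variables index a multiple integral of the appropriate order $\bn+\bm-\bar\la$. The only bookkeeping subtlety is the combinatorial constant: in the normalized convention each diagram of a given ``shape'' carries a multinomial coefficient counting how many concrete pairings realize that shape. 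In our un-normalized convention $I_\bn(f)=\int f\,\prod\xi(ds)$ the integrand already ranges over all ordered tuples $(s_i^{\b,k})$, so summing over all concrete diagrams $\la\in\Ga(\bn,\bm)$ (rather than over shapes with multiplicities) exactly absorbs those coefficients. I would verify this by comparing the two conventions on a low-order case — say $\bn=\bm=(1)$ in a single mode, where $I_1(f_1)I_1(f_2)=I_2(f_1\otimes f_2)+\int f_1f_2\,ds$ — and then argue the general case by the same counting.

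Concretely, the steps in order would be: (i) fix $f_1\in\mathcal H_\bn$, $f_2\in\mathcal H_\bm$ and, by density and multilinearity, reduce to the case where $f_1,f_2$ are (finite sums of) tensor products of indicator functions of disjoint intervals, so that all manipulations are with honest finite sums and ordinary Lebesgue integrals; (ii) multiply out and apply the diagram/Feynman-type expansion for moments of products of Gaussian elementary integrals, which groups terms according to which elementary ``first'' variables get paired with which ``second'' ones — these groupings are indexed exactly by $\Ga(\bn,\bm)$ because of the sign-flip pairing rule from step~one; (iii) in each group, carry out the paired integrations to obtain $f_\la$ as defined, with the substituted variables $\check s_\bm$, and recognize the remaining integral against the un-paired noises as $I_{\bn+\bm-\bar\la}(f_\la)$; (iv) sum over $\la$ and pass back to general $f_1,f_2$ by density, noting that the map $(f_1,f_2)\mapsto f_\la$ is bounded $\mathcal H_\bn\times\mathcal H_\bm\to\mathcal H_{\bn+\bm-\bar\la}$ (Cauchy--Schwarz in the paired variables) so both sides are continuous. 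The main obstacle — really the only one — is bookkeeping: matching the diagram-counting in our un-normalized convention against the multiplicities in the cited normalized version, and making sure the sign-reversal $k\mapsto -k$ in the pairing is threaded consistently through the definition of $\check s_\bm$ and $f_\la$; once the convention dictionary is fixed, the statement follows directly from Theorem~5.3 of \cite{Ma}.
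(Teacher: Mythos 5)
Your proposal is correct and follows the same route the paper takes: the paper itself offers no proof beyond the citation to Theorem~5.3 of \cite{Ma} with $m=2$, noting only that Major's result is in ``a slightly different setting from ours,'' and your write-up is precisely the dictionary (complex pairing $k\leftrightarrow -k$, absence of the $\bn!$ normalization so that summing over concrete diagrams rather than shapes absorbs the multiplicities, plus the usual density/boundedness argument) that translates Major's statement into the form asserted in Proposition~\ref{3_prop:prod}.
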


\subsection{Definition of driving processes}\label{3_section3.2}

Here we precisely define the components of $\mathbb{H}^\e$ and $\tilde{\mathbb H}^\e$. Now we write $\mathcal{H}_n=\oplus_{|\bn|=n}I_\bn(\mathcal{H}_\bn)$.

We define $I(\zeta)\equiv I(\zeta)(t,x)$ for a noise $\zeta=\{\zeta(s,y);s\in\R,y\in\T\}$ by
$$
I(\zeta)(t,x)  
= \int_{-\infty}^t \int_\T p(t-s,x,y) \Pi_0^\bot\zeta(s,y)dsdy
+\int_0^t \Pi_0\zeta(s)ds.
$$
Here $\Pi_0$ is the orthogonal projection of $L^2(\T)$ onto the set of constant functions, 
and $\Pi_0^\bot=1-\Pi_0$. $I(\zeta)$ is a solution of $\mathcal{L}I(\zeta)=\zeta$ with stationary initial value for non-zero Fourier modes but zero initial value for the zero mode. Note that $\partial_x I(\zeta)$ is a stationary process since
$$
\partial_x I(\zeta)(t,x)
=\int_{-\infty}^t \int_\T \partial_xp(t-s,x,y) \zeta(s,y)dsdy.
$$

By taking the smeared noise $\xi^{\e,\b}$, which is extended for $t\in \R$, set
\begin{align}\label{3_eq:def of HI}
& H_\I^{\e,\a}=\tilde H_\I^{\e,\a} := \si_\b^\a I(\xi^{\e,\b}) \in {\mathcal H}_1.
\end{align}
Note that this solves the first equations of \eqref{3_eq:2.4} and \eqref{3_eq:2.5}. This converges to the process
$H_\I^\a=\si_\b^\a I(\xi^\b)$ as $\e\downarrow 0$.

We define
\begin{equation}\label{3_eq:def of HY}
\begin{aligned}
&H_\Y^{\e,\a} :=
\tfrac12\Ga_{\b\ga}^\a I(\partial_xH_\I^{\e,\b}\partial_xH_\I^{\e,\ga}-c^\e A^{\b\ga}) \in {\mathcal H}_2,\\
&\tilde H_\Y^{\e,\a} :=
\tfrac12\Ga_{\b\ga}^\a \fa^2(\e D)I(\partial_xH_\I^{\e,\b}\partial_xH_\I^{\e,\ga}-c^\e A^{\b\ga}) \in {\mathcal H}_2.
\end{aligned}
\end{equation}
These solve the second equations of \eqref{3_eq:2.4} and \eqref{3_eq:2.5}, respectively.
Note that $\partial_xI(\xi^{\e,\b})$ is stationary in $t$, so that the $\mathcal{H}_0$-components of
$H_\Y^{\e,\a}$ and $\tilde H_\Y^{\e,\a}$ are compensated by $c^\e A^{\b\ga}$.

We define
\begin{equation}\label{3_eq:def of HW}
\begin{aligned}
&H_\W^{\e,\a}:=
\Ga_{\b\ga}^\a I(\partial_xH_\Y^{\e,\b}\partial_xH_\I^{\e,\ga})
\in {\mathcal H}_3\oplus \mathcal{H}_1,\\
&\tilde H_\W^{\e,\a}:=
\Ga_{\b\ga}^\a \fa^2(\e D)I(\partial_x\tilde H_\Y^{\e,\b}\partial_xH_\I^{\e,\ga})
\in {\mathcal H}_3\oplus \mathcal{H}_1.
\end{aligned}
\end{equation}
These solve the third equations of \eqref{3_eq:2.4} and \eqref{3_eq:2.5}, respectively. We can also define
\begin{equation*}
\begin{aligned}
&H_\Bo^{\e,\b\ga}=\tfrac12(\partial_xH_\Y^{\e,\b}\partial_xH_\Y^{\e,\ga}-C^{\e,\b\ga})
\in\mathcal{H}_4\oplus\mathcal{H}_2,\\
&\tilde H_\Bo^{\e,\b\ga}=\tfrac12(\partial_x\tilde H_\Y^{\e,\b}\partial_x\tilde H_\Y^{\e,\ga}-\tilde C^{\e,\b\ga})
\in\mathcal{H}_4\oplus\mathcal{H}_2,\\
&H_\D^{\e,\b\ga}=\partial_xH_\W^{\e,\b}\rs\partial_xH_\I^{\e,\ga}-D^{\e,\b\ga}
\in\mathcal{H}_4\oplus\mathcal{H}_2,\\
&\tilde H_\D^{\e,\b\ga}=\partial_x\tilde H_\W^{\e,\b}\rs\partial_xH_\I^{\e,\ga}-\tilde D^{\e,\b\ga}
\in\mathcal{H}_4\oplus\mathcal{H}_2,
\end{aligned}
\end{equation*}
by subtracting the corresponding $\mathcal{H}_0$-components.

We further define
\begin{equation*}
\begin{aligned}
&H_\K^{\e,\a} := I(\partial_xH_\I^{\e,\a})
\in \mathcal{H}_1,\\
&\tilde H_\K^{\e,\a} := \fa^2(\e D)I(\partial_xH_\I^{\e,\a})
\in \mathcal{H}_1,
\end{aligned}
\end{equation*}
and
\begin{equation*}
\begin{aligned}
& H_\Co^{\e,\b\ga} := 
\partial_x H_\K^{\e,\b} \rs \partial_x H^{\e,\ga}_\I
\in {\mathcal H}_2,\\
& \tilde H_\Co^{\e,\b\ga} :=
\partial_x \tilde H_\K^{\e,\b} \rs \partial_x H^{\e,\ga}_\I
\in {\mathcal H}_2.
\end{aligned}
\end{equation*}
Note that the $\mathcal{H}_0$-components vanish because the function $\fa=\mathcal{F}\eta$ is even.

The following result is shown in a similar way to Theorem 5.1 of \cite{Ho}.

\begin{theo}\label{3_conv of driv}
There exists an $\mathcal{H}_{\text{\rm KPZ}}^\k$-valued random variable $\mathbb{H}$ such that, for every $T>0$ and $p\ge1$,
$$
E\$\mathbb{H}\$_T^p<\infty,\quad
\lim_{\e\downarrow0}E\$\mathbb{H}^\e-\mathbb{H}\$_T^p=
\lim_{\e\downarrow0}E\$\tilde{\mathbb{H}}^\e-\mathbb{H}\$_T^p=0.
$$
In particular, both $h^\e=S_{\text{\rm KPZ}}(f(0),g(0),\mathbb{H}^\e)$ and $\tilde h^\e=S_{\text{\rm KPZ}}^\e(f(0),g(0),\tilde{\mathbb{H}}^\e)$ converge to $h=S_{\text{\rm KPZ}}(f(0),g(0),\mathbb{H})$ in probability 
as $\e\downarrow 0$ in $C([0,T_{\text{\rm sur}}),(\mathcal{C}^{\k\wedge(\mu+1)\wedge(2\mu+1)})^d)$.
\end{theo}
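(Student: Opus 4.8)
The plan is to follow the standard construction of the enhanced noise in paracontrolled calculus, adapting the scalar argument of Theorem 5.1 of \cite{Ho} to the $d$-component setting. Each component of $\mathbb{H}^\e$ and $\tilde{\mathbb H}^\e$ is built from $H_\I^{\e,\a}=\si_\b^\a I(\xi^{\e,\b})$ by finitely many operations — the solution operator $I$, spatial derivatives, products, the Fourier multipliers $\fa(\e D),\fa^2(\e D)$, and the resonant product $\rs$ — so, as recorded in the displays of Section \ref{3_section3.2}, each component lies in a fixed finite sum of Wiener chaoses $\mathcal{H}_{\le n}$. Using the product formula (Proposition \ref{3_prop:prod}) one expresses each component, level by level, as a multiple Wiener--It\^o integral with an explicit kernel given by a sum over diagrams; the renormalization constants $c^\e A^{\b\ga}$, $C^{\e,\b\ga}$, $D^{\e,\b\ga}$ (resp.\ $c^\e A^{\b\ga}$, $\tilde C^{\e,\b\ga}$, $\tilde D^{\e,\b\ga}$) are precisely the $\mathcal{H}_0$-projections of the corresponding products, so after their subtraction every kernel has vanishing zero-chaos component.

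The core estimate is a uniform-in-$\e$ bound on the second moments of Littlewood--Paley blocks. For a generic component $X^\e$ of $\mathbb{H}^\e$, lying in $\mathcal{H}_{\le n}$ with target Besov exponent $\rho\in\{\k,2\k,\k+1,2\k-1\}$, one shows, for some $b>0$ and uniformly in $\e,\e'\in(0,1]$,
$$
E\,\big|\Delta_j X^\e(t,x)\big|^2\lesssim 2^{-2j\rho},\qquad
E\,\big|\Delta_j(X^\e-X^{\e'})(t,x)\big|^2\lesssim (\e\vee\e')^{2b}\,2^{-2j(\rho-b)},
$$
together with the analogous bounds for the time increments $X^\e(t,x)-X^\e(s,x)$ carrying an extra positive power of $|t-s|$ dictated by the $C^{1/4}$-type norms in $\mathcal{H}_{\text{\rm KPZ}}^\k$. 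These are obtained on the Fourier side from the covariance \eqref{3_eq:xicov}, the regularity gain of the operator $I$, and the elementary facts $|\fa(\th)|\lesssim1$, $\fa(0)=1$, $|1-\fa(\e\th)|\lesssim(\e|\th|)^{b}\wedge1$. Gaussian hypercontractivity (equivalence of all moments within a fixed chaos) promotes these $L^2$-bounds to $L^p$-bounds for every $p\ge1$ with the same exponents, and Besov embedding in $x$ combined with a Kolmogorov-type argument in $t$ (as in \cite{GIP,Ho}) then yields $E\$\mathbb{H}^\e\$_T^p\lesssim1$ and the Cauchy property of $(\mathbb{H}^\e)_\e$ in $L^p(\Omega;\mathcal{H}_{\text{\rm KPZ}}^\k)$. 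The limit $\mathbb{H}$ is the random variable whose chaos kernels are the $\e\downarrow0$ limits of the subtracted kernels; it is a genuine driver because $\mathcal{L}H_\K=\partial_xH_\I$ survives the limit. For $\tilde{\mathbb H}^\e$ the argument is identical once the extra multipliers $\fa^2(\e D)$ — uniformly bounded, converging strongly to the identity — are inserted and $\tilde C^{\e,\b\ga},\tilde D^{\e,\b\ga}$ are adjusted accordingly; since both families share the same limiting kernels, $\lim_{\e\downarrow0}E\$\mathbb{H}^\e-\mathbb{H}\$_T^p=\lim_{\e\downarrow0}E\$\tilde{\mathbb H}^\e-\mathbb{H}\$_T^p=0$.

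The ``in particular'' assertion then follows from the continuity of the deterministic solution maps $S_{\text{\rm KPZ}}$ and $S^\e_{\text{\rm KPZ}}$ (Theorems \ref{3_thm:3} and \ref{3_thm:5}) together with the lower semicontinuity of the survival time: the $L^p$-convergence of the drivers forces $h^\e=S_{\text{\rm KPZ}}(f(0),g(0),\mathbb{H}^\e)$ and $\tilde h^\e=S^\e_{\text{\rm KPZ}}(f(0),g(0),\tilde{\mathbb H}^\e)$ to converge to $h=S_{\text{\rm KPZ}}(f(0),g(0),\mathbb{H})$ in probability in $C([0,T_{\text{\rm sur}}),(\mathcal{C}^{\k\wedge(\mu+1)\wedge(2\mu+1)})^d)$, precisely in the sense described in the Remark following Theorem \ref{3_thm:1.1}.

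I expect the main obstacle to be the uniform-in-$\e$ control of the genuinely singular objects — the resonant products $\partial_xH_\W^{\e,\b}\rs\partial_xH_\I^{\e,\ga}$ and $\partial_xH_\K^{\e,\b}\rs\partial_xH_\I^{\e,\ga}$ and the square $\tfrac12\partial_xH_\Y^{\e,\b}\partial_xH_\Y^{\e,\ga}$ — whose top-chaos kernels sit at the borderline exponent $2\k-1<0$, and whose lowest-chaos ($\mathcal{H}_0$) parts carry the $O(\log\e^{-1})$ divergences that must be cancelled exactly by $C^{\e,\b\ga}$ and $D^{\e,\b\ga}$. Tracking the tensor index structure through the diagram sums and verifying that the prescribed subtractions actually render the kernels convergent — the computation that feeds into Section \ref{3_section 4} — is the technical heart; the rest is a routine transcription of the scalar bounds of \cite{Ho} with the extra indices along for the ride.
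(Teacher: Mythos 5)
Your proposal is correct and takes essentially the same approach as the paper, which simply refers to the scalar argument of Theorem 5.1 of \cite{Ho}: explicit chaos kernels via the product formula, second-moment bounds on Littlewood--Paley blocks uniform in $\e$ with a positive-power gain in $\e\vee\e'$ for the differences, hypercontractivity to pass to $L^p$, Besov embedding plus Kolmogorov in time, and then the continuity of $S_{\rm KPZ}$ and $S_{\rm KPZ}^\e$ for the ``in particular'' conclusion. (The only minor imprecision is that the block bound should carry a Besov exponent strictly larger than the target $\rho$ to absorb the $1/(2p)$ loss in the embedding, but since $\k<\tfrac12$ is arbitrary this is harmless.)
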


\subsection{Derivation of $c^\e A$}\label{3_section 3.3}

In Sections \ref{3_section 3.3}-\ref{3_section 3.5}, we compute the precise values of renormalization factors. First we consider the $\mathcal{H}_0$-component of the product $\partial_xH_\I\partial_xH_\I$.

Recall that $H_\I^\e=\tilde H_\I^\e$ is given by \eqref{3_eq:def of HI}. Since $p(t,x,y)=\mathcal{F}^{-1}(e^{-2\pi^2k^2t})(x-y)$, we have
\begin{align*}
\Pi_0^\bot H_\I^{\e,\a}(t,x)
&=\si_\b^\a\sum_{k\neq0}\int_{-\infty}^te^{2\pi ikx}e^{-2\pi^2k^2(t-s)}\fa(\e k)\xi^{\b,k}(ds).
\end{align*}
Therefore,
\begin{equation}  \label{3_eq:dh}
\partial_x H_\I^{\e,\a}(t,x)
 = \sum_{k\neq0} \int \mathcal{K}_\I^{\e,\a}(t,x)_{\b,k}(s) 
\xi^{\b,k}(ds) \in \mathcal{H}_1,
\end{equation}
where
\begin{equation}  \label{3_eq:4.f}
\mathcal{K}_\I^{\e,\a}(t,x)_{\b,k}(s) = \si_\b^\a e^{2\pi ik x} \fa(\e k) (2 \pi ik)1_{\{t\ge s\}} e^{-2\pi^2k^2(t-s)}.
\end{equation}
By Proposition \ref{3_prop:prod}, the $\mathcal{H}_2$-component of $(\partial_x H_\I^{\e,\a_1}\partial_x H_\I^{\e,\a_2})(t,x)$ is given by Wiener-It\^o integral with the kernel
\begin{align}  \label{3_eq:4.4}
&\mathcal{K}_\V^{\e,\a_1\a_2}(t,x)_{(\b_1,k_1),(\b_2,k_2)}(s_1,s_2) \\
\notag &:= \mathcal{K}_\I^{\e,\a_1}(t,x)_{\b_1,k_1}(s_1) \mathcal{K}_\I^{\e,\a_2}(t,x)_{\b_2,k_2}(s_2)  \\
\notag&\;=\si_{\b_1}^{\a_1}\si_{\b_2}^{\a_2}e^{2\pi i(k_1+k_2)x}\fa(\e k_1)\fa(\e k_2)\\
\notag&\qquad\times(2 \pi ik_1)1_{\{t\ge s_1\}} e^{-2\pi^2k_1^2(t-s_1)}(2 \pi ik_2)1_{\{t\ge s_2\}} e^{-2\pi^2k_2^2(t-s_2)},
\end{align}
while its $\mathcal{H}_0$-component is given by
\begin{align}  \label{3_eq:4.5}
&\int_\T\mathcal{K}_\I^{\e,\a_1}(t,x)_{\b,k}(s) \mathcal{K}_\I^{\e,\a_2}(t,x)_{\b,-k}(s)ds\\
&\notag = \sum_{\b=1}^d\si_\b^{\a_1} \si_\b^{\a_2} \sum_{k\neq0} \fa^2(\e k) \int 
(2\pi i k) (-2\pi i k) \big(  1_{\{t\ge s\}} e^{-2\pi^2k^2 (t-s)} \big)^2 ds \\
& = A^{\a_1\a_2} \sum_{k\neq0} \fa^2(\e k) \int_0^{\infty}  
4\pi^2 k^2 e^{-4\pi^2k^2s} ds   \notag   \\
& = A^{\a_1\a_2} \sum_{k\not=0} \fa^2(\e k)
\equiv c^\e A^{\a_1\a_2},   \notag
\end{align}
note that $\fa(\e k) = \fa(-\e k)$.

\subsection{Derivation of $C^\e$}

Recall that $H^\e_\Y$ and $\tilde H^\e_\Y$ are given by \eqref{3_eq:def of HY}. Now we introduce the kernels
$$
H_t(k):=1_{\{k\neq0,t>0\}}e^{-2\pi^2k^2t},\quad
h_t(k):=(2\pi ik)H_t(k).
$$
Then for the function $F(t,x)=e^{2\pi ikx}f(t)$, we have the formula
$$
\partial_xI(F)=e^{2\pi ikx}\int_\R h_{t-u}(k)f(u)du.
$$
The following convolution formula is useful, cf. the proof of Lemma 6.11 of \cite{HQ}.

\begin{lemm}\label{3_int hh H}
$$
\int_\R h_{t-u}(k)h_{s-u}(-k)du=H_{|t-s|}(k).
$$
\end{lemm}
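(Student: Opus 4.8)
The plan is to compute the left-hand side directly using the definition $h_t(k) = (2\pi i k) H_t(k) = (2\pi i k)\mathbf{1}_{\{k\neq 0, t>0\}} e^{-2\pi^2 k^2 t}$. First I would note that the integrand vanishes unless $k\neq 0$ and both $t-u>0$ and $s-u>0$, i.e. unless $u < t\wedge s$; on that range we have
$$
h_{t-u}(k)h_{s-u}(-k) = (2\pi i k)(-2\pi i k)\, e^{-2\pi^2 k^2(t-u)} e^{-2\pi^2 k^2(s-u)} = 4\pi^2 k^2\, e^{-2\pi^2 k^2(t+s)} e^{4\pi^2 k^2 u}.
$$
So the integral becomes $4\pi^2 k^2 e^{-2\pi^2 k^2(t+s)}\int_{-\infty}^{t\wedge s} e^{4\pi^2 k^2 u}\,du$, which converges because $k\neq 0$.

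Next I would evaluate the elementary integral: $\int_{-\infty}^{t\wedge s} e^{4\pi^2 k^2 u}\,du = \frac{1}{4\pi^2 k^2} e^{4\pi^2 k^2 (t\wedge s)}$. Substituting back gives $e^{-2\pi^2 k^2(t+s)} e^{4\pi^2 k^2(t\wedge s)} = e^{-2\pi^2 k^2(t+s - 2(t\wedge s))} = e^{-2\pi^2 k^2 |t-s|}$, using the identity $t+s-2(t\wedge s) = |t-s|$. Hence the left-hand side equals $\mathbf{1}_{\{k\neq 0\}} e^{-2\pi^2 k^2|t-s|}$, which is exactly $H_{|t-s|}(k)$ (noting $|t-s|\geq 0$, and when $t=s$ one gets $\mathbf{1}_{\{k\neq 0\}} = H_0(k)$ with the convention that $H_0(k)$ means the $t\downarrow 0$ limit, consistent with the factor $\mathbf{1}_{\{k\neq 0, t>0\}}$ interpreted for $t\geq 0$). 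I would remark that the convention on the indicator at $t=0$ is harmless since this kernel is only integrated against other kernels.

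There is essentially no obstacle here — it is a one-line Gaussian-type (in fact purely exponential) convolution. The only point requiring minor care is bookkeeping the support conditions coming from the two indicators $\mathbf{1}_{\{t-u>0\}}$ and $\mathbf{1}_{\{s-u>0\}}$, whose intersection is $\{u < t\wedge s\}$, and then recognizing the algebraic identity $t+s-2\min(t,s)=|t-s|$. I would present the computation in the three displays above and conclude.
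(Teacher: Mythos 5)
Your proof is correct and follows essentially the same direct computation as the paper's: restrict to $u<t\wedge s$, pull out the constant factors, integrate the single exponential, and invoke the identity $t+s-2(t\wedge s)=|t-s|$. The only difference is cosmetic bookkeeping (you factor out $e^{-2\pi^2k^2(t+s)}$ before integrating and add a harmless remark on the $t=s$ boundary convention), so the two arguments are the same.
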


\begin{proof}
Since the integral is over $u<t\wedge s$, the left hand side is rewritten as
\begin{align*}
&(2\pi ik)(-2\pi ik)\int_{-\infty}^{t\wedge s}e^{-2\pi^2k^2\{(t-u)+(s-u)\}}du\\
&=e^{-2\pi^2k^2(t+s-2u)}\big|_{u=-\infty}^{t\wedge s}=e^{-2\pi^2k^2|t-s|}=H_{|t-s|}(k).
\end{align*}
Note that $t+s-2(t\wedge s)=|t-s|$.
\end{proof}

The kernels of $\partial_x H_\Y^{\e,\a},\partial_x\tilde H_\Y^{\e,\a}\in \mathcal{H}_2$ are given by
\begin{align*}
&\mathcal{K}_\Y^{\e,\a}(t,x)_{(\b_1,k_1),(\b_2,k_2)}(s_1,s_2)\\
&=\tfrac12\Ga_{\ga_1\ga_2}^\a \partial_xI(\mathcal{K}_\V^{\e,\ga_1\ga_2}(\cdot,\cdot)_{(\b_1,k_1),(\b_2,k_2)}(s_1,s_2))(t,x)\\
&=\tfrac12 E_{\b_1\b_2}^\a e^{2\pi i(k_1+k_2)x} \fa(\e k_1)\fa(\e k_2)\int
h_{t-u}(k_1+k_2) h_{u-s_1}(k_1)h_{u-s_2}(k_2) du,
\end{align*}
and
\begin{align*}
&\tilde{\mathcal{K}}_\Y^{\e,\a}(t,x)_{(\b_1,k_1),(\b_2,k_2)}(s_1,s_2)\\
&=\tfrac12 E_{\b_1\b_2}^\a e^{2\pi i(k_1+k_2)x} \fa(\e k_1)\fa(\e k_2)\fa^2(\e(k_1+k_2))\\
&\quad\times\int
h_{t-u}(k_1+k_2) h_{u-s_1}(k_1)h_{u-s_2}(k_2) du,
\end{align*}
respectively. Here $E_{\b_1\b_2}^\a := \Ga_{\ga_1\ga_2}^\a \si_{\b_1}^{\ga_1}\si_{\b_2}^{\ga_2}$.

Now we compute two expectations
\begin{align*}
C^{\e,\a_1\a_2}&=E[(\partial_xH_\Y^{\e,\a_1}\partial_xH_\Y^{\e,\a_2})(t,x)],\\
\tilde C^{\e,\a_1\a_2}&=E[(\partial_x\tilde H_\Y^{\e,\a_1}\partial_x\tilde H_\Y^{\e,\a_2})(t,x)].
\end{align*}
By Proposition \ref{3_prop:prod},
\begin{align*}
C^{\e,\a_1\a_2}&=2\sum_{\b_1,\b_2}\sum_{k_1,k_2}\iint\mathcal{K}_\Y^{\e,\a_1}(t,x)_{(\b_1,k_1),(\b_2,k_2)}(s_1,s_2)\\
&\quad\times\mathcal{K}_\Y^{\e,\a_2}(t,x)_{(\b_1,-k_1),(\b_2,-k_2)}(s_1,s_2)ds_1ds_2,
\end{align*}
and $\tilde C^{\e,\a_1\a_2}$ is obtained by replacing $\mathcal{K}_\Y^\e$ by $\tilde{\mathcal K}_\Y^\e$.
The factor $2$ comes from the symmetry of the kernels.

\begin{lemm}\label{3_lem:C}  We have
\begin{align*}
&C^{\e,\b\ga} 
= \frac{F^{\b\ga}}{4\pi^2}
\sum_{k_1,k_2}\hspace{-7mm}\phantom{\sum}^{^\ast}
 \frac{\fa^2(\e k_1) \fa^2(\e k_2)}{k_1^2+k_1k_2+k_2^2},\\
&\tilde C^{\e,\b\ga} 
= \frac{F^{\b\ga}}{4\pi^2}
\sum_{k_1,k_2}\hspace{-7mm}\phantom{\sum}^{^\ast}
 \frac{\fa^2(\e k_1) \fa^2(\e k_2)\fa^4(\e (k_1+k_2))}{k_1^2+k_1k_2+k_2^2},
\end{align*}
where
$$
F^{\b\ga} = \sum_{\ga_1,\ga_2} E_{\ga_1 \ga_2}^\b E_{\ga_1 \ga_2}^\ga
=\sum_{\ga_1,\ga_2}\Ga_{\de_1\de_2}^\b\Ga_{\de_3\de_4}^\ga\si_{\ga_1}^{\de_1}\si_{\ga_2}^{\de_2}\si_{\ga_1}^{\de_3}\si_{\ga_2}^{\de_4}.
$$
Here $\sum^\ast$ means the sum over $k_1,k_2$ such that $k_1,k_2,k_1+k_2\neq0$.
\end{lemm}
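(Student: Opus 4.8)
The plan is a direct Wiener-chaos computation starting from the displayed identity for $C^{\e,\a_1\a_2}$ just above (the one carrying the combinatorial factor $2$), into which I substitute the explicit kernel $\mathcal{K}_\Y^{\e,\a}$. Two simplifications are immediate. First, the second kernel carries the opposite Fourier modes $(-k_1,-k_2)$, so the phases $e^{2\pi i(k_1+k_2)x}$ cancel and the result is independent of $x$ (and, as will emerge, of $t$). Second, the $\b_1,\b_2$-coefficients combine as $\sum_{\b_1,\b_2}E_{\b_1\b_2}^{\a_1}E_{\b_1\b_2}^{\a_2}=F^{\a_1\a_2}$ by definition, while the smearing factors collect into $\fa^2(\e k_1)\fa^2(\e k_2)$ (for $\tilde C$, additionally $\fa^4(\e(k_1+k_2))$, since each factor $\tilde{\mathcal{K}}_\Y^\e$ carries an extra $\fa^2(\e(k_1+k_2))$); none of these factors touches a time integration, so they pull out. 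Hence the problem reduces to evaluating one scalar time integral per pair $(k_1,k_2)$.

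Next I would carry out, by Fubini, the $s_1$- and $s_2$-integrations first. Writing $u$ and $v$ for the internal convolution variables in $\mathcal{K}_\Y^{\e,\a_1}$ and $\mathcal{K}_\Y^{\e,\a_2}$ respectively, the $s_1$-integral is $\int_\R h_{u-s_1}(k_1)\,h_{v-s_1}(-k_1)\,ds_1$, which by Lemma \ref{3_int hh H} equals $H_{|u-v|}(k_1)$; likewise the $s_2$-integral equals $H_{|u-v|}(k_2)$. What then remains is
$$
J(k_1,k_2):=\iint_{u<t,\;v<t}h_{t-u}(k_1+k_2)\,h_{t-v}(-k_1-k_2)\,H_{|u-v|}(k_1)\,H_{|u-v|}(k_2)\,du\,dv ,
$$
a quantity which no longer depends on $t$ or $x$. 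Inserting the definitions of $h$ and $H$, the product of the two $h$-factors is $4\pi^2(k_1+k_2)^2 e^{-2\pi^2(k_1+k_2)^2(2t-u-v)}$ on $\{u,v<t\}$; after the substitution $a=t-u$, $b=t-v$, followed by $w=a+b$, $z=a-b$ (Jacobian $\tfrac12$), $J$ becomes a product of elementary one-dimensional exponential integrals. Evaluating them and using $(k_1+k_2)^2+k_1^2+k_2^2=2(k_1^2+k_1k_2+k_2^2)$ gives
$$
J(k_1,k_2)=\frac1{2\pi^2(k_1^2+k_1k_2+k_2^2)} .
$$

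Finally I would assemble the constants: the factor $2$ from the kernel symmetry together with the two factors $\tfrac12$ from the definition of $H_\Y^\e$ multiply to $2\cdot\tfrac14\cdot\tfrac1{2\pi^2}=\tfrac1{4\pi^2}$. The summands with $k_1=0$, $k_2=0$, or $k_1+k_2=0$ vanish identically (the relevant $h$- or $H$-kernel is then zero), so the sum is over $\sum^\ast$; moreover the Schwartz cutoff $\fa^2(\e\cdot)$ makes the otherwise logarithmically divergent double series absolutely convergent, which justifies the Fubini steps above (this finiteness is also part of Theorem \ref{3_conv of driv}). This yields the stated formula for $C^{\e,\b\ga}$, and the one for $\tilde C^{\e,\b\ga}$ follows verbatim, the only difference being the extra $u,v,s_1,s_2$-independent factor $\fa^4(\e(k_1+k_2))$.

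The only genuinely delicate step is the explicit evaluation of $J$: one must keep careful track of every multiplicative constant ($4\pi^2$ and $2\pi^2$, the combinatorial $2$, the two $\tfrac12$'s) and of the complex conjugation implicit in the opposite-mode kernels, and must deal with the kink of $|u-v|$ by splitting the integration domain (equivalently, via the $w,z$ substitution). Beyond this bookkeeping, no conceptual obstacle remains once Lemma \ref{3_int hh H} is available.
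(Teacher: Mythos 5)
Your proposal is correct and follows essentially the same route as the paper: substitute the explicit kernels $\mathcal{K}_\Y^\e$, pull out the $x$-independent smearing and tensor prefactors, integrate $s_1,s_2$ via Lemma~\ref{3_int hh H}, and reduce to the scalar double integral, whose value $1/\bigl(8\pi^4(k_1+k_2)^2(k_1^2+k_1k_2+k_2^2)\bigr)$ (equivalently your $J=1/\bigl(2\pi^2(k_1^2+k_1k_2+k_2^2)\bigr)$ after absorbing the $4\pi^2(k_1+k_2)^2$) combines with the prefactor $2\cdot\tfrac12\cdot\tfrac12$ to give the stated $F^{\b\ga}/4\pi^2$. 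The only cosmetic deviation is the $w=a+b,\ z=a-b$ substitution where the paper instead splits the $(r_1,r_2)$-domain into $r_1<r_2$ and its mirror; both are equivalent elementary bookkeeping.
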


\begin{rem}\label{3_rem:3.1}
When $d=1$, $C^{\epsilon,\beta\gamma}$ and $\tilde{C}^{\epsilon,\beta\gamma}$ coincide with $c^{\e,\Bo}$ and $\tilde c^{\e,\Bo}$, respectively, in \cite{Ho} with $E^\b_{\ga_1,\ga_2}=1$.
\end{rem}

\begin{rem}\label{rem:section3 graph and F}
The expression of the factor $F^{\b\ga}$ can be obtained by the following graphic rules. Each leaf of the graph ``\,\Bo" correspond to a label of a noise. When two noises are contracted, the two labels are equal. Each edge attached to a noise corresponds to the factor $\si_\b^\a$. Each vertex with the shape ``
\begin{tikzpicture}
\coordinate (A1) at (0,0);
\coordinate (A2) at ($0.6*(0.2,0.2)$);
\coordinate (A3) at ($0.6*(-0.2,0.2)$);
\coordinate (A4) at ($0.6*(0,-0.2)$);
\draw (A3)--(A1)--(A2);
\draw (A1)--(A4);
\end{tikzpicture}"
corresponds to the factor $\Ga_{\b\ga}^\a$. Indeed, we have
$$
\begin{tikzpicture}[baseline=20pt]
\coordinate (A1) at (0,0);
\coordinate (A2) at ($4*(-0.2,0.2)$);
\coordinate (A3) at ($4*(0.2,0.2)$);
\coordinate (A4) at ($4*(-0.3,0.4)$);
\coordinate (A5) at ($4*(-0.1,0.4)$);
\coordinate (A6) at ($4*(0.1,0.4)$);
\coordinate (A7) at ($4*(0.3,0.4)$);
\foreach \n in {4,5,6,7} \fill (A\n) circle (0.9pt);
\draw (A2)--(A1)--(A3);
\draw (A4)--(A2)--(A5);
\draw (A6)--(A3)--(A7);
\draw[dotted] (A4) to [out=30,in=150] (A6);
\draw[dotted] (A5) to [out=30,in=150] (A7);
\draw (A4) [fill=white] circle (4pt);
\node at (A4) {\tiny$\ga_1$};
\draw (A5) [fill=white] circle (4pt);
\node at (A5) {\tiny$\ga_2$};
\draw (A6) [fill=white] circle (4pt);
\node at (A6) {\tiny$\ga_1$};
\draw (A7) [fill=white] circle (4pt);
\node at (A7) {\tiny$\ga_2$};
\draw[white] ($(A2)!.5!(A1)$) [fill=white] circle (4pt);
\node at ($(A2)!.5!(A1)$) {\tiny$\b$};
\draw[white] ($(A3)!.5!(A1)$) [fill=white] circle (4pt);
\node at ($(A3)!.5!(A1)$) {\tiny$\ga$};
\draw[white] ($(A2)!.5!(A4)$) [fill=white] circle (4pt);
\node at ($(A2)!.5!(A4)$) {\tiny$\de_1$};
\draw[white] ($(A2)!.5!(A5)$) [fill=white] circle (4pt);
\node at ($(A2)!.5!(A5)$) {\tiny$\de_2$};
\draw[white] ($(A3)!.5!(A6)$) [fill=white] circle (4pt);
\node at ($(A3)!.5!(A6)$) {\tiny$\de_3$};
\draw[white] ($(A3)!.5!(A7)$) [fill=white] circle (4pt);
\node at ($(A3)!.5!(A7)$) {\tiny$\de_4$};
\end{tikzpicture}
=\sum_{\ga_1,\ga_2}\Ga_{\de_1\de_2}^\b\Ga_{\de_3\de_4}^\ga\si_{\ga_1}^{\de_1}\si_{\ga_2}^{\de_2}\si_{\ga_1}^{\de_3}\si_{\ga_2}^{\de_4}.
$$
\end{rem}

\begin{proof}
From Lemma \ref{3_int hh H},
\begin{align*}
C^{\e,\a_1\a_2}
&= \tfrac12\sum_{\b_1,k_1,\b_2,k_2} E_{\b_1 \b_2}^{\a_1} E_{\b_1 \b_2}^{\a_2} 
\fa^2(\e k_1) \fa^2(\e k_2) \\
&\quad\times\iint\!\!\!\iint
h_{t-u_1}(k_1+k_2) h_{u_1-s_1}(k_1)h_{u_1-s_2}(k_2)\\
& \quad\times h_{t-u_2}(-k_1-k_2) h_{u_2-s_1}(-k_1)h_{u_2-s_2}(-k_2) du_1du_2ds_1ds_2\\
&= \tfrac12F^{\a_1\a_2}\sum_{k_1,k_2\neq0}\fa^2(\e k_1) \fa^2(\e k_2) \\
&\quad\times 4\pi^2(k_1+k_2)^2\iint H_{t-u_1}(k_1+k_2)H_{t-u_2}(k_1+k_2)\\
&\quad\times H_{|u_1-u_2|}(k_1)H_{|u_1-u_2|}(k_2)du_1du_2.
\end{align*}
Note that the dependence in $x$ cancels.  Changing the variables
as $t-u_1=r_1, t-u_2=r_2$, the integral can be rewritten as
\begin{align}  \label{3_eq:4.10-b}
&\int_0^\infty  dr_1\int_0^\infty dr_2\ 
e^{-2\pi^2(k_1+k_2)^2(r_1+r_2)}e^{-2\pi^2(k_1^2+k_2^2)|r_1-r_2|}
\\
& =  \int_0^\infty dr_2\int_0^{r_2} dr_1\
e^{-2\pi^2(k_1+k_2)^2(r_1+r_2)}e^{-2\pi^2(k_1^2+k_2^2)(r_2-r_1)} \notag  \\
& \quad
+ (\text{a similar term with }k_1 \leftrightarrow k_2).
\notag
\end{align}
Since $k_1,k_2\not=0$, the first integral is equal to
\begin{align*}
&\int_0^\infty e^{-2\pi^2\{(k_1+k_2)^2+(k_1^2+k_2^2)\}r_2}dr_2
\int_0^{r_2} e^{-2\pi^2\{(k_1+k_2)^2-(k_1^2+k_2^2)\}r_1}dr_1\\
&=\frac{1}{2\pi^2\{(k_1+k_2)^2-(k_1^2+k_2^2)\}}\\
&\quad\times\int_0^\infty e^{-2\pi^2\{(k_1+k_2)^2+(k_1^2+k_2^2)\}r_2}
\big(1-e^{-2\pi^2\{(k_1+k_2)^2-(k_1^2+k_2^2)\}r_2}\big)dr_2\\
&=\frac{1}{4\pi^2k_1k_2}\int_0^\infty\big(e^{-2\pi^2\{(k_1+k_2)^2+(k_1^2+k_2^2)\}r_2}-e^{-4\pi^2(k_1+k_2)^2r_2}\big)dr_2\\
&=\frac{1}{4\pi^2k_1k_2}\bigg(\frac{1}{2\pi^2\{(k_1+k_2)^2+(k_1^2+k_2^2)\}}-\frac{1}{4\pi^2(k_1+k_2)^2}\bigg)\\
&=\frac{1}{4\pi^2k_1k_2}\frac{1}{2\pi^2}\frac{(k_1+k_2)^2-(k_1^2+k_2^2)}{2\{(k_1+k_2)^2+(k_1^2+k_2^2)\}(k_1+k_2)^2}\\
&=\frac{1}{4\pi^2k_1k_2}\frac{1}{2\pi^2}\frac{2k_1k_2}{4(k_1^2+k_1k_2+k_2^2)(k_1+k_2)^2}=\frac{1}{16\pi^4(k_1^2+k_1k_2+k_2^2)(k_1+k_2)^2}.
\end{align*}
By the symmetry under $k_1 \leftrightarrow k_2$, \eqref{3_eq:4.10-b} is equal to
$$
\frac{1}{8\pi^4(k_1^2+k_1k_2+k_2^2)(k_1+k_2)^2}.
$$
One can compute $\tilde C^{\e,\b\ga}$ similarly by noting that the kernel 
$\tilde{\mathcal{K}}_\Y^\e$ has an extra factor $\fa^2(\e(k_1+k_2))$ compared with
$\mathcal{K}_\Y^\e$.  This leads to the conclusion.
\end{proof}

\subsection{Derivation of $D^\e$}\label{3_section 3.5}

For the $\mathcal{H}_1$-component of $(\partial_x H_\Y^{\e,\a_1} \partial_x H_\I^{\e,\a_2})(t,x) \in \mathcal{H}_3 \oplus \mathcal{H}_1$, by Proposition \ref{3_prop:prod}, the kernel is given by
\begin{align*}
&\mathcal{K}_\Wc^{\e,\a_1\a_2}(t,x)_{\b,k}(s)\\
&=2\sum_{\b'=1}^d\sum_{k'\neq0}\int \mathcal{K}_\Y^{\e,\a_1}(t,x)_{(\b,k),(\b',k')}(s,s')\mathcal{K}_\I^{\e,\a_2}(t,x)_{\b',-k'}(s')ds'\\
&=e^{2\pi ikx}\fa(\e k)\sum_{\b'=1}^dE_{\b\b'}^{\a_1}\si_{\b'}^{\a_2}\sum_{k'\neq0}\fa^2(\e k')\\
&\quad\times\iint h_{t-u}(k+k')h_{u-s}(k)h_{u-s'}(k')h_{t-s'}(-k')duds'\\
&=e^{2\pi ikx}\fa(\e k)\sum_{\b'=1}^dE_{\b\b'}^{\a_1}\si_{\b'}^{\a_2}\sum_{k'\neq0}\fa^2(\e k')\int h_{t-u}(k+k')h_{u-s}(k)H_{t-u}(k')du.
\end{align*}
Note that $2$ in the first line comes from the symmetry of $\mathcal{K}_\Y^{\e,\a}(t,x)_{(\b_1,k_1),(\b_2,k_2)}(s_1,s_2)$ in $(\b_1,k_1,s_1)$ and $(\b_2,k_2,s_2)$.  We use Lemma \ref{3_int hh H} to have the last equality. Similarly, the $\mathcal{H}_1$-component of $(\partial_x \tilde H_\Y^{\e,\a_1}\partial_x H_\I^{\e,\a_2})(t,x)$ is given by the kernel
\begin{align*}
\tilde{\mathcal{K}}_\Wc^{\e,\a_1\a_2}(t,x)_{\b,k}(s)
&=e^{2\pi ikx}\fa(\e k)\sum_{\b'=1}^dE_{\b\b'}^{\a_1}\si_{\b'}^{\a_2}\sum_{k'\neq0}\fa^2(\e k')\fa^2(\e (k+k'))\\
&\quad\times\int h_{t-u}(k+k')h_{u-s}(k)H_{t-u}(k')du.
\end{align*}

Recall that $H_\W^{\e}$ and $\tilde H_\W^\e$ are given by \eqref{3_eq:def of HW}. For the $\mathcal{H}_1$-component of $\partial_xH_\W^{\e,\a}$ and $\partial_x\tilde H_\W^{\e,\a}$, the kernels are respectively given by
\begin{align*}
\mathcal{K}_\IWc^{\e,\a}(t,x)_{\b,k}(s)
&=\Ga_{\ga_1\ga_2}^\a\partial_xI(\mathcal{K}_\Wc^{\e,\ga_1\ga_2}(\cdot,\cdot)_{\b,k}(s))(t,x)\\
&=e^{2\pi ikx}\sum_{\b'=1}^d\Ga_{\ga_1\ga_2}^\a E_{\b\b'}^{\ga_1}\si_{\b'}^{\ga_2}\fa(\e k)\sum_{k'\neq0}\fa^2(\e k')\\
&\quad\times\iint h_{t-v}(k) h_{v-u}(k+k')h_{u-s}(k)H_{v-u}(k')dudv,
\end{align*}
and
\begin{align*}
\tilde{\mathcal{K}}_\IWc^{\e,\a}(t,x)_{\b,k}(s)
&=e^{2\pi ikx}\sum_{\b'=1}^d\Ga_{\ga_1\ga_2}^\a E_{\b\b'}^{\ga_1}\si_{\b'}^{\ga_2}\fa^3(\e k)\sum_{k'\neq0}\fa^2(\e k')\fa^2(\e(k+k'))\\
&\quad\times\iint h_{t-v}(k) h_{v-u}(k+k')h_{u-s}(k)H_{v-u}(k')dudv.
\end{align*}

Now we start to compute the expectations
\begin{align*}
D^{\e,\a_1\a_2}=E[(\partial_xH_\W^{\e,\a_1}\partial_xH_\I^{\e,\a_2})(t,x)],\\
\tilde D^{\e,\a_1\a_2}=E[(\partial_x\tilde H_\W^{\e,\a_1}\partial_xH_\I^{\e,\a_2})(t,x)].
\end{align*}
By Proposition \ref{3_prop:prod},  we have
\begin{align*}
D^{\e,\a_1\a_2}
&=\sum_{\b=1}^d\sum_{k\neq0}\int\mathcal{K}_\IWc^{\e,\a_1}(t,x)_{\b,k}(s)\mathcal{K}_\I^{\e,\a_2}(t,x)_{\b,-k}(s)ds,
\end{align*}
and $\tilde D^{\e,\a_1\a_2}$ is obtained by replacing $\mathcal K_\IWc^\e$ by $\tilde{\mathcal K}_\IWc^\e$.

\begin{lemm}\label{3_lem:D}  We have
\begin{align*}
D^{\e,\b\ga} 
&= -\frac{G^{\b\ga}}{4\pi^2}
\sum_{k_1,k_2}\hspace{-7mm}\phantom{\sum}^{^\ast}
 \frac{(k_1+k_2)\fa^2(\e k_1) \fa^2(\e k_2)}{k_1(k_1^2+k_1k_2+k_2^2)},\\
\tilde D^{\e,\b\ga} 
&= -\frac{G^{\b\ga}}{4\pi^2}
\sum_{k_1,k_2}\hspace{-7mm}\phantom{\sum}^{^\ast}
 \frac{(k_1+k_2)\fa^2(\e k_1) \fa^2(\e k_2)\fa^4(\e (k_1+k_2))}{k_1(k_1^2+k_1k_2+k_2^2)},
\end{align*}
where
$$
G^{\b\ga} = \sum_{\b_1,\b_2} \Ga_{\ga_1\ga_2}^{\b} E_{\b_1\b_2}^{\ga_1}\si_{\b_2}^{\ga_2}\si_{\b_1}^{\ga}
=\sum_{\b_1,\b_2}\Ga_{\ga_1\ga_2}^\b\Ga_{\de_1\de_2}^{\ga_1}\si_{\b_1}^{\de_1}\si_{\b_2}^{\de_2}\si_{\b_2}^{\ga_2}\si_{\b_1}^\ga.
$$
\end{lemm}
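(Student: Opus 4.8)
The argument parallels the proof of Lemma~\ref{3_lem:C}. Since $\partial_xH_\W^{\e,\a_1}\in\mathcal{H}_3\oplus\mathcal{H}_1$ and $\partial_xH_\I^{\e,\a_2}\in\mathcal{H}_1$, by Proposition~\ref{3_prop:prod} only the $\mathcal{H}_1$-component of $\partial_xH_\W^{\e,\a_1}$ contributes to the expectation, so that $D^{\e,\a_1\a_2}$ equals the contraction integral displayed just before the lemma, namely $\sum_\b\sum_{k\neq0}\int\mathcal{K}_\IWc^{\e,\a_1}(t,x)_{\b,k}(s)\,\mathcal{K}_\I^{\e,\a_2}(t,x)_{\b,-k}(s)\,ds$, and likewise for $\tilde D^{\e,\a_1\a_2}$ with $\mathcal{K}_\IWc$ replaced by $\tilde{\mathcal{K}}_\IWc$. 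The plan is then to substitute the explicit kernels recorded in this subsection and simplify in three steps.

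First, collecting the $\si$'s and $\Ga$'s carried by $\mathcal{K}_\IWc^{\e,\a_1}$ together with the factor $\si_\b^{\a_2}$ coming from $\mathcal{K}_\I^{\e,\a_2}$, and summing over the contracted index $\b$, produces precisely the tensor $G^{\b\ga}=\sum_{\b_1,\b_2}\Ga_{\ga_1\ga_2}^\b E_{\b_1\b_2}^{\ga_1}\si_{\b_2}^{\ga_2}\si_{\b_1}^\ga$; this is exactly the contraction one reads off the graph $\D$ by the graphical rule of Remark~\ref{rem:section3 graph and F}. At the same time the factors $e^{\pm2\pi ikx}$ cancel, so the result is independent of $x$. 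Second, after inserting the kernels the integrand carries the pair $h_{u-s}(k)\,h_{t-s}(-k)$; integrating over $s$ and invoking Lemma~\ref{3_int hh H} collapses this pair to $H_{|t-u|}(k)=H_{t-u}(k)$, leaving the double time integral $\iint_{u<v<t}h_{t-v}(k)\,h_{v-u}(k+k')\,H_{v-u}(k')\,H_{t-u}(k)\,du\,dv$, where $k'$ is the remaining internal Fourier mode.

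Third, the substitution $r_1=t-v$, $r_2=v-u\in(0,\infty)$ separates the exponents: the $r_1$-integral gives $\tfrac1{4\pi^2k^2}$, and, using $(k+k')^2+k'^2+k^2=2(k^2+kk'+k'^2)$, the $r_2$-integral gives $\tfrac1{4\pi^2(k^2+kk'+k'^2)}$; multiplying by the prefactor $(2\pi ik)(2\pi i(k+k'))=-4\pi^2k(k+k')$ yields $-\dfrac{(k+k')}{4\pi^2\,k\,(k^2+kk'+k'^2)}$. Renaming $(k,k')=(k_1,k_2)$, carrying along the surviving mollifier factor $\fa^2(\e k)\fa^2(\e k')$, and noting that the three Fourier modes must all be nonzero (whence $\sum^{*}$), one obtains the claimed expression for $D^{\e,\b\ga}$.

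For $\tilde D^{\e,\b\ga}$ the kernel $\tilde{\mathcal{K}}_\IWc$ differs from $\mathcal{K}_\IWc$ only through the additional spatial multipliers $\fa^2(\e D)$ appearing in \eqref{3_eq:2.5}, i.e.\ through the extra mollifier factors $\fa^2(\e k)$ and $\fa^2(\e(k+k'))$, while the time integral is untouched; the same computation, keeping track of these factors (and using that $\fa$ is even to rename the summation variables), gives the stated formula with the extra factor $\fa^4(\e(k_1+k_2))$. I expect the only genuine obstacle to be the bookkeeping: organizing the nested integration domain $s<u<v<t$, and, above all, tracking which Fourier mode occupies which slot, since the final integrand is asymmetric in $k_1$ and in $k_1+k_2$, so a wrong substitution silently changes the answer; getting the powers of $2\pi i$ and the overall sign right is the other place to be careful, but everything else is mechanical.
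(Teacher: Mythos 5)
Your proposal follows the paper's proof closely in structure: (i) only the $\mathcal{H}_1$-component of $\partial_xH_\W^{\e,\a_1}$ contributes to the $\mathcal{H}_0$-expectation; (ii) collecting the $\si$'s and $\Ga$'s yields $G^{\b\ga}$ and the $e^{\pm2\pi ikx}$ cancel; (iii) the $s$-integral is collapsed by Lemma~\ref{3_int hh H}, leaving a double time integral. Your change of variables $r_1=t-v$, $r_2=v-u$ is slightly nicer than the paper's $u'=t-u$, $v'=t-v$: it makes the double integral factorize immediately into two independent one-dimensional exponential integrals, whereas the paper's nested substitution requires a further iterated computation. Both end at $\frac1{16\pi^4 k_1^2(k_1^2+k_1k_2+k_2^2)}$ for the integral, and your bookkeeping of the prefactor $(2\pi ik_1)(2\pi i(k_1+k_2))$ and the constraints $k_1,k_2,k_1+k_2\ne 0$ is correct.

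The one place where your argument does not hold up is the final sentence about $\tilde D^{\e,\b\ga}$. You correctly identify that, compared with $\mathcal{K}_\IWc$, the kernel $\tilde{\mathcal{K}}_\IWc$ carries extra factors $\fa^2(\e k)$ and $\fa^2(\e(k+k'))$ (one from the $\fa^2(\e D)$ in the equation for $\tilde H_\W$ acting on the output mode $k$, one from the $\fa^2(\e D)$ in the equation for $\tilde H_\Y$ acting on the mode $k+k'$). With the identification $(k,k')=(k_1,k_2)$ this gives the product $\fa^4(\e k_1)\fa^2(\e k_2)\fa^2(\e(k_1+k_2))$, \emph{not} the product $\fa^2(\e k_1)\fa^2(\e k_2)\fa^4(\e(k_1+k_2))$ that appears in the lemma. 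You assert that "using that $\fa$ is even to rename the summation variables" converts one into the other, but no relabeling of the summation variables does this: the rational part $\frac{k_1+k_2}{k_1(k_1^2+k_1k_2+k_2^2)}$ is not symmetric under any permutation of the three modes $k_1,k_2,-(k_1+k_2)$, so moving the heavier $\fa$-power from the slot $k_1$ to the slot $k_1+k_2$ genuinely changes the summand. (You can check this directly, e.g.\ by trying the involution $(k_1,k_2)\mapsto(-(k_1+k_2),k_2)$, which sends $\frac{k_1+k_2}{k_1}$ to $\frac{k_1}{k_1+k_2}$.) In fact the paper's own proof is equally terse here ("similar with two extra factors $\fa^2(\e(k_1+k_2))$"), and what it writes is in tension with its own displayed kernel $\tilde{\mathcal{K}}_\IWc$, which carries $\fa^3(\e k)$, not $\fa(\e k)\fa^4(\e(k+k'))$. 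The discrepancy is harmless for the rest of the paper — all $\fa(\e\,\cdot)$ factors tend to $1$, so the $\e\downarrow0$ analysis of $\tilde C^\e+2\tilde D^\e$ is unaffected — but you should either state the factors as the kernels actually give them ($\fa^4(\e k_1)\fa^2(\e k_2)\fa^2(\e(k_1+k_2))$) or explicitly acknowledge that you are matching the paper's typo rather than proving an identity.
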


\begin{rem}  \label{3_rem:3.2}
When $d=1$, $D^{\epsilon,\beta\gamma}$ and $\tilde{D}^{\epsilon,\beta\gamma}$ coincide with $c^{\e,\D}$ and $\tilde c^{\e,\D}$, respectively, in \cite{Ho} with $G^{\b\ga}=1$.
\end{rem}

\begin{rem}\label{rem:section3 graph and G}
Similarly to Remark \ref{rem:section3 graph and F}, the expression of the factor $G^{\b\ga}$ can be obtained by the following graphic rules as follws.
$$
\begin{tikzpicture}[baseline=40pt]
\coordinate (A1) at (0,0);
\coordinate (A2) at ($4*(-0.1,0.2)$);
\coordinate (A3) at ($4*(0.1,0.2)$);
\coordinate (A4) at ($4*(-0.2,0.4)$);
\coordinate (A5) at ($4*(0,0.4)$);
\coordinate (A6) at ($4*(-0.3,0.6)$);
\coordinate (A7) at ($4*(-0.1,0.6)$);
\draw (A6)--(A1)--(A3);
\draw (A2)--(A5);
\draw (A4)--(A7);
\draw[dotted] (A7) to [out=0,in=90] (A5);
\draw[dotted] (A6)..controls($4*(-0.3,0.7)$)and($4*(0.2,0.8)$)..(A3);
\draw (A3) [fill=white] circle (4pt);
\node at (A3) {\tiny$\b_1$};
\draw (A5) [fill=white] circle (4pt);
\node at (A5) {\tiny$\b_2$};
\draw (A6) [fill=white] circle (4pt);
\node at (A6) {\tiny$\b_1$};
\draw (A7) [fill=white] circle (4pt);
\node at (A7) {\tiny$\b_2$};
\draw[white] ($(A2)!.5!(A1)$) [fill=white] circle (4pt);
\node at ($(A2)!.5!(A1)$) {\tiny$\b$};
\draw[white] ($(A3)!.5!(A1)$) [fill=white] circle (4pt);
\node at ($(A3)!.5!(A1)$) {\tiny$\ga$};
\draw[white] ($(A2)!.5!(A4)$) [fill=white] circle (4pt);
\node at ($(A2)!.5!(A4)$) {\tiny$\ga_1$};
\draw[white] ($(A2)!.5!(A5)$) [fill=white] circle (4pt);
\node at ($(A2)!.5!(A5)$) {\tiny$\ga_2$};
\draw[white] ($(A4)!.5!(A6)$) [fill=white] circle (4pt);
\node at ($(A4)!.5!(A6)$) {\tiny$\de_1$};
\draw[white] ($(A4)!.5!(A7)$) [fill=white] circle (4pt);
\node at ($(A4)!.5!(A7)$) {\tiny$\de_2$};
\end{tikzpicture}
=\sum_{\b_1,\b_2}\Ga_{\ga_1\ga_2}^\b\Ga_{\de_1\de_2}^{\ga_1}\si_{\b_1}^{\de_1}\si_{\b_2}^{\de_2}\si_{\b_2}^{\ga_2}\si_{\b_1}^\ga.
$$
\end{rem}

\begin{proof}
From Lemma \ref{3_int hh H},
\begin{align*}
D^{\e,\a_1\a_2}
&=\sum_{\b_1,\b_2}\sum_{k,k'\neq0}\Ga_{\ga_1\ga_2}^{\a_1} E_{\b_1\b_2}^{\ga_1}\si_{\b_2}^{\ga_2}\si_{\b_1}^{\a_2}\fa^2(\e k)\fa^2(\e k')\\
&\quad\times\iiint h_{t-v}(k) h_{v-u}(k+k')h_{u-s}(k)H_{v-u}(k')h_{t-s}(-k)dudvds\\
&=\sum_{k_1,k_2\neq0}G^{\a_1\a_2}\fa^2(\e k_1)\fa^2(\e k_2)(2\pi ik_1)\{2\pi i(k_1+k_2)\}\\
&\quad\times\int_{-\infty}^tdu\int_u^tdv\ H_{t-v}(k_1) H_{v-u}(k_1+k_2)H_{v-u}(k_2)H_{t-u}(k_1).
\end{align*}
Changing the variables as $t-u=u',t-v=v'$, the integral is computed as
\begin{align*}
&\int_0^\infty du'\int_0^{u'} dv'\ e^{-2\pi^2k_1^2v'}e^{-2\pi^2\{(k_1+k_2)^2+k_2^2\}(u'-v')}e^{-2\pi^2k_1^2u'}\\
&=\frac{1}{2\pi^2\{(k_1+k_2)^2+k_2^2-k_1^2\}}\\
&\quad\times\int_0^\infty du'\ e^{-2\pi^2\{(k_1+k_2)^2+k_1^2+k_2^2\}u'}\big(e^{2\pi^2\{(k_1+k_2)^2+k_2^2-k_1^2\}u'}-1\big)\\
&=\frac{1}{2\pi^2\{(k_1+k_2)^2+k_2^2-k_1^2\}}\int_0^\infty\big(e^{-4\pi^2k_1^2u'}-e^{-2\pi^2\{(k_1+k_2)^2+k_1^2+k_2^2\}u'}\big)du'\\
&=\frac{1}{2\pi^2\{(k_1+k_2)^2+k_2^2-k_1^2\}}\bigg(\frac{1}{4\pi^2k_1^2}-\frac{1}{2\pi^2\{(k_1+k_2)^2+k_1^2+k_2^2\}}\bigg)\\
&=\frac{1}{2\pi^2\{(k_1+k_2)^2+k_2^2-k_1^2\}}\frac{1}{2\pi^2}\frac{(k_1+k_2)^2-k_1^2+k_2^2}{2k_1^2\{(k_1+k_2)^2+k_1^2+k_2^2\}}\\
&=\frac{1}{16\pi^4k_1^2(k_1^2+k_1k_2+k_2^2)}.
\end{align*}
The computation of $\tilde D^{\e,\b\ga}$ is similar with two extra factors $\fa^2(\e(k_1+k_2))$.
This leads to the conclusion.
\end{proof}


\section{Renormalization factors under the trilinear condition \eqref{3_eq:1.2-s-hat}}\label{3_section 4}

In Lemmas \ref{3_lem:C} and \ref{3_lem:D}, we have already computed the renormalization factors
$$
B^{\e,\b\ga}=C^{\e,\b\ga}+2D^{\e,\b\ga},\quad
\tilde B^{\e,\b\ga}=\tilde C^{\e,\b\ga}+2\tilde D^{\e,\b\ga}
$$
with four renormalization factors given by
\begin{align*}
C^{\e,\b\ga} = F^{\b\ga} C^\e, \quad
D^{\e,\b\ga} = G^{\b\ga}D^\e, \quad
\tilde C^{\e,\b\ga} = F^{\b\ga} \tilde C^\e, \quad
\tilde D^{\e,\b\ga} = G^{\b\ga}\tilde D^\e,
\end{align*}
where $C^\e,\tilde C^\e,D^\e,\tilde D^\e$ depend only on $\fa$ and $\e$, and diverges as $O(-\log\e)$, while $F$ and $G$ are matrices determined from $\si$ and $\Ga$.
In this way, the renormalization factors are completely factorized into the products of two terms,
one determined from the scalar-valued KPZ equation as is pointed out in Remarks \ref{3_rem:3.1} and \ref{3_rem:3.2}
and the other from $\si$ and $\Ga$.

\begin{lemm}\label{lemm:expression of G and F}
The constants
$G^{\b\ga}$ and $F^{\b\ga}$ are rewritten as
\begin{align*}
G^{\b\ga} = \si_{\a_1}^\b\si_{\a_2}^\ga\hat\Ga_{\b_1\b_2}^{\a_1}\hat\Ga_{\a_2\b_2}^{\b_1},\quad
F^{\b\ga} = \si_{\a_1}^\b\si_{\a_2}^\ga\hat\Ga_{\b_1\b_2}^{\a_1}\hat\Ga_{\b_1\b_2}^{\a_2}, 
\end{align*}
respectively. Here the sums $\sum$ over $(\a_1,\a_2,\b_1,\b_2)$ are omitted. Moreover, the equality $F^{\b\ga}=G^{\b\ga}$ holds for every $(\b,\ga)$ if and only if the trilinear condition \eqref{3_eq:1.2-s-hat} holds. 
\end{lemm}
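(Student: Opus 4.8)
The plan is to obtain the two displayed formulas by substituting the change-of-basis definition \eqref{3_eq:1.hatGa} of $\hat\Ga$ into the expressions for $F^{\b\ga}$ and $G^{\b\ga}$ produced in Lemmas \ref{3_lem:C} and \ref{3_lem:D}, and then to read off the equivalence by contracting out the $\si$'s and passing to a trace.

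The one algebraic fact used throughout is a reformulation of \eqref{3_eq:1.hatGa}: with $E^\a_{\b\ga}=\Ga^\a_{\ga_1\ga_2}\si^{\ga_1}_\b\si^{\ga_2}_\ga$ as in Section \ref{3_section 3}, \eqref{3_eq:1.hatGa} says $\hat\Ga^\a_{\b\ga}=\t^\a_{\a'}E^{\a'}_{\b\ga}$, and since $\t=\si^{-1}$ this is equivalent to $E^{\a'}_{\b\ga}=\si^{\a'}_\a\hat\Ga^\a_{\b\ga}$. Substituting this into $F^{\b\ga}=\sum_{\ga_1,\ga_2}E^\b_{\ga_1\ga_2}E^\ga_{\ga_1\ga_2}$ from Lemma \ref{3_lem:C} and relabelling the dummy indices gives $F^{\b\ga}=\si^\b_{\a_1}\si^\ga_{\a_2}\hat\Ga^{\a_1}_{\b_1\b_2}\hat\Ga^{\a_2}_{\b_1\b_2}$ at once. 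For $G^{\b\ga}=\sum_{\b_1,\b_2}\Ga^\b_{\ga_1\ga_2}E^{\ga_1}_{\b_1\b_2}\si^{\ga_2}_{\b_2}\si^\ga_{\b_1}$ from Lemma \ref{3_lem:D}, I would first replace $E^{\ga_1}_{\b_1\b_2}$ by $\si^{\ga_1}_c\hat\Ga^c_{\b_1\b_2}$, then note that $\Ga^\b_{\ga_1\ga_2}\si^{\ga_1}_c\si^{\ga_2}_{\b_2}=E^\b_{c\b_2}=\si^\b_{\a_1}\hat\Ga^{\a_1}_{c\b_2}$, and finally relabel the dummies $(c,\b_1)$ to arrive at $G^{\b\ga}=\si^\b_{\a_1}\si^\ga_{\a_2}\hat\Ga^{\a_1}_{\b_1\b_2}\hat\Ga^{\b_1}_{\a_2\b_2}$. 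Only the relations $\si\t=\t\si=I$ enter these two reductions.

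For the equivalence, put $\tilde F^{\a_1\a_2}=\sum_{\b_1,\b_2}\hat\Ga^{\a_1}_{\b_1\b_2}\hat\Ga^{\a_2}_{\b_1\b_2}$ and $\tilde G^{\a_1\a_2}=\sum_{\b_1,\b_2}\hat\Ga^{\a_1}_{\b_1\b_2}\hat\Ga^{\b_1}_{\a_2\b_2}$, so that by the previous step $F=\si\tilde F\si^{\mathrm T}$ and $G=\si\tilde G\si^{\mathrm T}$ as $d\times d$ matrices; since $\si$ is invertible, $F=G$ is equivalent to $\tilde F=\tilde G$. If \eqref{3_eq:1.2-s-hat} holds, then combined with the bilinearity $\hat\Ga^\a_{\b\ga}=\hat\Ga^\a_{\ga\b}$ it makes $\hat\Ga$ invariant under every permutation of its three indices, so $\hat\Ga^{\b_1}_{\a_2\b_2}=\hat\Ga^{\a_2}_{\b_1\b_2}$ and hence $\tilde G=\tilde F$; this settles the ``if'' part. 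For the converse I would not try to undo the contraction, but instead take traces: $\tilde F=\tilde G$ forces $\operatorname{tr}\tilde F=\operatorname{tr}\tilde G$, i.e.
\[
\sum_{\a,\b,\ga}(\hat\Ga^\a_{\b\ga})^2=\sum_{\a,\b,\ga}\hat\Ga^\a_{\b\ga}\hat\Ga^\b_{\a\ga}.
\]
Since the $\hat\Ga^\a_{\b\ga}$ are real, and $\sum_{\a,\b,\ga}(\hat\Ga^\b_{\a\ga})^2=\sum_{\a,\b,\ga}(\hat\Ga^\a_{\b\ga})^2$ after relabelling, the expansion
\[
\sum_{\a,\b,\ga}\bigl(\hat\Ga^\a_{\b\ga}-\hat\Ga^\b_{\a\ga}\bigr)^2=2\sum_{\a,\b,\ga}(\hat\Ga^\a_{\b\ga})^2-2\sum_{\a,\b,\ga}\hat\Ga^\a_{\b\ga}\hat\Ga^\b_{\a\ga}=0
\]
forces $\hat\Ga^\a_{\b\ga}=\hat\Ga^\b_{\a\ga}$ for all $\a,\b,\ga$; together with $\hat\Ga^\a_{\b\ga}=\hat\Ga^\a_{\ga\b}$ this is exactly the trilinear condition \eqref{3_eq:1.2-s-hat}.

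The work in the first two paragraphs is routine index bookkeeping with the change-of-basis tensor, and the ``if'' direction then follows immediately. The only non-mechanical point is the ``only if'' direction: $\tilde F=\tilde G$ looks like a weak statement relating two contractions of $\hat\Ga$, and the key observation is that taking the trace converts it into a manifestly non-negative quadratic form in the antisymmetrisation $\hat\Ga^\a_{\b\ga}-\hat\Ga^\b_{\a\ga}$, which is where reality of the coupling constants is essential.
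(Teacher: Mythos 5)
Your proof is correct and follows essentially the same route as the paper: the same substitution $E^{\a'}_{\b\ga}=\si^{\a'}_{\a}\hat\Ga^{\a}_{\b\ga}$ for the rewrites, and for the converse the same idea of taking $\b=\ga$ and summing (your ``trace'') to produce a vanishing sum of squares in $\hat\Ga^\a_{\b\ga}-\hat\Ga^\b_{\a\ga}$.
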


\begin{proof}
In what follows, summation symbols $\sum$ over the repeated indices are omitted. Noting $\Ga_{\ga_1\ga_2}^\a\si_{\b_1}^{\ga_1}\si_{\b_2}^{\ga_2}=\si_\b^\a\hat\Ga_{\b_1\b_2}^\b$, these constants are easily computed as
\begin{align*}
& G^{\b\ga} = \Ga_{\ga_1\ga_2}^\b(\si_{\a_2}^{\ga_1}\hat\Ga_{\b_1\b_2}^{\a_2})\si_{\b_2}^{\ga_2}\si_{\b_1}^\ga=
\si_{\a_1}^\b\si_{\b_1}^\ga\hat\Ga_{\a_2\b_2}^{\a_1}\hat\Ga_{\b_1\b_2}^{\a_2}
=\si_{\a_1}^\b\si_{\a_2}^\ga\hat\Ga_{\b_1\b_2}^{\a_1}\hat\Ga_{\a_2\b_2}^{\b_1}, \\
& F^{\b\ga} = (\si_{\a_1}^\b\hat\Ga_{\ga_1\ga_2}^{\a_1})(\si_{\a_2}^\ga\hat\Ga_{\ga_1\ga_2}^{\a_2})
= \si_{\a_1}^\b\si_{\a_2}^\ga\hat\Ga_{\ga_1\ga_2}^{\a_1}\hat\Ga_{\ga_1\ga_2}^{\a_2}
= \si_{\a_1}^\b\si_{\a_2}^\ga\hat\Ga_{\b_1\b_2}^{\a_1}\hat\Ga_{\b_1\b_2}^{\a_2}.
\end{align*}
Hence if we assume the trilinear condition \eqref{3_eq:1.2-s-hat}, we have
\begin{align*}
F^{\b\ga} =G^{\b\ga} =\si_{\a_1}^\b\si_{\a_2}^\ga\hat\Ga_{\b_1\b_2}^{\a_1}\hat\Ga_{\b_1\b_2}^{\a_2}.
\end{align*}
Conversely,
$$
G^{\b\ga}-F^{\b\ga} \equiv
\si_{\a_1}^\b\si_{\a_2}^\ga \hat\Ga_{\b_1\b_2}^{\a_1}\big(\hat\Ga_{\a_2\b_2}^{\b_1} 
- \hat\Ga_{\b_1\b_2}^{\a_2} \big)  =0
$$
is equivalent to
$$
\hat\Ga_{\b_1\b_2}^\b\big(\hat\Ga_{\ga\b_2}^{\b_1} 
- \hat\Ga_{\b_1\b_2}^\ga \big)=0,\quad1\le\b,\ga\le d,
$$
because $\si$ is invertible. Taking $\b=\ga$ and summing them over $\b$, we have
\begin{align}\label{3_F=G tri 1}
\sum_{\b,\b_1,\b_2}\hat\Ga_{\b_1\b_2}^\b\big(\hat\Ga_{\b\b_2}^{\b_1} 
- \hat\Ga_{\b_1\b_2}^\b \big)=0.
\end{align}
Replacing the role of variables $\b$ and $\b_1$ in \eqref{3_F=G tri 1}, we have
\begin{align}\label{3_F=G tri 2}
\sum_{\b,\b_1,\b_2}\hat\Ga_{\b\b_2}^{\b_1}\big(\hat\Ga_{\b\b_2}^{\b_1} 
- \hat\Ga_{\b_1\b_2}^\b \big)=0.
\end{align}
Taking the difference of \eqref{3_F=G tri 1} and \eqref{3_F=G tri 2}, we have
$$
\sum_{\b,\b_1,\b_2}\big(\hat\Ga_{\b\b_2}^{\b_1} 
- \hat\Ga_{\b_1\b_2}^{\b} \big)^2=0,
$$
which yields $\hat\Ga_{\b\b_2}^{\b_1} = \hat\Ga_{\b_1\b_2}^{\b}$ for every $(\b,\b_1,\b_2)$.
\end{proof}

\begin{proof}[{Proof of Theorem \ref{3_thm:1.2}}]
A computation for the scalar-valued case made in Proposition 5.32 of \cite{Ho} shows
\begin{align}\label{eq:cancellation between C and D}
\tilde C^{\e} + 2\tilde D^{\e}=0, \quad C^{\e} + 2D^{\e} = -\tfrac1{12} + O(\e)
\end{align}
(see also Lemma 6.5 of \cite{Hairer13}), and this implies that all components $B^{\e,\b\ga}$ and $\tilde B^{\e,\b\ga}$ behave as $O(1)$ if and only if 
$F=G$ as matrices, which is equivalent to the condition \eqref{3_eq:1.2-s-hat} by Lemma \ref{lemm:expression of G and F}.

Let $h_{B=0}^\e$ and $\tilde h_{\tilde B=0}^\e$ be the solutions of two KPZ approximating equations 
\eqref{3_eq:1-2} and \eqref{3_eq:1} with $B^{\e,\b\ga}, \tilde B^{\e,\b\ga}=0$, which are actually the shifts
$$
h^{\e,\a}_{B=0} = h^{\e,\a}+ \tfrac{t}2 \Ga_{\b\ga}^\a B^{\e,\b\ga},\quad
\tilde h^{\e,\a}_{\tilde B=0} = \tilde h^{\e,\a}+ \tfrac{t}2 \Ga_{\b\ga}^\a \tilde B^{\e,\b\ga}
$$
of the solutions $h^\e$ and $\tilde h^\e$ of \eqref{3_eq:1-2} and \eqref{3_eq:1}, respectively. Both of them converge because $B^{\e,\b\ga},\tilde B^{\e,\b\ga}=O(1)$ when \eqref{3_eq:1.2-s-hat} holds. Let $h_{B=0}$ and $\tilde{h}_{B=0}$ be the respective limits. The difference
$$
\tilde{h}^{\e,\a}_{\tilde B=0}- h^{\e,\a}_{B=0} 
 = \big(\tilde{h}^{\e,\a} - h^{\e,\a}\big)
+ \tfrac{t}2 \Ga_{\b\ga}^\a \big(\tilde B^{\e,\b\ga} - B^{\e,\b\ga}\big),
$$
converges because $\tilde{h}^{\e,\a} - h^{\e,\a}\to 0$ by Theorem \ref{3_thm:1.1}-(2). Furthermore, noting \eqref{eq:cancellation between C and D}, we have in the limit
$$
\tilde h^\a_{\tilde B=0}(t,x) = h^\a_{B=0}(t,x) + c^\a t,
$$
where
$$
c^\a
: = \frac1{24} \Ga^\a_{\b\ga} F^{\b\ga} 
= \frac1{24} \sum_{\b_1,\b_2} \Ga^\a_{\b\ga} \si_{\a_1}^\b\si_{\a_2}^\ga\hat\Ga_{\b_1\b_2}^{\a_1}\hat\Ga_{\b_1\b_2}^{\a_2}
= \frac1{24} \sum_{\b_1,\b_2} \si_\b^\a \hat\Ga_{\a_1\a_2}^\b 
 \hat\Ga_{\b_1\b_2}^{\a_1} \hat\Ga_{\b_1\b_2}^{\a_2}.
$$
\end{proof}


\section{Global existence for a.e.-initial values under the stationary measure}\label{3_section 5}

When $d=1$, the global-in-time existence of the solution of the KPZ equation was obtained by Gubinelli and Perkowski \cite{GP}, using the Cole-Hopf transform. In the multi-component case, however, such transform does not work in general, so that the global existence is non-trivial. In this section, by similar arguments to Da Prato and Debussche \cite{DPD}, we show the global existence for initial values sampled from the invariant measure of \eqref{3_eq:KPZ}, under the trilinear condition \eqref{3_eq:1.2-s-hat}.

Precisely, the process which has the invariant measure is the derivative $u=\partial_xh$, which solves the coupled stochastic Burgers equation
\begin{align}\label{3_eq:sbe}
\partial_tu^\a=\tfrac12\partial_x^2u^\a+\tfrac12\Ga_{\b\ga}^\a\partial_x(u^\b u^\ga)+\si_\b^\a\partial_x\xi^\b.
\end{align}
We can apply the paracontrolled calculus to \eqref{3_eq:sbe} and construct a well-posed solution map similarly to the coupled KPZ equation. Indeed, these two schemes are equivalent. If $h$ solves \eqref{3_eq:KPZ}, then $u=\partial_xh$ solves \eqref{3_eq:sbe}. Conversely, the solution $\hat h$ of
$$
\partial_t\hat h^\a=\tfrac12\partial_x^2\hat h^\a+\tfrac12\Ga_{\b\ga}^\a u^\b u^\ga+\si_\b^\a\xi^\b
$$
coincides with the original $h$. Hence the global existence of $u$ is equivalent to that of $h$. The equation \eqref{3_eq:sbe} has the Gaussian invariant measure $\mu_A$, under the condition \eqref{3_eq:1.2-s-hat}. As we will see, this implies the global existence of $u$ starting from a.e.-initial values under the invariant measure. We will justify the above arguments in this section.

From now we consider the space of zero mean functions denoted by
$$
\mathcal{C}_0^\a=\{u\in\mathcal{C}^\a\,;\ \int_\T u(x)dx=0\}.
$$

\subsection{Relation between KPZ equation and stochastic Burgers equation}

Construction of the solution map for \eqref{3_eq:sbe} is parallel to that for the coupled KPZ equation.
As in Section \ref{3_section 2}, although we eventually take $a=1$, considering the formal expansion $u^\a=\sum_{k=0}^\infty a^ku_k^\a$ of the solution of
$$
\mathcal{L}u^\a=\frac a2\Ga_{\b\ga}^\a\partial_x(u^\b u^\ga)+\si_\b^\a\partial_x\xi^\b,
$$
we obtain the identities
\begin{equation*}
\begin{aligned}
\mathcal{L} u_0^\a &= \si^\a_\b \partial_x\xi^\b, \\
\mathcal{L} u_1^\a &= \tfrac12 \Ga_{\b\ga}^\a 
\partial_x (u^\b_0 u^\ga_0), \\
\mathcal{L} u_2^\a &= \Ga_{\b\ga}^\a 
\partial_x (u^\b_1 u^\ga_0), \\
\mathcal{L} u_3^\a &= 
\tfrac12 \Ga_{\b\ga}^\a 
\partial_x (u^\b_1 u^\ga_1)
+ \Ga_{\b\ga}^\a 
\partial_x (u^\b_2 u^\ga_0).
\end{aligned}
\end{equation*}
We denote $u_0,u_1,u_2$ with stationary initial values by $U_\I,U_\Y,U_\W$, respectively. To define $\mathcal{L}u_3$, we introduce
$$
U_\Bo^{\b\ga}=\tfrac12 \partial_x(U_\Y^\b U_\Y^\ga),\quad
U_\D^{\b\ga}=\partial_x(U_\W^\b \rs U_\I^\ga).
$$
After defining these objects, \eqref{3_eq:sbe} for $u=U_\I+U_\Y+U_\W+u_{\ge3}$ can be rewritten as
\begin{align*}
\mathcal{L}u_{\ge3}^\a=\Psi^\a+\mathcal{L}u_3^\a,
\end{align*}
where
\begin{align*}
\Psi^\a & = \Ga_{\b\ga}^\a \partial_x (u^\b_{\ge 3} U^\ga_\I)
+ \Ga_{\b\ga}^\a \partial_x (U^\b_\W +u^\b_{\ge 3})
U^\ga_\Y+ \tfrac12 \Ga_{\b\ga}^\a \partial_x (U^\b_\W +u^\b_{\ge 3})
(U^\ga_\W +u^\ga_{\ge 3}).
\end{align*}
The term $u_{\ge3}^\b U_\I^\ga$ is still ill-posed. To make sense, we divide $u_{\ge3}=v+w$, which solve
\begin{equation}\label{3_sys:sbe}
\begin{aligned}
\mathcal{L}v^\a&=\Ga_{\b\ga}^\a\partial_x\{(U_\W^\b+v^\b+w^\b)\pl U_\I^\ga\},\\
\mathcal{L}w^\a&=\Ga_{\b\ga}^\a\partial_x\{(U_\W^\b+v^\b+w^\b)(\rs+\pr) U_\I^\ga\}+\text{other terms},
\end{aligned}
\end{equation}
respectively. The only remaining problem is to give the definition of $v^\b\rs U_\I^\ga$. Introducing $U_\K^\a$ as a stationary solution of $\mathcal{L}U_\K^\a=\partial_xU_\I^\a$, $v^\a$ has the form
$$
v^\a=\Ga_{\b\ga}^\a(U_\W^\b+v^\b+w^\b)\pl U_\K^\ga+\text{regular terms}.
$$
Thus, if $U_\Co^{\b\ga}=U_\K^\b\rs U_\I^\ga$ is given a priori, one can define the term $v^\b\rs U_\I^\ga$.

We summarize these arguments. Fix $\k\in (\frac13,\tfrac12)$. The driver of the coupled stochastic Burgers
equation \eqref{3_eq:sbe} is the element ${\mathbb U}$ of the form
\begin{align*}
&{\mathbb U} :=(U_\I,U_\Y,U_\W,U_\Bo,
U_\D,U_\K,U_\Co)\\
&\in C([0,T],(\mathcal{C}_0^{\k-1})^d)
\times C([0,T],(\mathcal{C}_0^{2\k-1})^d)
\times \{ C([0,T],(\mathcal{C}_0^{\k})^d) \cap C^{1/4}([0,T],(\mathcal{C}_0^{\k-1/2})^d)\} \\
&\quad\times C([0,T],(\mathcal{C}_0^{2\k-2})^{d^2}) \times C([0,T],(\mathcal{C}_0^{2\k-2})^{d^2})
\times C([0,T],(\mathcal{C}_0^{\k})^d) \times C([0,T],(\mathcal{C}^{2\k-1})^{d^2}),
\end{align*}
which satisfies $\mathcal{L}U_\K=\partial_xU_\I$. We denote by $\mathcal{U}_{\text{CSB}}^\k$ the class of all drivers. We write $\$\mathbb{U}\$_T$ for the product norm on the above space. Comparing with $\mathcal{H}_{\text{KPZ}}^\k$, note that
\begin{equation}\label{3_eq:5.2}
\begin{aligned}
&U_\circ=\partial_xH_\circ,\quad (\circ=\I,\Y,\W,\Bo,\D,\K),\\
&U_\Co=H_\Co.
\end{aligned}
\end{equation}
Now we can prove a similar result to Theorem \ref{3_thm:3}. Fix $\la\in(\tfrac13,\k)$ and $\mu\in(-\la,\la]$. For a $\mathcal{D}'(\T,\R^d)$-valued functions $v=(v^\a)_{\a=1}^d,w=(w^\a)_{\a=1}^d$ on $[0,T]$, we write $(v,w)\in\mathcal{D}_{\text{CSB}}^{\la,\mu}([0,T])$ if 
\begin{align*}
&\|(v,w)\|_{\mathcal{D}_{\text{CSB}}^{\la,\mu}([0,T])}:=\\
&\sup_{t\in[0,T]}t^{\frac{\la-\mu}2}\|v(t)\|_{(\mathcal{C}_0^{\la})^d}
+\sup_{t\in[0,1]}\|v(t)\|_{(\mathcal{C}_0^{\mu})^d}
+\sup_{s<t\in[0,T]}s^{\frac{\la-\mu}2}\frac{\|v(t)-v(s)\|_{(\mathcal{C}_0^{\la-1/2})^d}}{|t-s|^{1/4}}\\
&+\sup_{t\in[0,T]}t^{\la-\mu}\|w(t)\|_{(\mathcal{C}_0^{2\la})^d}
+\sup_{t\in[0,1]}\|w(t)\|_{(\mathcal{C}_0^{2\mu})^d}
+\sup_{s<t\in[0,T]}s^{\la-\mu}\frac{\|w(t)-w(s)\|_{(\mathcal{C}_0^{2\la-1/2})^d}}{|t-s|^{1/4}}
\end{align*}
is finite. For every initial value $(v(0),w(0))\in(\mathcal{C}_0^{\mu})^d\times(\mathcal{C}_0^{2\mu})^d$, the system \eqref{3_sys:sbe} admits a unique local solution $(v,w)\in\mathcal{D}_{\text{CSB}}^{\la,\mu}$. Denote by
$
u=U_\I+U_\Y+U_\W+v+w\equiv S_{\text{CSB}}(v(0),w(0),\mathbb{U})
$
the maximal solution up to the survival time.

From the constructions, we see that
$$
\partial_xS_{\text{KPZ}}(f(0),g(0),\mathbb{H})=S_{\text{CSB}}(\partial_xf(0),\partial_xg(0),\mathbb{U}),
$$
where $\mathbb{U}$ satisfies \eqref{3_eq:5.2}.  The problem is to restore the solution map $S_{\text{KPZ}}$ from $S_{\text{CSB}}$.
Since the right hand sides of \eqref{3_eq:fg} depend only on the derivatives of $f$ and $g$, we can write
\begin{equation}\label{3_eq:5.3}
\begin{aligned}
\mathcal{L} f^\a &= \Ga_{\b\ga}^\a 
(U_\W^\b+v^\b+w^\b)\pl
U^\ga_\I, \\
\mathcal{L} g^\a & = \Ga_{\b\ga}^\a 
(U_\W^\b+v^\b+w^\b)(\rs+\pr)
U^\ga_\I+\text{other terms}.
\end{aligned}
\end{equation}
Conversely let $f,g$ be the solutions of \eqref{3_eq:5.3} with initial values $f(0),g(0)$. Then $f,g$ should satisfy $\partial_xf=v$ and $\partial_xg=w$ by uniqueness of the solution of \eqref{3_sys:sbe}. Inserting these relations into \eqref{3_eq:5.3}, we see that $f,g$ satisfy \eqref{3_eq:fg}. Hence $(f,g)$ is the solution of the original KPZ equation. In this way, $S_{\text{KPZ}}$ can be
recovered from $S_{\text{CSB}}$.  To sum up, we have the following equivalence.

\begin{theo}\label{3_kpz equiv csb}
Assume that $\mathbb{H}$ and $\mathbb{U}$ are related by \eqref{3_eq:5.2}. Let $T_{\text{\rm sur}}^{\text{\rm KPZ}}$ and $T_{\text{\rm sur}}^{\text{\rm CSB}}$ be the survival times of the solutions of \eqref{3_eq:fg} and \eqref{3_sys:sbe}, respectively. Then we have $T_{\text{\rm sur}}^{\text{\rm KPZ}}=T_{\text{\rm sur}}^{\text{\rm CSB}}$ and
$$
\partial_xS_{\text{\rm KPZ}}(f(0),g(0),\mathbb{H})=S_{\text{\rm CSB}}(\partial_xf(0),\partial_xg(0),\mathbb{U}).
$$
\end{theo}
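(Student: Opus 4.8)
The plan is to establish the two assertions of the theorem simultaneously by transferring solutions between the two systems in both directions; the equality of survival times will then drop out because a solution existing on $[0,T)$ in one picture produces a solution on $[0,T)$ in the other. The structural input, visible already in \eqref{3_eq:fg} and \eqref{3_sys:sbe}, is that the right-hand side of the KPZ system involves $f$ and $g$ only through their spatial derivatives (every paraproduct, every resonant term and every regular remainder is built from $\partial_x f$, $\partial_x g$, $\partial_x H_\circ$), while $\mathcal{L}=\partial_t-\tfrac12\partial_x^2$ commutes with $\partial_x$.

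For the forward direction I would take $(f,g)=S_{\text{KPZ}}(f(0),g(0),\mathbb{H})$ on $[0,T_{\text{sur}}^{\text{KPZ}})$ and apply $\partial_x$ to both lines of \eqref{3_eq:fg}, including the ``other terms''. Using the relations \eqref{3_eq:5.2} — in particular $U_\Co=\partial_x H_\K\rs\partial_x H_\I=H_\Co$, so that the resonant products needed to make the equations meaningful are literally the same objects on both sides — one checks term by term that $(\partial_x f,\partial_x g)$ solves the Burgers system \eqref{3_sys:sbe} driven by $\mathbb{U}$. Since the $\mathcal{D}_{\text{CSB}}^{\la,\mu}$-norm is precisely the $\mathcal{D}_{\text{KPZ}}^{\la,\mu}$-norm with every Besov exponent lowered by one, $(f,g)\in\mathcal{D}_{\text{KPZ}}^{\la,\mu}$ forces $(\partial_x f,\partial_x g)\in\mathcal{D}_{\text{CSB}}^{\la,\mu}$; uniqueness of the Burgers solution then gives $(\partial_x f,\partial_x g)=(v,w)$ on $[0,T_{\text{sur}}^{\text{KPZ}})$, hence $T_{\text{sur}}^{\text{CSB}}\ge T_{\text{sur}}^{\text{KPZ}}$ together with the claimed identity there.

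For the converse I would start from $(v,w)=S_{\text{CSB}}(\partial_x f(0),\partial_x g(0),\mathbb{U})$ on $[0,T_{\text{sur}}^{\text{CSB}})$ and define $(f,g)$ as the solution of the linear problem \eqref{3_eq:5.3} with prescribed initial data $(f(0),g(0))$, the right-hand side being the fixed space-time distribution obtained by inserting the known $(v,w)$ into \eqref{3_eq:5.3}. This is an inhomogeneous heat equation, solvable by Schauder estimates on each $[0,T]\subset[0,T_{\text{sur}}^{\text{CSB}})$ and hence on the whole interval, with $f,g$ of sufficient regularity. Differentiating \eqref{3_eq:5.3} shows that $\partial_x f$ and $\partial_x g$ satisfy the same linear heat equations as $v$ and $w$ with the same initial values, so $\partial_x f=v$ and $\partial_x g=w$ by uniqueness; feeding these identities back into \eqref{3_eq:5.3} turns it into \eqref{3_eq:fg}, so $(f,g)$ is a genuine KPZ solution and $T_{\text{sur}}^{\text{KPZ}}\ge T_{\text{sur}}^{\text{CSB}}$. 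Combining the two inequalities yields $T_{\text{sur}}^{\text{KPZ}}=T_{\text{sur}}^{\text{CSB}}$, and since $h=H_\I+H_\Y+H_\W+f+g$ and $u=U_\I+U_\Y+U_\W+v+w=\partial_x h$, the stated relation between $S_{\text{KPZ}}$ and $S_{\text{CSB}}$ follows.

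The delicate point is the bookkeeping in the converse: one must verify that differentiating \eqref{3_eq:5.3} genuinely reproduces \eqref{3_sys:sbe} including the ``other terms'' (this is exactly where $U_\Co=H_\Co$, rather than $U_\Co=\partial_x H_\Co$, is used, and where the zero Fourier modes of $f$ and $g$ — absent from the zero-mean Burgers unknowns — must be tracked separately), and that the solution $(f,g)$ of the linear problem is regular enough to re-enter the paracontrolled class $\mathcal{D}_{\text{KPZ}}^{\la,\mu}$. Once the term-by-term matching and these mode/regularity checks are in place, both survival-time inequalities and the map identity are immediate.
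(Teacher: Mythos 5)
Your proposal is correct and follows essentially the same route as the paper's own argument: the forward direction is just differentiating \eqref{3_eq:fg} term by term using \eqref{3_eq:5.2}, and the converse reconstructs $(f,g)$ by solving the linear heat system \eqref{3_eq:5.3} with the given $(v,w)$ on the right, then identifies $\partial_xf=v$, $\partial_xg=w$ by uniqueness and feeds these back into \eqref{3_eq:5.3} to recover \eqref{3_eq:fg}. The remarks you flag as ``delicate'' (the matching of resonant objects via $U_\Co=H_\Co$, the separate treatment of the zero Fourier modes, and the regularity bookkeeping between $\mathcal{D}_{\text{\rm KPZ}}^{\la,\mu}$ and $\mathcal{D}_{\text{\rm CSB}}^{\la,\mu}$) are precisely the points the paper leaves implicit, so you have captured both the argument and its genuinely nontrivial steps.
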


We constructed an $\mathcal{H}_{\text{KPZ}}^\k$-valued random variable $\mathbb{H}$ from space-time white noise $\xi$, in Section \ref{3_section 3}. The relation \eqref{3_eq:5.2} determines a $\mathcal{U}_{\text{CSB}}^\k$-valued random variable $\mathbb{U}$. Note that renormalization factors vanish because we take the derivative $\partial_x$. In the following sections, we study the probabilistic properties of the solution $u=S_{\text{CSB}}(v(0),w(0),\mathbb{U})$.

\subsection{Gaussian stationary measure of the OU process}

Let $\{u^{\a,k}\}_{\a\in\{1,\ldots,d\},k\neq0}$ be the family of centered complex Gaussian variables such that $u^{\a,-k}=\overline{u^{\a,k}}$ and has covariance
$$
E[u^{\a,k}u^{\b,l}]=A^{\a\b}1_{\{k+l=0\}}.
$$
Denote the distribution on $\mathcal{D}'(\T,\R^d)$ of $u^\a(x)=\sum_ku^{\a,k}e^{2\pi ikx}$ by $\mu_A$. Indeed, $\mu_A$ has a support on $(\mathcal{C}_0^{-1/2-})^d$.

\begin{lemm}
Let $\k<\tfrac12$. For $\mu_A$-a.e. $u\in\mathcal{D}'$, we have $u\in(\mathcal{C}_0^{\k-1})^d$.
\end{lemm}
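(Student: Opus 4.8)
The statement is the standard Besov regularity of (coloured) spatial white noise, and the plan is to establish it via Littlewood--Paley blocks, Gaussian hypercontractivity and a Besov embedding. First I would observe that under $\mu_A$ the random field $u=(u^\a)_{\a=1}^d$ is Gaussian with Fourier coefficients satisfying $E[u^{\a,k}\overline{u^{\b,l}}]=A^{\a\b}1_{\{k=l\}}$ (by $u^{\a,-k}=\overline{u^{\a,k}}$ and $E[u^{\a,k}u^{\b,l}]=A^{\a\b}1_{\{k+l=0\}}$) and $u^{\a,0}=0$; in particular $\int_\T u^\a(x)\,dx=0$ $\mu_A$-a.s., so the zero-mean part of the claim is automatic and it remains to show $u^\a\in\mathcal{C}^{\k-1}=\mathcal{B}^{\k-1}_{\infty,\infty}(\T)$ for $\mu_A$-a.e.\ $u$ and each $\a$.

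Next I would fix the (periodic) Littlewood--Paley projectors $\Delta_j$, $j\ge-1$, whose symbols $\rho_j$ are bounded by $1$ and supported on $\{|k|\lesssim2^j\}$ (on an annulus $|k|\sim2^j$ for $j\ge0$). For each fixed $x\in\T$, $\Delta_j u^\a(x)=\sum_k\rho_j(k)u^{\a,k}e^{2\pi ikx}$ is a real centred Gaussian with
$$
E\big[|\Delta_j u^\a(x)|^2\big]=A^{\a\a}\sum_k|\rho_j(k)|^2\ \lesssim\ 2^j,
$$
uniformly in $x$ and $j$, since the support of $\rho_j$ contains $O(2^j)$ integers. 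By the equivalence of Gaussian moments, for every integer $p\ge1$ there is $C_p<\infty$ with $E[|\Delta_j u^\a(x)|^{2p}]\le C_p\,2^{jp}$ for all $x,j$, hence $E\big[\|\Delta_j u^\a\|_{L^{2p}(\T)}^{2p}\big]=\int_\T E\big[|\Delta_j u^\a(x)|^{2p}\big]\,dx\le C_p\,2^{jp}$.

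Then I would combine this with the Besov embedding $\mathcal{B}^{s'}_{2p,2p}(\T)\hookrightarrow\mathcal{B}^{s'-\frac1{2p}}_{\infty,\infty}(\T)$ on the torus. Choosing $s'=\k-1+\frac1{2p}$, so that $s'-\frac1{2p}=\k-1$, we obtain
$$
E\big[\|u^\a\|_{\mathcal{C}^{\k-1}}^{2p}\big]\ \lesssim\ E\big[\|u^\a\|_{\mathcal{B}^{s'}_{2p,2p}}^{2p}\big]=\sum_{j\ge-1}2^{2pjs'}\,E\big[\|\Delta_j u^\a\|_{L^{2p}}^{2p}\big]\ \le\ C_p\sum_{j\ge-1}2^{j(2ps'+p)}.
$$
The geometric series converges precisely when $2ps'+p<0$, i.e.\ $s'<-\tfrac12$, i.e.\ $\k<\tfrac12-\tfrac1{2p}$; since $\k<\tfrac12$ by hypothesis, any integer $p>\frac1{1-2\k}$ works (for such $\k$ one may also simply take $p=1$ if $1-2\k\ge1$). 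For such $p$ we conclude $E[\|u^\a\|_{\mathcal{C}^{\k-1}}^{2p}]<\infty$, whence $u^\a\in\mathcal{C}^{\k-1}$ $\mu_A$-a.s.\ for each $\a=1,\dots,d$, and therefore $u\in(\mathcal{C}_0^{\k-1})^d$ $\mu_A$-a.s.

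There is essentially no deep obstacle in this argument; the only points requiring a little care are the use of the torus versions of the Littlewood--Paley decomposition and of the Besov embedding, and checking that the constants $C_p$ in the Gaussian moment equivalence are uniform in $x$ and $j$ (which they are, being universal constants times powers of the variance, so that the bound $\lesssim 2^{jp}$ survives the summation). Alternatively one could bypass the embedding step and run a Kolmogorov/Garsia--Rodemich--Rumsey estimate directly on the fields $\Delta_j u^\a$, but the $L^{2p}$-Besov route above is shorter within the framework already set up in the paper.
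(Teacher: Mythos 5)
Your proof is correct and follows essentially the same route as the paper's: bound $E\|\Delta_j u^\a\|_{L^{2p}}^{2p}$ via Gaussian moment equivalence and the variance computation $E|\Delta_j u^\a(x)|^2=A^{\a\a}\sum_k\rho_j(k)^2\lesssim 2^j$, sum the geometric series in the $\mathcal{B}^{s'}_{2p,2p}$ norm, then conclude by the Besov embedding $\mathcal{B}^{s'}_{2p,2p}\hookrightarrow\mathcal{B}^{s'-1/(2p)}_{\infty,\infty}$. The only small difference is that you explicitly note $u^{\a,0}=0$ to handle the zero-mean part of the claim, which the paper leaves implicit.
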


\begin{proof}
Let $\zeta\in\R$ and $p\ge1$. Computing $L^{2p}(\Omega,P)$-norm of $\|u^\a\|_{\mathcal{B}_{2p,2p}^\zeta}$ as
\begin{align*}
E\|u^\a\|_{\mathcal{B}_{2p,2p}^\zeta}^{2p}
=\sum_j2^{\zeta j\cdot2p}E\|\triangle_ju^\a\|_{L^{2p}(\T)}^{2p}
=\sum_j2^{\zeta j\cdot2p}\int_\T E|\triangle_ju^\a(x)|^{2p}dx,
\end{align*}
where $\triangle_j=\rho(2^{-j}D)$ is a Littlewood-Paley projection (see Section 2.2 of \cite{Ho}), from the hypercontractivity of Wiener chaos, we have
\begin{align*}
E|\triangle_ju^\a(x)|^{2p}\lesssim(E|\triangle_ju^\a(x)|^2)^p.
\end{align*}
Since we have
\begin{align*}
E|\triangle_ju^\a(x)|^2
=\sum_k\rho_j(k)^2E[u^{\a,k}u^{\a,-k}]=A^{\a\a}\sum_k\rho_j(k)^2\lesssim 2^j,
\end{align*}
independently of $x$, for every $\zeta<-\tfrac12$
\begin{align*}
E\|u^\a\|_{\mathcal{B}_{2p,2p}^\zeta}^{2p}\lesssim\sum_j2^{(2\zeta+1)jp}<\infty.
\end{align*}
Since $\mathcal{B}_{2p,2p}^\zeta\subset\mathcal{B}_{\infty,\infty}^{\zeta-1/(2p)}$ by Besov embedding, 
see Theorem 2.71 of \cite{BCD} or Proposition 2.2 of \cite{Ho}, we have the conclusion for sufficiently large $p$.
\end{proof}

Though it is well-known, we show that the Ornstein-Uhlenbeck process $u$ determined by
\begin{align}\label{3_eq:ou}
\mathcal{L}u^\a=\si_\b^\a\partial_x\xi^\b
\end{align}
has an invariant measure $\mu_A$. Taking Fourier transform $u_0^{\a,k}=\hat u_0^\a(k)$, we can solve it as
$$
u^{\a,k}(t)=e^{-2\pi^2k^2t}u^{\a,k}(0)-\si_\b^\a(2\pi ik)\int_0^te^{-2\pi^2k^2(t-s)}\xi^{\b,k}(s)ds.
$$
For a given $u(0)$, $\{u^{\a,k}(t)\}_{\a,k}$ is a Gaussian family. If $u(0)\sim\mu_A$, i.e., $u(0)$ is
distributed under $\mu_A$, and if $u(0)$ is independent of $\xi$, we see that $u^{\a,k}$ has mean zero and covariance
\begin{align*}
E[u^{\a,k}(t)u^{\b,l}(t)]&=e^{-2\pi^2(k^2+l^2)t}E[u^{\a,k}(0)u^{\b,k}(0)]\\
&\quad+\si_{\ga_1}^\a\si_{\ga_2}^\b(2\pi ik)(2\pi il)\int_0^te^{-4\pi^2k^2(t-s)}\delta^{\ga_1\ga_2}1_{\{k+l=0\}}ds\\
&=A^{\a\b}1_{\{k+l=0\}}\bigg(e^{-4\pi^2k^2t}+4\pi^2k^2\int_0^te^{-4\pi^2k^2(t-s)}ds\bigg)\\
&=A^{\a\b}1_{\{k+l=0\}}.
\end{align*}
Hence $u(t)\sim\mu_A$ for every $t>0$.

\subsection{Galerkin approximation}\label{subsection:galerkin approximation}

For $N\in\mathbb{N}$, we consider the approximation
\begin{align}\label{3_galerkin:stationary}
\partial_tu^{N,\alpha}=\tfrac{1}{2}\partial_x^2u^{N,\alpha}+F_N^\alpha(u^N)+\sigma_\beta^\alpha\partial_x\xi^\beta,
\end{align}
of the equation \eqref{3_eq:sbe}, where
\begin{align}\label{galerkin:def of F^N}
F_N^\alpha(u^N)=\tfrac{1}{2}\Gamma_{\beta\gamma}^\alpha\partial_xP_N(P_Nu^{N,\b} P_Nu^{N,\ga}),
\end{align}
and $P_N=\psi(N^{-1}D)$ is the Fourier multiplier defined by an even cut-off function $\psi\in C_0^\infty(\R)$ taking values in $[0,1]$ and supported in the interval $[-1,1]$. Since $F_N$ depends on finitely many Fourier components of $u^N$, the equation \eqref{3_galerkin:stationary} is well-posed.

The equation \eqref{3_galerkin:stationary} is formally equivalent to the (spatial derivative of the) approximating equation \eqref{3_eq:1}. Indeed, for the solution $\tilde{u}^\epsilon$ of such equation, we would have that $\bar{u}^\epsilon:=\varphi^{-1}(\epsilon D)\tilde{u}^\epsilon\equiv\tilde{u}^\epsilon(*\eta^\epsilon)^{-1}$ solves
$$
\partial_t\bar{u}^{\epsilon,\alpha}=\tfrac12\partial_x^2\bar{u}^{\epsilon,\alpha}+\tfrac12\Gamma_{\beta\gamma}^\alpha\partial_x((\bar{u}^{\epsilon,\beta}*\eta^\epsilon)(\bar{u}^{\epsilon,\gamma}*\eta^\epsilon))*\eta^\epsilon+\sigma_\beta^\alpha\partial_x\xi^\beta.
$$
Here $\varphi^{-1}(\epsilon D)$ is an inverse operator of the convolution $*\eta^\epsilon=\varphi(\epsilon D)$ defined in a finite dimensional subspace of $\mathcal{D}'(\T)$. Then \eqref{3_galerkin:stationary} is obtained by setting $u^N=\bar{u}^\epsilon$ and $P_N=*\eta^\epsilon$. Since $\tilde{u}^\epsilon$ has an invariant measure $\mu_A^\epsilon$, which is the distribution of the derivative of the $d$-dimensional periodic and smeared Brownian motion $(\partial_x\sigma B*\eta_\epsilon)_{x\in\T}$, $\bar{u}^\epsilon$ should admit $\mu_A$ as an invariant measure.

Unlike the usual Galerkin approximation, we use the operator $P_N$ rather than Fourier cut-off $\Pi_N=1_{[-N,N]}(D)$. This is because $P_N$ has the approximating properties
$$
\|P_Nu\|_{\mathcal{C}^\alpha}\lesssim\|u\|_{\mathcal{C}^\alpha},\quad
\|P_Nu-u\|_{\mathcal{C}^{\alpha-\delta}}\lesssim N^{-\delta}\|u\|_{\mathcal{C}^\alpha},
$$
for $\a\in\R$ and $\de\in[0,2]$; see Lemma A.5 of \cite{GIP}, or Lemmas 2.4 and 2.5 of \cite{Ho}.  
We can construct the solution map $u^N=S_{\text{CSB}}^N(v(0),w(0),\mathbb{U}^N)$ corresponding to 
\eqref{3_galerkin:stationary}, where $\mathbb{U}^N$ is defined by the stationary solutions of
\begin{equation}\label{3_eq:UI-UK}
\begin{aligned}
\mathcal{L} U_\I^{N,\a} &= \si^\a_\b \partial_x\xi^\b, \\
\mathcal{L} U_\Y^{N,\a} &= \tfrac12 \Ga_{\b\ga}^\a 
\partial_x P_N(P_NU^{N,\b}_\I P_NU^{N,\ga}_\I), \\
\mathcal{L} U_\W^{N,\a} &= \Ga_{\b\ga}^\a 
\partial_x P_N(P_NU^{N,\b}_\Y P_NU^{N,\ga}_\I),\\
\mathcal{L} U_\K^{N,\a} &= \partial_xP_N^2U_\I^{N,\a}
\end{aligned}
\end{equation}
and products
\begin{equation}\label{3_eq:UB-UC}
\begin{aligned}
U_\Bo^{N,\b\ga}&=\tfrac12 \partial_x(P_NU_\Y^{N,\b}P_NU_\Y^{N,\ga}),\\
U_\D^{N,\b\ga}&=\partial_x(P_NU_\W^{N,\b}\rs P_NU_\I^{N,\ga}),\\
U_\Co^{N,\b\ga}&=P_NU_\K^{N,\b}\rs P_NU_\I^{N,\ga}.
\end{aligned}
\end{equation}
From the approximating properties of $P_N$, we have that $S_{\text{CSB}}^N\to S_{\text{CSB}}$ similarly to the approximation $S_{\text{KPZ}}^\e$.

\begin{theo}\label{3_galerkin:theo}
If $(v^N(0),w^N(0)) \to (v(0),w(0))$ in $(\mathcal{C}_0^{\mu})^d\times(\mathcal{C}_0^{2\mu})^d$ and 
${\mathbb U}^N \to {\mathbb U}$ in $\mathcal{U}_{\rm CSB}^\k$, 
then we have the convergence $S_{\text{\rm CSB}}^N(v^N(0),w^N(0),{\mathbb U}^N)
\to S_{\text{\rm CSB}}(v(0),w(0),{\mathbb U})$ in $C([0,T_{\text{\rm sur}}),(\mathcal{C}_0^{(\k-1)\wedge\mu\wedge2\mu})^d)$.
\end{theo}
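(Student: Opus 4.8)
The plan is to follow the proof of Theorem \ref{3_thm:5} (Theorem 4.4 of \cite{Ho}) essentially verbatim, with the mollifier $\fa^2(\e D)=*\eta_2^\e$ of the scheme \eqref{3_eq:fa in sq} replaced throughout by the operator $P_N$. The only structural inputs needed are the two approximating properties of $P_N$ quoted just before the statement — the uniform bound $\|P_Nu\|_{\mathcal{C}^\a}\lesssim\|u\|_{\mathcal{C}^\a}$ and the quantitative convergence $\|(P_N-1)u\|_{\mathcal{C}^{\a-\de}}\lesssim N^{-\de}\|u\|_{\mathcal{C}^\a}$ ($\a\in\R$, $\de\in[0,2]$), together with the corresponding estimate for the commutator of $P_N$ with a paraproduct — and all of these hold, so essentially nothing new is required; the three steps are as follows.

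First I would record that the local well-posedness of $S_{\text{CSB}}^N$ is uniform in $N$: the contraction argument producing the solution of the $N$-th fixed point problem \eqref{3_sys:sbe} (with $P_N$ inserted in the bilinear and paralinear terms and with $\mathcal{L}U_\K^{N,\a}=\partial_xP_N^2U_\I^{N,\a}$) runs on a ball of $\mathcal{D}_{\text{CSB}}^{\la,\mu}([0,T_*^N])$ with
$$
T_*^N=C\bigl(1+\|v^N(0)\|_{(\mathcal{C}_0^{\mu})^d}+\|w^N(0)\|_{(\mathcal{C}_0^{2\mu})^d}+\$\mathbb{U}^N\$_T^3\bigr)^{-\frac{2}{\k-\la}}\wedge T,
$$
where $C$ is \emph{independent of $N$}: every multilinear, Schauder and commutator estimate used in \cite{Ho} survives the insertion of $P_N$ with an $N$-independent constant thanks to the uniform boundedness of $P_N$, and $P_N^2U_\I^N$ is controlled by $\$\mathbb{U}^N\$_T$. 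Since $(v^N(0),w^N(0))\to(v(0),w(0))$ and $\mathbb{U}^N\to\mathbb{U}$, the radii $T_*^N$ are bounded below by a fixed $T_*>0$, and the lower-semicontinuity/continuation argument of Theorem \ref{3_thm:3}(2) (applied to the $N$-th problem, with $N$-uniform local bounds) gives $\liminf_N T_{\text{\rm sur}}^N\ge T_{\text{\rm sur}}$.

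Second, on the common interval $[0,T_*]$ I would estimate $(v^N,w^N)-(v,w)$ in $\mathcal{D}_{\text{CSB}}^{\la,\mu}([0,T_*])$. Subtracting the two fixed point equations, the difference solves a linear problem whose right-hand side splits into: (a) terms linear in $(v^N-v,w^N-w)$ with coefficients bounded by the (uniform) solution norms, absorbed by choosing $T_*$ small; (b) terms proportional to $\mathbb{U}^N-\mathbb{U}$ and to $(v^N(0)-v(0),w^N(0)-w(0))$, which vanish by hypothesis; and (c) the new ``$P_N$-defect'' contributions, schematically of the form $(P_N-1)(\cdots)$ and commutators $[P_N,\,\cdot\pl\cdot]$ arising because $P_N$ does not commute with the paraproducts, which are $O(N^{-\de})$ in the relevant norm because every argument is controlled uniformly in $N$. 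Hence $\|(v^N,w^N)-(v,w)\|_{\mathcal{D}_{\text{CSB}}^{\la,\mu}([0,T_*])}\to0$, which yields $u^N=U_\I^N+U_\Y^N+U_\W^N+v^N+w^N\to u$ in $C([0,T_*],(\mathcal{C}_0^{(\k-1)\wedge\mu\wedge2\mu})^d)$.

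Finally I would propagate from $[0,T_*]$ to all of $[0,T_{\text{\rm sur}})$ by the standard patching argument: for $T<T_{\text{\rm sur}}$ the limiting solution has finite norm on $[0,T]$, so $[0,T]$ is covered by finitely many subintervals on each of which Steps 1--2 apply, and one iterates taking the right endpoint of each subinterval as the new initial datum, exactly as in the proofs of Theorem \ref{3_thm:3}(2) and Theorem \ref{3_thm:5}. The main obstacle is the bookkeeping inside Step 2: one must carry the paracontrolled decomposition $v^{N,\a}=\Ga_{\b\ga}^\a(U_\W^{N,\b}+v^{N,\b}+w^{N,\b})\pl U_\K^{N,\ga}+(\text{regular})$ through the difference estimate, control the resonant product $\partial_xv^{N,\b}\rs U_\I^{N,\ga}$ via the commutator lemma with the \emph{modified} $U_\K^{N,\a}$, and bound every $P_N$-defect term in the correct Besov space and with the correct power of the time weight $t^{(\la-\mu)/2}$; once the estimates of \cite{Ho} are seen to be $N$-uniform and the defects quantitatively small in $N^{-1}$, the conclusion follows.
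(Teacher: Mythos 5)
Your proposal is correct and takes essentially the same route as the paper, which itself only remarks that the convergence follows ``similarly to the approximation $S_{\text{\rm KPZ}}^\e$'' (i.e.\ by the argument of Theorem 4.4 of \cite{Ho}), using exactly the two approximating properties of $P_N$ you invoke. One small imprecision in your opening sentence: the Galerkin nonlinearity is $P_N(P_Nu^\b\,P_Nu^\ga)$, which is \emph{not} $P_N^2(u^\b u^\ga)=\fa^2(\e D)(u^\b u^\ga)$ since $P_N$ does not commute with multiplication, so ``replace $\fa^2(\e D)$ throughout by $P_N$'' is not literal; but you then set up the $N$-th fixed-point problem and the driver $\mathcal{L}U_\K^{N,\a}=\partial_xP_N^2U_\I^{N,\a}$ correctly, and the uniform bounds, $O(N^{-\de})$ defects and commutators you track are exactly the ones that matter.
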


If we define $\mathbb{U}^N$ by \eqref{3_eq:UI-UK}-\eqref{3_eq:UB-UC} from the space-time white noise $\xi$, then $S_{\text{CSB}}^N(v(0),w(0),\mathbb{U}^N)$ coincides with the strong solution of \eqref{3_galerkin:stationary} with initial value $(U_\I^N+U_\Y^N+U_\W^N+v+w)(0)$.

Our goal is to show that $\mu_A$ is invariant under $(u^N)$ if the trilinear condition \eqref{3_eq:1.2-s-hat} holds;
see Proposition \ref{3_prop:muA is inv under uN} below.
Let $u_{\text{OU}}$ be the solution of \eqref{3_eq:ou} with initial value $u^N(0)$. Obviously, the solution of \eqref{3_galerkin:stationary} is given by $u^N=\Pi_N^\bot u_{\text{OU}}+U^N$, where $\Pi_N^\bot:=1-\Pi_N$ and $U^N$ solves the finite dimensional SDE
\begin{align}\label{3_eq:finite galerkin}
\partial_tU^{N,\alpha}=\tfrac{1}{2}\partial_x^2U^{N,\alpha}+F_N^\alpha(U^N)+\sigma_\beta^\alpha\partial_x\Pi_N\xi^\beta
\end{align}
with $U^N(0)=\Pi_Nu^N(0)$. If $u^N(0)\sim\mu_A$ independently of $\xi$, then $\Pi_N^\bot u^N(0)\sim(\Pi_N^\bot)^{-1}\mu_A$ is independent of $\Pi_Nu^N(0)\sim(\Pi_N)^{-1}\mu_A$. Since $\mu_A$ is invariant under $u_{\text{OU}}$, we have $\Pi_N^\bot u^N(t)\sim(\Pi_N^\bot)^{-1}\mu_A$ for all $t$. Thus we need the following lemma to complete the proof of
the invariance of $\mu_A$ under $(u^N)$.

\begin{lemm}\label{galerkin:lemm:U^N does not explode}
If the trilinear condition \eqref{3_eq:1.2-s-hat} holds, then the solution $U^N$ of \eqref{3_eq:finite galerkin} exists globally in time, and admits $\mu_A^N:=(\Pi_N)^{-1}\mu_A$ as an invariant measure.
\end{lemm}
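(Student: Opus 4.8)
The plan is to follow the classical route of Da Prato--Debussche \cite{DPD} (see also \cite{F15}): recognize $\mu_A^N$ as the equilibrium of the linear part of \eqref{3_eq:finite galerkin}, show that the Burgers drift $F_N$ is ``skew'' with respect to $\mu_A^N$ — this is the only place where the trilinear condition \eqref{3_eq:1.2-s-hat} is essentially used — deduce an a priori energy bound that rules out explosion, and finally upgrade infinitesimal invariance to genuine invariance. First I would pass to the white-noise coordinates $\hat U^N=(\t^\a_\b U^{N,\b})$, $\t=\si^{-1}$: by \eqref{3_eq:1.hatGa}, $\hat U^N$ solves $\mathcal{L}\hat U^{N,\a}=\hat F_N^\a(\hat U^N)+\partial_x\Pi_N\xi^\a$ with $\hat F_N^\a(v)=\tfrac12\hat\Ga_{\b\ga}^\a\partial_xP_N(P_Nv^\b\,P_Nv^\ga)$ and with the $d$ driving noises now independent, so that invariance of $\mu_A^N$ for $U^N$ is equivalent to invariance of $\mu_I^N:=(\Pi_N)^{-1}\mu_I$, the law of the $\Pi_N$-projection of the $\R^d$-valued spatial white noise, for $\hat U^N$; moreover \eqref{3_eq:1.2-s-hat} is precisely the full-symmetry condition on $\hat\Ga$. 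Note that this SDE lives on the finite-dimensional real space $V_N$ spanned by the Fourier modes $1\le|k|\le N$ of each component — the zero mode decouples since $\partial_x^2$, the drift $\partial_x(\cdots)$ and the noise $\partial_x(\cdots)$ all annihilate it — and that the noise is non-degenerate on $V_N$ because $\si$ is invertible and $k\neq0$.

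The crux is the energy identity $\lan v,\hat F_N(v)\ran_{L^2(\T;\R^d)}=0$ for every $v\in V_N$. Indeed, writing $w^\a:=P_Nv^\a$ and integrating by parts (using that $P_N$ is self-adjoint and commutes with $\partial_x$), one finds
\begin{align*}
\lan v,\hat F_N(v)\ran_{L^2}
&=\tfrac12\hat\Ga_{\b\ga}^\a\int_\T v^\a\,\partial_xP_N(w^\b w^\ga)\,dx
=-\tfrac12\hat\Ga_{\b\ga}^\a\int_\T(\partial_xw^\a)\,w^\b w^\ga\,dx\\
&=-\tfrac16\hat\Ga_{\b\ga}^\a\int_\T\partial_x(w^\a w^\b w^\ga)\,dx=0,
\end{align*}
where the last line uses that, by \eqref{3_eq:1.2} together with \eqref{3_eq:1.2-s-hat}, $\hat\Ga_{\b\ga}^\a$ is symmetric in all three indices. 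Without \eqref{3_eq:1.2-s-hat} there is no reason for this cancellation, and it is exactly this feature that prevents the quadratic drift from producing finite-time blow-up.

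Next I would prove non-explosion. Applying Itô's formula to $\mathcal{E}(\hat U^N(t)):=\|\hat U^N(t)\|_{L^2(\T;\R^d)}^2$, the Laplacian term contributes $-\|\partial_x\hat U^N\|_{L^2}^2\,dt\le0$, the drift contributes $2\lan\hat U^N,\hat F_N(\hat U^N)\ran\,dt=0$ by the energy identity, and the Itô correction from the noise is $c_N\,dt$ with $c_N=\sum_{\a}\sum_{1\le|k|\le N}4\pi^2k^2<\infty$; hence $\mathcal{E}(\hat U^N(t))\le\mathcal{E}(\hat U^N(0))+c_Nt+(\text{local martingale})$. A standard localization then shows $t\mapsto E[\mathcal{E}(\hat U^N(t))]$ grows at most linearly up to the explosion time, and since $\mathcal{E}$ is a coercive quadratic form on the finite-dimensional space $V_N$, the explosion time must be $+\infty$ almost surely for every deterministic — hence every — initial value. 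In particular $U^N$ is global in time.

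It remains to identify $\mu_A^N$ (equivalently $\mu_I^N$) as invariant. Let $\mathcal{G}_N$ denote the generator of $\hat U^N$ on $V_N$; it splits as $\mathcal{G}_N=\mathcal{G}_N^{\mathrm{OU}}+\mathcal{B}_N$, where $\mathcal{G}_N^{\mathrm{OU}}$ generates the projected Ornstein--Uhlenbeck process $\mathcal{L}\hat u=\partial_x\Pi_N\xi$ and $\mathcal{B}_N\phi(v)=\lan\hat F_N(v),\nabla\phi(v)\ran$. The measure $\mu_I^N$ is a non-degenerate Gaussian on $V_N$ with smooth strictly positive density $\propto e^{-\|v\|_{L^2}^2/2}$, it is invariant for $\mathcal{G}_N^{\mathrm{OU}}$ (the Gaussian equilibrium of the OU dynamics), and for the first-order part, integrating by parts against $\mu_I^N$ gives $\int\mathcal{B}_N\phi\,d\mu_I^N=\int\phi\,(\lan\hat F_N(v),v\ran-\operatorname{div}\hat F_N(v))\,d\mu_I^N$; here $\lan\hat F_N(v),v\ran=0$ by the energy identity, while $\operatorname{div}\hat F_N\equiv0$ since the Fourier-diagonal of the Jacobian of $\hat F_N$ involves only the (absent) zero mode. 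Thus $\int\mathcal{G}_N\phi\,d\mu_I^N=0$ for all $\phi\in C_c^\infty(V_N)$, i.e. $\mu_I^N$ is infinitesimally invariant; combined with global existence and the ellipticity of the diffusion on $V_N$, the usual argument $\frac{d}{dt}\int P_t^N\phi\,d\mu_I^N=\int\mathcal{G}_N(P_t^N\phi)\,d\mu_I^N=0$ yields that $\mu_I^N$, hence $\mu_A^N$, is invariant for $(U^N)$. The main obstacle will not be the algebra — the trilinear identity makes the energy identity automatic — but the soft-analytic step of turning infinitesimal invariance into genuine invariance; it is routine here thanks to the finite dimension, the ellipticity, and the smoothness and merely polynomial growth of $\hat F_N$ against the Gaussian $\mu_I^N$ (which legitimizes all the integrations by parts above), but it is the point that requires the most care to state cleanly.
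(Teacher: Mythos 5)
Your proposal is correct and follows essentially the same route as the paper: reduce the trilinear condition to full symmetry of $\hat\Ga$ (the paper does this via the equivalent tensor $\tilde\Ga_{\a\b\ga}=(A^{-1})_{\a\a'}\Ga^{\a'}_{\b\ga}$ rather than by explicitly conjugating $U^N\mapsto\t U^N$, but the two are the same change of coordinates), prove the energy identity $\lan v,\hat F_N(v)\ran_{L^2}=0$ by the cyclic integration by parts, use It\^o's formula on the $L^2$-norm plus localization for global existence, and then verify infinitesimal invariance of $\mu_A^N$ by splitting the generator into the OU part and the quadratic first-order drift, killing the latter with the energy identity together with divergence-freeness of $F_N$. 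Two minor remarks for the final write-up: (i) your justification that $\operatorname{div}\hat F_N\equiv0$ via the ``absent zero mode'' is cleanest in complex Fourier (Wirtinger) coordinates — in the real $\{e_k\}$ basis the individual diagonal Jacobian entries are nonzero and one instead needs $\sum_k(P_Ne_k)^2\equiv\text{const}$, which is what the paper computes; (ii) the soft step from infinitesimal to genuine invariance, which you flag correctly as the only delicate point, is handled in the paper by appealing to Echeverr\'ia's criterion \cite{Echeverria}, which is the standard way to avoid having to check that $P_t^N\phi$ stays in a convenient core.
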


\begin{proof}
Note that if we define $\tilde\Ga_{\a\b\ga}:=(A^{-1})_{\a\a'}\Ga_{\b\ga}^{\a'}$, then the condition \eqref{3_eq:1.2-s-hat} is equivalent to
\begin{align}\label{3_eq:equiv to hatGa}
\tilde\Ga_{\a\b\ga}=\tilde\Ga_{\a\ga\b}=\tilde\Ga_{\b\a\ga}.
\end{align}
Indeed, since $(A^{-1})_{\a\a'}=\sum_{\ga}\tau_\a^\ga\tau_{\a'}^\ga$ and $\tau_{\a'}^\a\Ga_{\b\ga}^{\a'}=\tau_\b^{\b'}\tau_\ga^{\ga'}\hat\Ga_{\b'\ga'}^\a$,
\begin{align*}
\tilde\Ga_{\a\b\ga}=\sum_{\ga'}\tau_\a^{\ga'}\tau_{\a'}^{\ga'}\Ga_{\b\ga}^{\a'}
=\sum_{\ga'}\tau_\a^{\ga'}\tau_\b^{\b'}\tau_\ga^{\ga''}\hat\Ga_{\b'\ga''}^{\ga'}.
\end{align*}
Thus $\hat\Ga_{\b'\ga''}^{\ga'}=\hat\Ga_{\ga'\ga''}^{\b'}$ leads to $\tilde\Ga_{\a\b\ga}=\tilde\Ga_{\b\a\ga}$. 
From \eqref{3_eq:equiv to hatGa}, we have the identity
\begin{align}\label{3_eq:int vani}
(A^{-1})_{\a\b}\langle F_N^\a(u),u^\b\rangle_{L^2(\T)}
&=-\tfrac12(A^{-1})_{\a\b}\Ga_{\b'\ga}^\a\langle P_Nu^{\b'}P_Nu^\ga ,\partial_xP_Nu^\b\rangle_{L^2(\T)}\\
\notag&=-\tfrac16\tilde\Ga_{\b\b'\ga}\int_\T \partial_x(P_Nu^{\b'}P_Nu^\ga P_Nu^\b)(x)dx=0.
\end{align}

We show the global existence of $U^N$. For this, we set
$$
H_t^N:=\sum_\a\|\tau^\a U^N(t)\|_{L^2(\T)}^2\equiv(A^{-1})_{\a\b}\langle U^{N,\a}(t),U^{N,\b}(t)\rangle.
$$
Then, by It\^o's formula, we have
\begin{align*}
dH_t^N
&=2(A^{-1})_{\a\b}\langle U^{N,\a}(t),dU^{N,\b}(t)\rangle
+(A^{-1})_{\a\b}\langle dU^{N,\a}(t),dU^{N,\b}(t)\rangle\\
&=-(A^{-1})_{\a\b}\langle \partial_xU^{N,\a},\partial_x U^{N,\b}\rangle (t) dt
+2(A^{-1})_{\a\b}\langle U^{N,\a},F_N^\b(U^N)\rangle(t)dt\\
&\quad+(A^{-1})_{\a\b}\langle\si_\ga^\a\partial_x\Pi_N\xi^\ga(dt),\si_\delta^\b\partial_x\Pi_N\xi^\delta(dt)\rangle+
2(A^{-1})_{\a\b}\langle U^{N,\a}(t),\si_\ga^\b\partial_x\Pi_N\xi^\ga(dt)\rangle\\
&\le C_Ndt+dM_t^N,
\end{align*}
where $C_N=d\sum_{|k|\le N}4\pi^2k^2$ and $M^N$ is a local martingale with the quadratic variation
\begin{align*}
d[M^N]_t&=4(A^{-1})_{\alpha\beta}\langle\partial_xU^{N,\alpha}(t),\partial_xU^{N,\beta}(t)\rangle dt\\
&=4(A^{-1})_{\a\b}\sum_{|k|\le N}(2\pi ik)\langle U^{N,\a}(t),e^{-2\pi ik\cdot}\rangle\overline{(2\pi ik)\langle U^{N,\b}(t),e^{-2\pi ik\cdot}\rangle}\\
&\le4(2\pi N)^2H_t^Ndt.
\end{align*}
By Doob's inequality, we have
\begin{align*}
E\left[\sup_{0\le t\le T}(M_t^N)^2\right]\le 4E[(M_T^N)^2]\le 64\pi^2N^2\int_0^TE(H_t^N)dt.
\end{align*}
Thus, we obtain
\begin{align*}
E\left[\sup_{0\le t\le T}H_t^N\right]&\le H_0^N+C_NT+E\left[\sup_{0\le t\le T}|M_t^N|\right]\\
&\le H_0^N+C_NT+1+E\left[\sup_{0\le t\le T}(M_t^N)^2\right]\\
&\le H_0^N+C_NT+1+64\pi^2N^2\int_0^TE(H_t^N)dt.
\end{align*}
Applying Gronwall's inequality, we obtain
\begin{align*}
E\left[\sup_{0\le t\le T}H_t^N\right]\le(H_0^N+C_NT+1)e^{64\pi^2N^2T}<\infty.
\end{align*}
This implies that the process $(U^N(t))_{0\le t<\infty}$ does not explode because $A^{-1}$ is non-degenerate.

Next we show the invariance of $\mu_A^N$. For the sake of simplicity, we consider the orthonormal basis
$$
e_k(x)=
\begin{cases}
\sqrt{2}\cos(2\pi kx),&k>0,\\
\sqrt{2}\sin(2\pi kx),&k<0,
\end{cases}
$$
of $H=\{f\in L^2(\T)\,;\int_\T f(x)dx=0\}$. We write $u^{\a,k}=\langle u^\a,e_k\rangle$ for $u=(u^\a)_{\a=1}^d\in H^d$. For every smooth function $\Phi:\R^{2Nd}(\simeq(\Pi_NH)^d)\to\R$, which has bounded derivatives, by It\^o's formula,
\begin{align*}
d\Phi(U^N)(t)&=\partial_{(\a,k)}\Phi(U^N)dU^{N,\a,k}(t)+\tfrac12\partial_{(\a,k)(\b,k)}^2\Phi(U^N)dU^{N,\a,k}dU^{N,\b,l}(t)\\
&=-\sum_{|k|\le N}2\pi^2k^2(U^{N,\a,k}\partial_{(\a,k)}-A^{\a\b}\partial_{(\a,k)(\b,k)}^2)\Phi(U^N)dt\\
&\quad+F_N^{\a,k}(U^N)\partial_{(\a,k)}\Phi(U^N)dt+\text{martingale}\\
&=:L_1^N\Phi(U^N)dt+L_2^N\Phi(U^N)dt+\text{martingale}.
\end{align*}
By Echeverr{\'i}a's criterion \cite{Echeverria}, to complete the proof of the invariance of $\mu_A^N$,
it suffices to show $\int(L_1^N+L_2^N)\Phi(u)\mu_A^N(du)=0$. Since $L_1$ is the generator of $\Pi_Nu_{\text{OU}}$, under which $\mu_A^N$ is invariant, we have
$$
\int L_1^N\Phi(u)\mu_A^N(du)=0.
$$
For $L_2$, note that under $\mu_A^N$ the $\R^d$-valued random variables $\{(u^{\a,k})\}_k$ are
independent and each of them has the distribution $\mathcal{N}(0,A)$. Since $\mathcal{N}(0,A)$ has a density function
$\ga_A(u^k), u^k\in \R^d,$ satisfying $\partial_{\b}\ga_A(u^k)=-(A^{-1})_{\a\b}u^{\a,k}\ga_A(u^k)$, by the integration by parts,
we have
\begin{align*}
\int L_2^N\Phi(u)\mu_A^N(du)
&=\sum_k\int_{\R^{2Nd}}\partial_{(\a,k)}\Phi(u)F_N^{\a,k}(u)\prod_{0<|l|\le N}\ga_A(u^l)du^l \\
&=\sum_k\int_{\R^{2Nd}}\Phi(u)F_N^{\a,k}(u)(A^{-1})_{\a\b}u^{\b,k}\prod_{0<|l|\le N}\ga_A(u^l)du^l\\
&=\int_{(\Pi_NH)^d}\Phi(u)(A^{-1})_{\a\b}\langle F_N^\a(u),u^\b\rangle_{L^2(\T)}\mu_A^N(du)=0
\end{align*}
from \eqref{3_eq:int vani}. Here we have used $\sum_k\partial_{(\a,k)}F_N^{\a,k}(u)=0$, which is shown as follows:
\begin{align*}
\sum_k\partial_{(\a,k)}F_N^{\a,k}(u)
&=\tfrac12\sum_k\langle\partial_{(\a,k)}\partial_x\Ga_{\b\ga}^\a P_N(P_Nu^\b P_Nu^\ga),e_k\rangle\\
&=\sum_k\langle\partial_x\Ga_{\a\b}^\a P_N (P_Nu^\b P_Ne_k),e_k\rangle\\
&=-\tfrac12\Ga_{\a\b}^\a\langle P_Nu^\b,\partial_x\sum_k(P_Ne_k)^2\rangle=0,
\end{align*}
since $\sum_k(P_Ne_k)^2=\sum_{k>0}\psi(N^{-1}k)^2(e_k^2+e_{-k}^2)=2\sum_{k>0}\psi(N^{-1}k)^2$ is a constant, so that it vanishes after applying $\partial_x$.
\end{proof}

If $\Pi_N^\bot u^N(t)\sim(\Pi_N^\bot)^{-1}\mu_A$ and $\Pi_Nu^N(t)\sim\mu_A^N$, then $u^N(t)\sim\mu_A$ by definition. As a consequence, we have the invariance of $\mu_A$ under $(u^N)$.

\begin{prop}\label{3_prop:muA is inv under uN}
If the trilinear condition \eqref{3_eq:1.2-s-hat} holds, the solution $u^N$ of \eqref{3_galerkin:stationary} exists globally in time, and admits $\mu_A$ as an invariant measure.
\end{prop}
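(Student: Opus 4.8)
The plan is to assemble the proposition from the ingredients already prepared in this section, the key observation being that the Galerkin dynamics \eqref{3_galerkin:stationary} decouples into two independent blocks. First I would record the decomposition $u^N=\Pi_N^\bot u_{\text{OU}}+U^N$ used in the discussion preceding the statement: since the nonlinearity $F_N^\a$ in \eqref{galerkin:def of F^N} is built from the Fourier multiplier $P_N=\psi(N^{-1}D)$, whose symbol is supported in $[-N,N]$, the function $F_N^\a(u^N)$ is again supported on $|k|\le N$ and depends only on $\Pi_N u^N$. Consequently $\Pi_N^\bot u^N$ satisfies the pure Ornstein--Uhlenbeck equation \eqref{3_eq:ou} on the high modes, while $\Pi_N u^N=U^N$ solves the finite-dimensional SDE \eqref{3_eq:finite galerkin}; moreover these two equations are driven, respectively, by the mutually independent noise families $\{\xi^{\b,k}\}_{|k|>N}$ and $\{\xi^{\b,k}\}_{|k|\le N}$.

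Global existence then follows at once: the high-mode component $\Pi_N^\bot u_{\text{OU}}$ is a Gaussian (Ornstein--Uhlenbeck) process and never explodes, and the low-mode component $U^N$ does not explode by Lemma \ref{galerkin:lemm:U^N does not explode}, which is precisely where the trilinear condition \eqref{3_eq:1.2-s-hat} is used (through the cancellation \eqref{3_eq:int vani}). Summing the two blocks yields a solution of \eqref{3_galerkin:stationary} defined for all $t\in[0,\infty)$.

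For the invariance, I would start from an initial condition $u^N(0)\sim\mu_A$ taken independently of $\xi$. Because $\mu_A$ is a product measure over the Fourier modes --- under $\mu_A$ the $\R^d$-valued variables $\{(u^{\a,k})_\a\}_k$ are independent, each with law $\mathcal{N}(0,A)$ --- the two projections $\Pi_N^\bot u^N(0)$ and $\Pi_N u^N(0)$ are independent, with laws $(\Pi_N^\bot)^{-1}\mu_A$ and $(\Pi_N)^{-1}\mu_A=\mu_A^N$ respectively. Since the high- and low-mode dynamics decouple and are fed by independent noise, the components $\Pi_N^\bot u^N(t)$ and $\Pi_N u^N(t)=U^N(t)$ remain independent for every $t$. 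The invariance of $\mu_A$ under the Ornstein--Uhlenbeck flow \eqref{3_eq:ou}, established above, gives $\Pi_N^\bot u^N(t)\sim(\Pi_N^\bot)^{-1}\mu_A$, and Lemma \ref{galerkin:lemm:U^N does not explode} gives $U^N(t)\sim\mu_A^N$; combining the two independent marginals recovers $u^N(t)\sim\mu_A$, i.e.\ $\mu_A$ is invariant.

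Since this is essentially a bookkeeping argument, I do not anticipate a serious obstacle; the only points needing care are the decoupling claim --- that $F_N^\a(u^N)$ both affects and depends on only the modes $|k|\le N$, so that $\Pi_N^\bot u^N$ genuinely solves the linear equation --- and the fact that independence of the two blocks is preserved under the joint flow, which follows from the independence of the driving noises together with the absence of coupling from the low block back into the high block.
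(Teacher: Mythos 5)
Your proposal is correct and follows the same route as the paper: decompose $u^N=\Pi_N^\bot u_{\text{OU}}+U^N$, observe that the two blocks are driven by disjoint and hence independent noise families so their independence persists in time, use the invariance of $\mu_A$ under the Ornstein--Uhlenbeck flow for the high modes and Lemma~\ref{galerkin:lemm:U^N does not explode} for the low modes, and combine the two independent marginals to recover $\mu_A$. You are in fact slightly more explicit than the paper on the one non-trivial bookkeeping point, namely that the independence of $\Pi_N^\bot u^N(t)$ and $\Pi_N u^N(t)$ must be preserved for all $t>0$ (not just at $t=0$) before one may conclude $u^N(t)\sim\mu_A$ from the two marginal laws; the paper leaves this implicit in the phrase ``by definition.''
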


\subsection{Global existence for a.e.-initial values}\label{subsection:global existence for ae initial value}

Let $\mathbb{U},\mathbb{U}^N$ be the $\mathcal{U}_{\text{CSB}}^\k$-random variables defined from the space-time white noise $\xi$, corresponding to the equations \eqref{3_eq:sbe} and \eqref{3_galerkin:stationary}, respectively. The following result is obtained similarly to Theorem \ref{3_conv of driv} and the proof is omitted.

\begin{theo}
For every $T>0$ and $p\ge1$,
$$
\lim_{N\to\infty}E\$\mathbb{U}^N-\mathbb{U}\$_T^p=0.
$$
In particular, $u^N=S_{\text{\rm CSB}}^N(v(0),w(0),\mathbb{U}^N)$ converges to $u=S_{\text{\rm CSB}}(v(0),w(0),\mathbb{U})$ in probability as $N\to\infty$ in $C([0,T_{\text{\rm sur}}),(\mathcal{C}_0^{(\k-1)\wedge\mu\wedge2\mu})^d)$.
\end{theo}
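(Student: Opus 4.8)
The plan is to reproduce the proof of Theorem~\ref{3_conv of driv} (i.e.\ Theorem~5.1 of \cite{Ho}) with the mollifier $\fa(\e D)$ replaced throughout by the Fourier multiplier $P_N=\psi(N^{-1}D)$ and $\e$ by $N^{-1}$. Because each slot of $\mathbb U$ and of $\mathbb U^N$ is the $x$-derivative of the corresponding KPZ-level object, the divergent $\mathcal H_0$-components are annihilated and no renormalization enters; moreover every one of the seven entries of $\mathbb U^N$ (and of $\mathbb U$) is a sum of multiple Wiener--It\^o integrals lying in a fixed finite set of chaoses $\bigoplus_{n\le 4}\mathcal H_n$. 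Hence, by Gaussian hypercontractivity, it suffices to control the second moments $E\bigl[|\triangle_j(U_\circ^N-U_\circ)(t,x)|^2\bigr]$ of the Littlewood--Paley blocks of the differences, together with the second moments of their increments in $t$ for the slot $U_\W$, and then to pass to the $\mathcal U_{\text{CSB}}^\k$-norm through the Besov embedding $\mathcal B_{2p,2p}^{\zeta}\subset\mathcal B_{\infty,\infty}^{\zeta-1/(2p)}$ with $p$ large and a Kolmogorov continuity argument in $t$, exactly as in Section~\ref{3_section 3}.

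The only genuinely new ingredient is the comparison of kernels. Using Proposition~\ref{3_prop:prod}, each entry of $\mathbb U^N$ is written as a multiple Wiener--It\^o integral whose kernel is the one computed in Section~\ref{3_section 3} (for instance \eqref{3_eq:4.f}, \eqref{3_eq:4.4}) with one factor $\psi(N^{-1}k)$ inserted for every occurrence of $P_N$, at the appropriate frequency $k$. Telescoping, $\prod_i\psi(N^{-1}k_i)-1=\sum_i\bigl(\psi(N^{-1}k_i)-1\bigr)\prod_{j<i}\psi(N^{-1}k_j)$, and using $|\psi(N^{-1}k)-1|\lesssim(|k|N^{-1})^\delta$ for every $\delta\in[0,2]$ --- the Fourier-side form of the approximation property $\|P_Nu-u\|_{\mathcal C^{\alpha-\delta}}\lesssim N^{-\delta}\|u\|_{\mathcal C^\alpha}$ recalled before Theorem~\ref{3_galerkin:theo} --- one bounds the kernel of $U_\circ^N-U_\circ$ by $N^{-\delta}$ times the absolute value of the original kernel carrying an extra algebraic weight $|k_i|^\delta$ on one of its frequency legs. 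Feeding this into the summation estimates of Section~\ref{3_section 3} costs at most a factor $2^{j\delta}$ on the $j$-th block, which for $\delta>0$ small is recovered by the positive margin of spatial regularity --- of order $\tfrac12-\k>0$ --- present in every slot of $\mathcal U_{\text{CSB}}^\k$; the same margin also absorbs the $1/(2p)$ lost in the Besov embedding. This gives $E\bigl[|\triangle_j(U_\circ^N-U_\circ)(t,x)|^2\bigr]\lesssim N^{-2\delta}2^{-2j\zeta_\circ}$ uniformly in $x$, with $\zeta_\circ$ the regularity exponent of the slot, hence, after hypercontractivity, summing in $j$, integrating in $x$ and running Kolmogorov in $t$, $E\$\mathbb U^N-\mathbb U\$_T^p\lesssim N^{-p\delta}\to0$ for every $T>0$ and $p\ge1$; taking $\delta=0$ yields $\sup_N E\$\mathbb U^N\$_T^p<\infty$.

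The time-H\"older slot $U_\W^N\in C^{1/4}([0,T],(\mathcal C_0^{\k-1/2})^d)$ is handled as in Section~5 of \cite{Ho}: one splits $\triangle_j\bigl(U_\W^N(t)-U_\W^N(s)\bigr)$ according to whether the temporal heat factors act before or after the time $s$, estimates each piece by the kernel bounds above, and obtains a factor $|t-s|^{1/2}$ at the second-moment level; hypercontractivity and Kolmogorov upgrade this to the $C^{1/4}$-norm, with the gain $N^{-\delta}$ preserved throughout. Finally, since $\$\mathbb U^N-\mathbb U\$_T\to0$ in $L^p$, hence in probability, Theorem~\ref{3_galerkin:theo} (the continuity of $S_{\text{CSB}}^N$ and $S_{\text{CSB}}$ up to the lower semicontinuous survival time, together with $S_{\text{CSB}}^N\to S_{\text{CSB}}$) and a standard subsequence argument transfer this to the convergence in probability of $u^N=S_{\text{CSB}}^N(v(0),w(0),\mathbb U^N)$ to $u=S_{\text{CSB}}(v(0),w(0),\mathbb U)$ in $C([0,T_{\text{\rm sur}}),(\mathcal C_0^{(\k-1)\wedge\mu\wedge2\mu})^d)$.

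The main obstacle is purely \emph{organizational}: in $U_\Y^N$, $U_\W^N$ and --- above all --- in the resonant slots $U_\D^N=\partial_x(P_NU_\W^N\rs P_NU_\I^N)$ and $U_\Co^N=P_NU_\K^N\rs P_NU_\I^N$, several $P_N$'s occur simultaneously inside and outside convolutions and resonant products, and one must check that the commutator/paraproduct structure used in Section~\ref{3_section 2} is untouched by the truncation and that the weight $|k_i|^\delta$ can always be moved onto a frequency leg whose sum converges. I expect this bookkeeping --- rather than any new analytic estimate --- to be the only delicate point; with $\fa(\e\,\cdot)$ replaced by $\psi(N^{-1}\cdot)$, everything else is a verbatim repetition of Section~\ref{3_section 3}.
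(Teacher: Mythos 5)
Your proposal is correct and takes essentially the same route the paper implies: the authors explicitly write that this result ``is obtained similarly to Theorem~\ref{3_conv of driv} and the proof is omitted,'' and your sketch --- replace $\fa(\e D)$ by $P_N=\psi(N^{-1}D)$ throughout the driver computations of Section~\ref{3_section 3}, use $|\psi(N^{-1}k)-1|\lesssim(|k|N^{-1})^\delta$ with the $\tfrac12-\k$ margin to absorb the extra algebraic weight, note that the $\mathcal H_0$-components drop out after $\partial_x$ (resp.\ by evenness of $\psi$ for $U_\Co$), upgrade second-moment bounds on Littlewood--Paley blocks by hypercontractivity and a Kolmogorov argument, and then invoke the deterministic Theorem~\ref{3_galerkin:theo} plus a subsequence argument --- is precisely the argument being referenced.

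One small precision worth adding: unlike $\tilde{\mathbb H}^\epsilon$, in $\mathbb U^N$ the noise itself is \emph{not} mollified, so $U_\I^N=U_\I$ exactly and the truncations $\psi(N^{-1}k_i)$ enter only inside the nonlinear slots (e.g.\ the $U_\Y^N$ kernel carries $\psi(N^{-1}k_1)\psi(N^{-1}k_2)\psi(N^{-1}(k_1+k_2))$ rather than $\fa(\e k_1)\fa(\e k_2)\fa^2(\e(k_1+k_2))$). Your telescoping identity handles this placement without difficulty, and as you correctly note the only real work left is the bookkeeping of how $P_N$ threads through the paraproduct/resonant structure in the $U_\D^N$ and $U_\Co^N$ slots --- which is exactly what a careful repetition of Theorem~5.1 of \cite{Ho} would supply.
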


From now we set $\mu=\tfrac{\k-1}2$, so that $(\k-1)\wedge\mu\wedge2\mu=\k-1$. We consider initial values $u(0)\in (\mathcal{C}_0^{\k-1})^d$ and set
\begin{align}\label{3_eq:ini of v w}
v(0)=0,\quad
w(0)=u(0)-U_\I(0)-U_\Y(0)-U_\W(0).
\end{align}
We can prove the following result in a similar way to Theorem 5.1 of \cite{DPD}. Note that if $T_{\text{sur}}$ is finite then
\begin{align}\label{3_eq:u explodes}
\lim_{t\uparrow T_{\text{sur}}}\|u\|_{C([0,t],(\mathcal{C}_0^{\k-1})^d)}=\infty,
\end{align}
cf., Theorem \ref{3_thm:3}-(2).  Our main result of this section is formulated as follows.

\begin{theo}  \label{3_thm:5.7}
We assume the trilinear condition \eqref{3_eq:1.2-s-hat}.  Then, for every $T>0$ and $\mu_A$-a.e.\ $u(0)\in(\mathcal{C}_0^{\k-1})^d$, 
there exists a unique solution $(v,w)\in\mathcal{D}_{\text{\rm CSB}}^{\la,\mu}([0,T])$ of the system \eqref{3_sys:sbe} with initial values \eqref{3_eq:ini of v w}, which satisfies for every $p\ge1$,
\[E\|S_{\text{\rm CSB}}(v(0),w(0),\mathbb{U})\|_{C([0,T],(\mathcal{C}_0^{\k-1})^d)}^p<\infty.\]
In particular, $T_{\text{\rm sur}}=\infty$ a.s. Furthermore, $u=S_{\text{\rm CSB}}(v(0),w(0),\mathbb{U})$ is a Markov process on $(\mathcal{C}_0^{\k-1})^d$ which admits $\mu_A$ as an invariant measure.
\end{theo}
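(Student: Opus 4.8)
The plan is to follow the Da Prato--Debussche strategy exploited in \cite{DPD}, now in the paracontrolled setting of the coupled stochastic Burgers equation. The local solution $(v,w)\in\mathcal{D}_{\text{CSB}}^{\la,\mu}([0,T_{\text{sur}}))$ is already provided by the construction of $S_{\text{CSB}}$; the issue is to upgrade it to a global solution $\mu_A$-almost surely and identify $\mu_A$ as an invariant measure. The key input is the Galerkin approximation $u^N=S_{\text{CSB}}^N(v^N(0),w^N(0),\mathbb{U}^N)$, for which Proposition \ref{3_prop:muA is inv under uN} asserts that $\mu_A$ is invariant once the trilinear condition \eqref{3_eq:1.2-s-hat} holds, together with the convergence $\mathbb{U}^N\to\mathbb{U}$ in $\mathcal{U}_{\text{CSB}}^\k$ for all $L^p$-moments and the continuity of $S_{\text{CSB}}^N\to S_{\text{CSB}}$ from Theorem \ref{3_galerkin:theo}.

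First I would run the approximation in stationarity: take $u^N(0)\sim\mu_A$, independent of $\xi$, and split $u^N(0)$ into the $\mathbb{U}^N(0)$-part plus the remainder $(v^N(0),w^N(0))$ as in \eqref{3_eq:ini of v w}. Because $\mu_A$ is invariant for $(u^N)$, the law of $u^N(t)$ is $\mu_A$ for every $t$; since the solution map is continuous and $\mathbb{U}^N\to\mathbb{U}$, for each fixed $T$ the processes $u^N$ converge in probability in $C([0,T_{\text{sur}}^N),(\mathcal{C}_0^{\k-1})^d)$ to $u$ on the interval where both survive. The stationarity of $u^N(t)$ under $\mu_A$ gives a uniform-in-$N$ bound on the expectation of $\sup_{t\le T}\|u^N(t)\|_{(\mathcal{C}_0^{\k-1})^d}^p$ along a suitable subsequence — here one uses that $\mu_A$ has finite $p$-th moments on $(\mathcal{C}_0^{\k-1})^d$ (the Besov-embedding lemma proved above). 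Passing to the limit with Fatou, one gets $E\|u\|_{C([0,T],(\mathcal{C}_0^{\k-1})^d)}^p<\infty$, which by the blow-up criterion \eqref{3_eq:u explodes} forces $T_{\text{sur}}>T$ a.s.; letting $T\uparrow\infty$ gives $T_{\text{sur}}=\infty$ a.s. and hence global existence for $\mu_A$-a.e.\ initial value.

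To pass from global existence in stationarity to global existence for $\mu_A$-a.e.\ deterministic initial value, I would use a conditioning argument: disintegrate the stationary ensemble over $u(0)$, so that the set of initial data for which $u$ fails to be global has $\mu_A$-measure zero. Markovianity of $u=S_{\text{CSB}}(v(0),w(0),\mathbb{U})$ on $(\mathcal{C}_0^{\k-1})^d$ follows from the flow property of the deterministic solution map together with the independence of increments of the driving noise (the driver $\mathbb{U}$ is built from $\xi$, and restarting at time $s$ uses $\xi|_{[s,\infty)}$); invariance of $\mu_A$ is then inherited in the limit $N\to\infty$ from the invariance under $(u^N)$, since the transition semigroups converge on bounded continuous cylinder functions by the convergence $u^N\to u$ in probability and the uniform moment bound, which justifies the required uniform integrability when testing against $\mu_A$.

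The main obstacle is the uniform-in-$N$ moment control: one must ensure that the bound on $E\sup_{t\le T}\|u^N(t)\|^p$ comes from the stationarity alone and does not deteriorate as $N\to\infty$, and that the decomposition of $u^N$ into $\mathbb{U}^N$-drivers plus $(v^N,w^N)$ is compatible with taking $u^N(0)\sim\mu_A$ (so that $(v^N(0),w^N(0))$ has the moments needed to feed into Theorem \ref{3_galerkin:theo} and the resulting limit is genuinely the solution with initial datum $u(0)$, not merely some subsequential limit). Once that uniform integrability is in place, the remaining steps — Fatou, the blow-up criterion, the conditioning to remove the stationarity assumption, and the limiting identification of the invariant measure — are routine adaptations of \cite[Theorem 5.1]{DPD}.
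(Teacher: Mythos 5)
Your outline correctly identifies the overall strategy (Galerkin approximation, invariance of $\mu_A$ under $u^N$ from Proposition \ref{3_prop:muA is inv under uN}, convergence $S_{\text{CSB}}^N\to S_{\text{CSB}}$, then a Da Prato--Debussche argument), and this is indeed the route the paper takes. However, the two technical steps that you flag as ``routine'' are precisely where the argument requires real work, and your sketch of them contains genuine gaps.

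First, the uniform-in-$N$ moment bound on $E\bigl[\sup_{t\le T}\|u^N(t)\|^p_{(\mathcal{C}_0^{\k-1})^d}\bigr]$ does \emph{not} come from stationarity alone, as you claim. Stationarity only controls the marginal law $u^N(t)\sim\mu_A$ at each fixed $t$; passing from fixed-time marginals to a running supremum requires a quantitative local well-posedness estimate. The mechanism is: the local theory provides a deterministic time $t_M^*=C'M^{-2/(\k-\la)}\wedge T$ such that $\sup_{t\le t_M^*}\|u^N(t,u(0))\|\le M$ whenever $\|u(0)\|\le CM$ and $\$\mathbb{U}^N\$_T^3\le CM$; one then partitions $[0,T]$ into $O(T/t_M^*)$ such intervals, applies a union bound, and at each partition point uses $u^N(kt_M^*)\sim\mu_A$ together with the Gaussian tail of $\mu_A$, plus the uniform driver moment bound $\sup_N E\$\mathbb{U}^N\$_{2T}^p<\infty$, to obtain $P(\sup_{t\le T}\|u^N(t,u(0))\|>M)\lesssim_p M^{-p}$ after integrating against $\mu_A(du(0))$. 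Without spelling out this chopping argument your claim that ``stationarity gives a uniform-in-$N$ sup bound'' is not justified, and the driver moments in particular play an indispensable role you have left implicit.

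Second, the passage to the limit via Fatou is too quick because the convergence $u^N\to u$ is only available in probability on compact subintervals of $[0,T_{\text{sur}})$, not on $[0,T]$, and a priori $T_{\text{sur}}$ could be $<T$. Applying Fatou to $\sup_{t\le T}\|u^{N_k}(t)\|$ presupposes some form of convergence of that supremum, which is exactly what is unavailable before the blow-up criterion has been ruled out. The paper circumvents this by extracting a weak$^*$ convergent subsequence of $\sup_{t\le T}\|u^{N_k}(\cdot)\|$ in $L^p(\Omega\times(\mathcal{C}_0^{\k-1})^d,P\times\mu_A)$, then working on the truncated intervals $[0,(\tfrac{m-1}{m}T_{\text{sur}})\wedge T]$ where the convergence $u^{N_k}\to u$ does hold, identifying the restricted supremum with a weak$^*$ limit $M^m\le M$ for each $m$, and finally letting $m\to\infty$ to conclude $\|u\|_{C([0,T_{\text{sur}}\wedge T),(\mathcal{C}_0^{\k-1})^d)}\le M$ $P\times\mu_A$-a.e., which then rules out blow-up via \eqref{3_eq:u explodes}. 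Your ``Fatou'' sentence does not address the issue that the interval of convergence is random and potentially shorter than $[0,T]$, which is the heart of the matter. The remaining parts of your sketch (disintegration to get $\mu_A$-a.e.\ deterministic initial values, Markov property from the flow identity and independence of $\tau_s\xi$ from $\mathcal{F}_s$, and invariance of $\mu_A$ inherited from $u^N$) are accurate and match the paper.
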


By the equivalence of $S_{\text{\rm KPZ}}$ and $S_{\text{\rm CSB}}$ shown in Theorem \ref{3_kpz equiv csb}, 
we have $T_{\text{\rm sur}}^{\text{\rm KPZ}}=\infty$ a.s. when $\partial_xh(0)$ is taken from $\mu_A$-full set 
of $(\mathcal{C}_0^{\k-1})^d$.  This together with Theorem \ref{3_thm:5.7} implies Theorem \ref{3_thm:1.3}.

\begin{proof}[Proof of Theorem \ref{3_thm:5.7}]
We denote by $u^N(\cdot,u(0))$ the solution of (\ref{3_galerkin:stationary}) with initial value $u(0)$. The local existence result like Theorem \ref{3_thm:3}-(1) holds even for the stochastic Burgers equation, which implies that there exist $C,C'>0$ independent of $N$ such that, for given $M>0$, if
\[\$\mathbb{U}^N\$_T^3\le CM,\quad
\|u(0)\|_{(\mathcal{C}_0^{\k-1})^d}\le CM,\]
then
\[\sup_{t\in[0,t_M^*]}\|u^N(t,u(0))\|_{(\mathcal{C}_0^{\k-1})^d}\le M,
\quad t_M^*=C'M^{-\frac{2}{\k-\la}}\wedge T.\]
This yields that for every $u(0)\in(\mathcal{C}_0^{\k-1})^d$,
\begin{align*}
&P(\sup_{t\in[0,T]}\|u^N(t,u(0))\|_{(\mathcal{C}_0^{\k-1})^d}>M)\\
&\le\sum_{k=0}^{[T/t_M^*]}P(\sup_{t\in[kt_M^*,(k+1)t_M^*]}\|u^N(t,u(0))\|_{(\mathcal{C}_0^{\k-1})^d}>M)\\
&\le\sum_{k=0}^{[T/t_M^*]}P(\|u(kt_M^*,u(0))\|_{(\mathcal{C}_0^{\k-1})^d}>CM)
+([T/t_M^*]+1)P(\$\mathbb{U}^N\$_{2T}^3>CM).
\end{align*}
Since $\sup_NE[\$\mathbb{U}^N\$_{2T}^p]<\infty$ for every $p\ge1$, the second term is bounded as
\[([T/t_M^*]+1)P(\$\mathbb{U}^N\$_{2T}^3>CM)\lesssim_p M^{-p}.\]
For the first term, since $\mu_A$ is invariant under $u^N$ and $\mu_A$ is Gaussian, we have
\begin{align*}
&\int_{(\mathcal{C}_0^{\k-1})^d}P(\|u(kt_M^*,u(0))\|_{(\mathcal{C}_0^{\k-1})^d}>CM)\mu_A(du(0))\\
&=\mu_A(\|u(0)\|_{(\mathcal{C}_0^{\k-1})^d}>CM)\lesssim_p M^{-p}.
\end{align*}
These are summarized into the bound
\[\int_{(\mathcal{C}_0^{\k-1})^d}P(\sup_{t\in[0,T]}\|u^N(t,u(0))\|_{(\mathcal{C}_0^{\k-1})^d}>M)\mu_A(du(0))\lesssim_pM^{-p},\]
for every $p\ge1$, which leads
\[\int_{(\mathcal{C}_0^{\k-1})^d}E[\sup_{t\in[0,T]}\|u^N(t,u(0))\|_{(\mathcal{C}_0^{\k-1})^d}^p]\mu_A(du(0))\lesssim_p1.\]
Since bounded set in $L^p(\Omega\times(\mathcal{C}_0^{\k-1})^d,P\times\mu_A)$ is weak$^*$ compact for $p>1$, there exists a subsequence $\{N_k\}$ and $M\in L^p(\Omega\times(\mathcal{C}_0^{\k-1})^d,P\times\mu_A)$ such that
\[\sup_{t\in[0,T]}\|u^{N_k}(t,u(0))\|_{(\mathcal{C}_0^{\k-1})^d}\to M\]
as $k\to\infty$ in weak$^*$ topology. 
On the other hand, for every $u(0)\in(\mathcal{C}_0^{\k-1})^d$ and $m\in\mathbb{N}$, $u^{N_k}$ converges to $u=S_{\text{CSB}}(v(0),w(0),\mathbb{U})$ in probability in the space $C([0,(\frac{m-1}{m} T_{\text{sur}})\wedge T],(\mathcal{C}_0^{\k-1})^d)$. Since $\|u^{N_k}(\cdot,u(0))\|_{C([0,(\frac{m-1}{m} T_{\text{sur}})\wedge T],(\mathcal{C}_0^{\k-1})^d)}$ is also bounded in $L^p(\Omega\times(\mathcal{C}_0^{\k-1})^d,P\times\mu_A)$, there exists a subsequence $\{N_{k_m}\}$ and $M^m\in L^p(\Omega\times(\mathcal{C}_0^{\k-1})^d,P\times\mu_A)$ such that
\[\sup_{t\in[0,(\frac{m-1}{m} T_{\text{sur}})\wedge T]}\|u^{N_{k_m}}(\cdot,u(0))\|_{(\mathcal{C}_0^{\k-1})^d}\to M^m\]
as $k_m\to\infty$ in weak$^*$ topology for every $m\in \N$. From the uniqueness of the limit, we have
\[\|u\|_{C([0,(\frac{m-1}{m} T_{\text{sur}})\wedge T],(\mathcal{C}_0^{\k-1})^d)}=M^m\le M,\quad
\text{$P\times\mu_A$-a.e.}\]
Taking the limit $m\to\infty$, we have
\[\|u\|_{C([0,T_{\text{sur}}\wedge T),(\mathcal{C}_0^{\k-1})^d)}\le M,\quad
\text{$P\times\mu_A$-a.e.}\]
This implies that for $\mu_A$-a.e.\ $u(0)$, $u(t)$ does not explode as $t\uparrow T_{\text{sur}}$ against \eqref{3_eq:u explodes},
so that $T_{\text{sur}}\ge T$ a.s. Since $T$ is arbitrary, $T_{\text{sur}}=\infty$ a.s.

Markov property of $u$ is obtained as follows. Let $u=u(\cdot,u(0),\mathbb{U})\in C([0,T_{\text{sur}}),(\mathcal{C}_0^{\k-1})^d)$ be the solution starting at $u(0)$ and driven by $\mathbb{U}$. Now we introduce the space $(\mathcal{C}_0^{\k-1})^d\cup\{\Delta\}$ and the distance
$$
d(u,\Delta)=(1+\|u\|_{(\mathcal{C}_0^{\k-1})^d})^{-1},
$$
and, by defining $u(t)=\Delta$ for $t\ge T_{\text{sur}}$, we regard $u$ as a random variable taking values in $C([0,\infty),(\mathcal{C}_0^{\k-1})^d\cup\{\Delta\})$. By the uniqueness result, we have the identity
\[u(t,u(s,u(0),\mathbb{U}),\tau_s\mathbb{U})=u(s+t,u(0),\mathbb{U}),\quad s,t\ge0,\]
where $\tau_s$ is a shift operator defined by $\tau_s\mathbb{U}(\cdot)=\mathbb{U}(\cdot+s)$. 
Let $\mathcal{F}_t$ be the $\si$-algebra generated by $\{\xi(\mathbf{1}_{(r,s]}\cdot)\}_{-\infty<r<s\le t}$. 
Then from the construction of the solution, for deterministic element $v$, $u(s,v,\mathbb{U})$ is $\mathcal{F}_s$-measurable. 
Moreover, $u(t,v,\tau_s\mathbb{U})$ is independent of $\mathcal{F}_s$ because it coincides with the solution of (the approximation of)
\begin{align*}
\begin{cases}
\partial_tu^\alpha=\tfrac12\partial_x^2u+\tfrac12\Gamma_{\beta\gamma}^\alpha\partial_x(u^\beta u^\gamma)+\sigma_\beta^\alpha\partial_x(\tau_s\xi^\beta)&t>0,\\
u(0,\cdot)=v.
\end{cases}
\end{align*}
As a consequence, for every bounded measurable function $\Phi$ on $(\mathcal{C}_0^{\k-1})^d\cup\{\Delta\}$,
we have
\[E[\Phi(u(s+t,u(0),\mathbb{U}))|\mathcal{F}_s]=P_{st}(u(s,u(0),\mathbb{U})),\]
where $P_{st}(v):=E[\Phi(u(t,v,\tau_s\mathbb{U}))]$. In particular,  when $u(0)\sim\mu_A$ independently of $\xi$,
since $T_{\text{sur}}=\infty$ almost surely, $u$ is a Markov process on $(\mathcal{C}_0^{\k-1})^d$. 
The invariance of $\mu_A$ under $(u(t))_{t\ge0}$ is an immediate consequence of the convergence $u^N(t)\to u(t)$
and Proposition \ref{3_prop:muA is inv under uN}.
\end{proof}

\subsection{Global existence for two coupled KPZ approximating equations}
\label{section 6}

The derivatives $u=\partial_xh^\e$ and
$\tilde u=\partial_x\tilde h^\e$ of the solutions $h^\e$
and $\tilde h^\e$ of the coupled KPZ approximating equations
\eqref{3_eq:1-2} and \eqref{3_eq:1} solve the following coupled stochastic Burgers equations
\begin{align}\label{eq:6.1}
\partial_tu^\a=\tfrac12\partial_x^2u^\a
+\tfrac12\Ga_{\b\ga}^\a\partial_x(u^\b u^\ga)
+\si_\b^\a\partial_x\xi^\b*\eta^\e,
\end{align}
and
\begin{align}\label{eq:6.2}
\partial_t \tilde u^\a=\tfrac12\partial_x^2 \tilde u^\a
+\tfrac12\Ga_{\b\ga}^\a\partial_x\{( \tilde u^\b  \tilde u^\ga)*\eta_2^\e\}
+\si_\b^\a\partial_x\xi^\b*\eta^\e,
\end{align}
respectively. We show the global existence of the solutions of these two equations. Since $\e>0$ is fixed, we drop the
superscripts $\e$ from $u^\e$ and $\tilde u^\e$ and
simply write $u$ and $\tilde u$, respectively.

\subsubsection{The equation \eqref{eq:6.1}}

For the equation \eqref{eq:6.1}, we apply the classical method of the energy inequality
to show the global well-posedness.

\begin{lemm}
Assume $E\big[\|u(0)\|_{L^2(\T,\R^d)}^2\big] < \infty$. Then, for every $T>0$, we have
$$
E\left[ \sup_{0\le t \le T} \|u(t)\|_{L^2(\T,\R^d)}^2\right] < \infty.
$$
In particular, $T_{\rm sur}=\infty$ a.s.
\end{lemm}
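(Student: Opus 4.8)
The plan is to run the classical energy estimate on the $A^{-1}$-weighted $L^2$-norm, i.e. on exactly the quantity used in the proof of Lemma~\ref{galerkin:lemm:U^N does not explode}. Set
$E(t):=(A^{-1})_{\a\b}\langle u^\a(t),u^\b(t)\rangle_{L^2(\T)}$, which is equivalent to $\|u(t)\|_{L^2(\T,\R^d)}^2$ since $A=\si\si^{\mathrm T}$ is positive definite; hence it suffices to bound $E\big[\sup_{0\le t\le T}E(t)\big]$. For a fixed $\e>0$ the driving noise $\si_\b^\a\partial_x\xi^\b*\eta^\e=\si_\b^\a(\xi^\b*(\eta^\e)')$ is white in time and \emph{smooth} in space; its covariance operator on $L^2(\T,\R^d)$ is trace class, with trace $C_\e:=d\sum_{k\in\Z}4\pi^2k^2\fa(\e k)^2<\infty$ (recall $\fa\in\mathcal{S}(\R)$). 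So \eqref{eq:6.1} is an honest $L^2(\T,\R^d)$-valued equation and Itô's formula for $\|\cdot\|_{L^2}^2$ applies; to make this rigorous one may first carry the computation out on a finite-dimensional (Galerkin/spectral) truncation of \eqref{eq:6.1}, where Itô's formula is elementary, and pass to the limit as in Section~\ref{subsection:galerkin approximation}, or invoke the trace-class structure together with the localization below.

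Applying Itô's formula gives
$$
dE(t)=\Big[-(A^{-1})_{\a\b}\langle\partial_x u^\a,\partial_x u^\b\rangle_{L^2(\T)}
-\tilde\Ga_{\a\b\ga}\int_\T\partial_x u^\a\,u^\b u^\ga\,dx+C_\e\Big]dt+dM_t,
$$
where $\tilde\Ga_{\a\b\ga}=(A^{-1})_{\a\a'}\Ga_{\b\ga}^{\a'}$ and $M$ is a local martingale with $d[M]_t\lesssim_\e E(t)\,dt$ (in Fourier the noise covariance acts as the bounded multiplier $4\pi^2k^2\fa(\e k)^2$). The first bracketed term is $\le0$ because $A^{-1}$ is positive definite, and may simply be discarded. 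The Burgers term vanishes: under the trilinear condition \eqref{3_eq:1.2-s-hat}, in force throughout this section, the tensor $\tilde\Ga$ is fully symmetric by \eqref{3_eq:equiv to hatGa}, so $\tilde\Ga_{\a\b\ga}\int_\T\partial_x u^\a\,u^\b u^\ga=\tfrac13\int_\T\partial_x(\tilde\Ga_{\a\b\ga}u^\a u^\b u^\ga)=0$ — the same cancellation \eqref{3_eq:int vani} as for the Galerkin system. Hence $dE(t)\le C_\e\,dt+dM_t$.

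The conclusion is then a routine stochastic Gronwall/BDG argument. Fix $T>0$ and stopping times $\tau_n\uparrow T_{\mathrm{sur}}$ at which $\|u\|_{L^2}$ reaches level $n$; on $[0,T\wedge\tau_n]$ one has $E[\sup_tE(t)]\le E[E(0)]+C_\e T+E[\sup_t|M_t|]$, and by the Burkholder--Davis--Gundy inequality together with $d[M]_t\lesssim_\e E(t)\,dt$ and Young's inequality, $E[\sup_t|M_t|]\le\tfrac12E[\sup_tE(t)]+C_\e'T$. Since $E[E(0)]\le\|A^{-1}\|_{\mathrm{op}}E[\|u(0)\|_{L^2}^2]<\infty$, this gives $E\big[\sup_{0\le t\le T\wedge\tau_n}E(t)\big]\le 2E[E(0)]+(2C_\e+2C_\e')T$ uniformly in $n$, and letting $n\to\infty$ the same bound holds on $[0,T\wedge T_{\mathrm{sur}})$. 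Finally, since $\k<\tfrac12$ the Besov embedding $L^2(\T)\hookrightarrow\mathcal{C}^{-1/2}(\T)\subset\mathcal{C}^{\k-1}(\T)$ holds, so the a priori bound rules out the blow-up $\lim_{t\uparrow T_{\mathrm{sur}}}\|u\|_{C([0,t],(\mathcal{C}_0^{\k-1})^d)}=\infty$; hence $T_{\mathrm{sur}}\ge T$ a.s., and as $T$ is arbitrary $T_{\mathrm{sur}}=\infty$ a.s., after which the asserted bound holds on all of $[0,T]$.

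The main obstacle is the Burgers term. A plain (unweighted) $L^2$ estimate leaves the cubic $\Ga_{\b\ga}^\a\langle u^\b u^\ga,\partial_x u^\a\rangle$, which is \emph{not} a total spatial derivative unless $\Ga$ itself is fully symmetric, and controlling it by Gagliardo--Nirenberg produces only a superlinear $\|u\|_{L^2}^6$ bound, incompatible with global existence. Passing to the $A^{-1}$-weighted energy and using the trilinear condition is precisely what reinstates the scalar-Burgers cancellation; the remaining technical point — justifying Itô's formula for the paracontrolled solution — is handled by the finite-dimensional approximation together with the localization, and the equation \eqref{eq:6.2} will be treated by the same scheme, the only change being the extra harmless mollifier $*\eta_2^\e=\fa^2(\e D)$ (a bounded, symmetric, nonnegative Fourier multiplier) in front of the nonlinearity.
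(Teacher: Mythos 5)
Your proof is correct and is essentially the paper's argument: the same $A^{-1}$-weighted energy $\sum_\a\|\tau^\a u\|_{L^2(\T)}^2$, the same use of the fully symmetric $\tilde\Ga$ under \eqref{3_eq:1.2-s-hat} (i.e.\ the identity \eqref{3_eq:int vani}) to cancel the cubic term as a total spatial derivative, the same trace-class bound $C_\e=d\sum_k4\pi^2k^2\fa(\e k)^2<\infty$ for the mollified noise, the same Doob/Gronwall step, and the same Galerkin truncation to make It\^o's formula rigorous, followed by $L^2\hookrightarrow\mathcal{C}^{\k-1}$ to rule out the blow-up criterion. One caveat on your closing aside about \eqref{eq:6.2}: the mollifier $*\eta_2^\e$ in front of the nonlinearity is \emph{not} harmless, since $\tilde\Ga_{\a\b\ga}\int_\T\partial_x(\tilde u^\a*\eta_2^\e)\,\tilde u^\b\tilde u^\ga\,dx$ is no longer a total derivative and the cancellation fails for the plain $A^{-1}$-weighted energy; the paper instead works with $\sum_\a\|\tau^\a\fa^{-1}(\e D)\Pi_N\tilde u^N\|_{L^2(\T)}^2$ and only obtains global existence for $\mu_A^\e$-a.e.\ initial data for that equation, not for all of $L^2$.
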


\begin{proof}
We use a similar argument given in Section \ref{subsection:galerkin approximation}. We consider the approximation
\begin{align*}
\begin{cases}
\partial_tu^{N,\alpha}=\tfrac{1}{2}\partial_x^2u^{N,\alpha}+F_N^\alpha(u^N)+\sigma_\beta^\alpha\partial_x\Pi_N\xi^\beta*\eta^\epsilon,\\
u^N(0)=\Pi_Nu(0).
\end{cases}
\end{align*}
where $F_N^\alpha$ is the operator defined by \eqref{galerkin:def of F^N}. We can use the same argument as in the proof of Lemma \ref{galerkin:lemm:U^N does not explode} and show the global existence of $(u^N(t))_{0\le t<\infty}$. Indeed, by applying It\^o's formula to $H^N:=\sum_\a\|\tau^\a u^N\|_{L^2(\T)}^2$ we have
\begin{align*}
dH_t^N\le C_Ndt+dM_t^N,
\end{align*}
where $C_N=d\sum_{|k|\le N}4\pi^2k^2\varphi(\epsilon k)^2$ and $M^N$ is a local martingale with the quadratic variation
\begin{align*}
d[M^N]_t&=4(A^{-1})_{\a\b}\langle u^{N,\a}*\partial_x\eta^\epsilon(t),u^{N,\b}*\partial_x\eta^\epsilon(t)\rangle dt\\
&=4\sum_\a\|\tau^\a u^N*\partial_x\eta^\epsilon(t)\|_{L^2(\T)}^2dt\le 4C'H_t^Ndt,
\end{align*}
where $C'=\|\partial_x\eta^\epsilon\|_{L^1(\T)}^2$. The same argument as Lemma \ref{galerkin:lemm:U^N does not explode} shows that
\begin{align*}
E\left[\sup_{0\le t\le T}\sum_\a\|\tau^\a u^N(t)\|_{L^2(\T)}^2\right]\le (E[\sup_\a\|\tau^\a \Pi_Nu(0)\|_{L^2(\T)}^2]+C_NT+1)e^{16C'T}.
\end{align*}
Since $\tau$ is invertible, a similar estimate holds for $\sum_\a\|u^{N,\a}(t)\|_{L^2(\T)}^2=\|u^N(t)\|_{L^2(\T,\R^d)}^2$. Since $\|\Pi_Nu(0)\|_{L^2(\T,\R^d)}\le\|u(0)\|_{L^2(\T,\R^d)}$ and $C_N\le d\sum_k4\pi^2k^2\varphi(\epsilon k)^2<\infty$, we have the uniform estimate
\begin{align*}
\sup_NE\left[\sup_{0\le t\le T}\|u^N(t)\|_{L^2(\T,\R^d)}^2\right]<\infty.
\end{align*}
It is not difficult to prove that $(u^N)_N$ converges to the solution $u$ of \eqref{eq:6.1} up to time $T_{\text{sur}}$, as an application of the paracontrolled calculus. From this convergence, in a similar way to the proof of Theorem \ref{3_thm:5.7}, we can show that
$$
E[\|u\|_{C([0,T_{\text{sur}}\wedge T),L^2(\T,\R^d))}^2]<\infty
$$
and this implies $T_{\text{sur}}=\infty$ a.s.
\end{proof}

Next we consider the case that $u_0\in(\mathcal{C}_0^{\delta-1})^d$. We fix $T>0$. By Theorem \ref{3_thm:3}, for every $K>0$ there exists a (deterministic) $t=t(u_0,K)\in(0,T]$ such that
$$
u_t^K=
\begin{cases}
u_t,&\$\mathbb{H}^\e\$_t\le K,\\
0,&\text{otherwise}
\end{cases}
$$
satisfies $\|u_t^K\|_{L^2(\T,\R^d)}\lesssim1+\|u_0\|_{(\mathcal{C}_0^{\delta-1})^d}+K^3$, so that $E\|u_t^K\|_{L^2(\T,\R^d)}^2<\infty$. Since the solution of \eqref{eq:6.1} with initial value $u_t^K$ exists globally, we have
$$
P(\text{$u\in C([0,T],(\mathcal{C}_0^{\delta-1})^d)$})\ge P(\$\mathbb{H}^\e\$_t\le K)\ge P(\$\mathbb{H}^\e\$_T\le K).
$$
By letting $K\to\infty$, we see that $u$ exists up to the time $T$ almost surely. Since $T>0$ is arbitrary, we have the global existence of $u$.

\subsubsection{The equation \eqref{eq:6.2}}

For the equation \eqref{eq:6.2}, in a similar way to Sections \ref{subsection:galerkin approximation}-\ref{subsection:global existence for ae initial value}, one can first show the global existence for a.e.-initial values under the stationary measure, and extend it to all initial values by combining it with the strong Feller property.

Precisely, we first use the approximation
\begin{align*}
\begin{cases}
\partial_t\tilde{u}^{N,\alpha}=\tfrac{1}{2}\partial_x^2\tilde{u}^{N,\alpha}+F_N^\alpha(\tilde{u}^N)*\eta_2^\epsilon+\sigma_\beta^\alpha\partial_x\xi^\beta*\eta^\epsilon,\\
\tilde{u}^N(0)=\tilde{u}(0)\in(\mathcal{C}_0^{\delta-1})^d.
\end{cases}
\end{align*}
where $F_N^\alpha$ is the operator defined by \eqref{galerkin:def of F^N}. Without loss of generality, we may assume $\varphi(\epsilon k)\neq0$ for all $k\in\Z$. We can see that this equation has a unique global solution $\tilde{u}^N$ by applying It\^o's formula to
$$
H^N:=\sum_\a\|\tau^\a\varphi^{-1}(\e D)\Pi_N\tilde{u}^N\|_{L^2(\T)}^2\equiv(A^{-1})_{\a\b}\langle \Pi_N\tilde{u}^{N,\a},\varphi^{-2}(\epsilon D)\Pi_N\tilde{u}^{N,\b}\rangle.
$$
Moreover, $\tilde{u}^N$ admits the measure $\mu_A^\epsilon$ as an invariant measure, where $\mu_A^\epsilon$ is the distribution on $\mathcal{D}'(\T,\R^d)$ of $u^\alpha(x)=\sum_k\varphi(\epsilon k)u^{\alpha,k}e^{2\pi ikx}$ with the family $\{u^{\alpha,k}\}$ of centered complex Gaussian variables such that $u^{\alpha,-k}=\overline{u^{\alpha,k}}$ and covariance
$$
E[u^{\a,k}u^{\b,l}]=A^{\a\b}1_{\{k+l=0\}},
$$
recall Lemma \ref{galerkin:lemm:U^N does not explode}. Then the similar argument to the proof of Theorem \ref{3_thm:5.7} shows that the solution $\tilde{u}$ of the equation \eqref{eq:6.2} exists globally for $\mu_A^\epsilon$-a.e. initial values.

\section*{Acknowledgements}
The authors thank C\'edric Bernardin for bringing the paper \cite{EK} into their attention,
Xue-Mei Li for her interest in this paper, especially, the discussion on the trilinear conditions
\eqref{3_eq:1.2-s} and \eqref{3_eq:1.2-s-hat}, and Hao Shen for his significant remarks on the logarithmic renormalization factors. They also thank anonymous referees for their helpful remarks, in particular, suggestions of the condition \eqref{true condition for no log} and Remarks \ref{rem:section3 graph and F} and \ref{rem:section3 graph and G}.

\end{document}